\numberwithin{equation}{section}
\theoremstyle{theorem}
\newtheorem{theorem}{Theorem}[section]
\newtheorem{lemma}[theorem]{Lemma}
\newtheorem{proposition}[theorem]{Proposition}
\newtheorem{corollary}[theorem]{Corollary}
\newtheorem{definition}[theorem]{Definition}
\newtheorem{properties}[theorem]{Properties}
\newtheorem{remark}[theorem]{Remark}
\theoremstyle{theorem}
\newtheorem*{theorem*}{Theorem}
\newtheorem*{lemma*}{Lemma}
\newtheorem*{proposition*}{Proposition}
\newtheorem*{corollary*}{Corollary}
\newtheorem*{definition*}{Definition}
\newtheorem*{properties*}{Properties}
\newtheorem*{remark*}{Remark}
\newtheorem*{problem*}{Problem}
\newtheorem*{question*}{Question}
\newtheorem*{notation*}{Notation}
\declaretheoremstyle[
  headfont=\color{Blue}\normalfont\bfseries,
  bodyfont=\color{Blue}\normalfont
]{colored}
\newcommand{\com}[1]{}
\newcommand{\nono}{\widehat}
\newcommand{\les}{\lesssim}
\newcommand{\ges}{\gtrsim}
\newcommand{\R}{\mathbb{R}}
\let\C\relax 
\newcommand{\C}{\mathbb{C}}
\newcommand{\Z}{\mathbb{Z}}
\newcommand{\T}{\mathbb{T}}
\newcommand{\So}{\mathbb{S}^1}
\newcommand{\Ups}{\Upsilon}
\newcommand{\hel} {
\hskip2.5pt{\vrule height7pt width.5pt depth0pt}
\hskip-.2pt\vbox{\hrule height.5pt width7pt depth0pt}
\, }
\newcommand{\restr}{\hel}
\newcommand{\st}{\stackrel}
\newcommand{\mc}{\mathcal}
\newcommand{\ov}{\overline}
\newcommand{\sm}{\backslash}
\newcommand{\pt}{\partial}
\newcommand{\vhi}{\varphi}
\newcommand{\eps}{\varepsilon}
\newcommand{\dl}{\delta}
\newcommand{\e}{_\varepsilon}
\newcommand{\te}{\theta}
\newcommand{\sub}{\subset}
\newcommand{\oo}{\infty}
\newcommand{\loc}{_{\textnormal{loc}}}
\newcommand{\void}{\varnothing}
\newcommand{\nb}{\nabla}
\newcommand{\h}{\mathcal{H}}
\newcommand{\dw}{\downarrow}
\newcommand{\up}{\uparrow}
\newcommand{\cd}{\cdot}
\newcommand{\ncd}{\nabla\cd}
\newcommand{\we}{\wedge}
\newcommand{\longto}{\longrightarrow}
\newcommand{\lt}{\left}
\newcommand{\rt}{\right}
\newcommand{\bs}{\boldsymbol}
\newcommand{\un}{\bs{1}}
\newcommand{\Om}{\Omega}
\newcommand{\om}{\omega}
\newcommand{\ds}{\displaystyle}
\newcommand{\be}{\begin{equation}}
\newcommand{\ee}{\end{equation}}
\newcommand{\ub}{\bs{u}}
\newcommand{\vb}{\bs{v}}
\newcommand{\uv}{\underline{\bs{v}}}
\newcommand{\zb}{\bs{z}}
\newcommand{\wb}{\bs{w}}
\newcommand{\mb}{\bs{m}}
\newcommand{\xib}{\bs{\xi}}
\newcommand{\Phib}{\Phi}
\newcommand{\Psib}{\Psi}
\newcommand{\mus}{\nono\mu}
\newcommand{\alfs}{{\nono\alpha}}
\newcommand{\Psis}{{\nono\Psi}}
\newcommand{\As}{\nono{\mathcal{A}}}
\newcommand{\Es}{\nono{E}}
\def\XXint#1#2#3{{\setbox0=\hbox{$#1{#2#3}{\int}$} 
      \vcenter{\hbox{$#2#3$}}\kern-.5\wd0}}
\newcommand{\ghost}[1]{}
\DeclareMathOperator{\Id}{Id}
\DeclareMathOperator{\Lip}{Lip}
\DeclareMathOperator{\GL}{GL}
\DeclareMathOperator{\supp}{supp}
\DeclareMathOperator{\dist}{dist}
\DeclareMathOperator{\Span}{span}
\DeclareMathOperator{\curl}{curl}
\DeclareMathOperator{\rot}{{\bf \nono{ curl}}}
\DeclareMathOperator{\rotp}{\rot}
\DeclareMathOperator{\Var}{Var}
\DeclareMathOperator{\ent}{ENT}
\DeclareMathOperator{\entev}{ENT_{ev}}
\title[Unoriented Aviles--Giga functionals]{Compactness and structure of zero-states for unoriented Aviles--Giga functionals}
\date{\today}
\author{M. Goldman}
\address{Universit\'e de Paris and Sorbonne Universit\'e, CNRS, LJLL, F-75005 Paris, France}
\email{michael.goldman@u-paris.fr}
\author{B. Merlet}
\author{M. Pegon}
\address{Univ. Lille, CNRS, UMR 8524, Inria - Laboratoire Paul Painlev\'e, F-59000 Lille}
\email{benoit.merlet@univ-lille.fr}
\email{marc.pegon@univ-lille.fr}
\author{S. Serfaty}
\address{Courant Institute of Mathematical Sciences, 251 Mercer st, New York NY 10012,~USA}
\email{serfaty@cims.nyu.edu}
\begin{document}

\begin{abstract}
Motivated by some  models of pattern formation involving an unoriented director field in the plane, we study a  family of unoriented counterparts to the Aviles--Giga functional.

We introduce a   nonlinear $\curl$ operator for such unoriented vector fields as well as a family of even entropies which we call ``trigonometric entropies". Using these tools we show two main theorems which parallel some results in the literature on the classical Aviles--Giga energy. The first is a compactness result for sequences of configurations with uniformly bounded energies.  The second is a complete characterization of zero-states, that is, the limit configurations when the energies go to 0. These are Lipschitz continuous away from a locally finite set of points, near which they form either a vortex pattern or a  disclination with degree 1/2. The proof is based on a combination of regularity theory together with  techniques coming from the study of the  Ginzburg--Landau energy.\\
Our methods provide alternative proofs in the classical Aviles--Giga context.

 \bigskip

\noindent
\textit{Keywords and phrases.} Aviles--Giga functional, entropies, compactness, rigidity, vortices, disclinations, zero-states, Ginzburg--Landau functional. \bigskip

\noindent
\textit{2020 Mathematics Subject Classification.}  35B36, 35B65, 35J60, 35L65, 49Q20, 49S05, 74G65.\end{abstract}

\maketitle

 \section{Introduction}
 
In this paper we study unoriented variants of the two-dimensional Aviles--Giga functional. 
We first recall  the main features of  the classical (oriented) Aviles--Giga functional, which is nothing else than the Ginzburg--Landau energy restricted to curl-free vector fields, \textit{i.e.} gradients if the domain is simply connected.   
More precisely,  the functional
 is defined on the space of  vector fields $\ub:\Om\sub\R^2\to\R^2$ by 
\[
E\e(\ub):=\begin{cases}\dfrac\eps2 \ds\int_\Om |\nb \ub|^2\,  + \dfrac1{2\eps}\int_\Om (1-|\ub|^2)^2&\text{ if }\curl \ub=0,\\
\qquad+\oo&\text{otherwise}.\end{cases}
\]
 This  model, first  introduced in~\cite{AG96}, as well as some of its variants, appear in the modelling of various phenomena in materials science such as blistering of thin films, liquid crystals configurations 
 and magnetization orientation in ferromagnetic materials.  They  have attracted considerable attention in the mathematical literature 
 over the last twenty years, see \textit{e.g}. ~\cite{AG87,AG96,JK00,AdLM99,DKMO01,JP01, JOP02,RS01,ARS02,ALR02,dLO03,CdL07,Poli07,Ignat12,IM11,IM12,Lorent14} and  
 are  still the subject of active research  as witnessed by more recent articles~\cite{LP18,LLP20,GL20,Marconi20, Marconi21,LP21}.

The main question is to understand the behavior as $\eps\dw0$ of configurations with bounded energy (such as minimizers) and in particular to derive a $\Gamma-$limit of the energy.
The conjecture is that in the limit, the energy concentrates on line singularities corresponding to interfaces (``domain walls") in micromagnetics.
In full generality and despite substantial progress, this question is still open  to this date.\\
The first step of the program, which was carried in \cite{AdLM99,DKMO01}, is to prove strong $L^1$ convergence of sequences of bounded energy.
This shows in particular    that in the limit we obtain curl-free unit-norm vector fields. 
 The proof combines a compensated-compactness argument together with  the fact that  the energy controls  a certain  {\it entropy} production. The latter was already observed by~\cite{JK00} which introduced the first entropies for this problem. 
 This is inspired by the analysis of scalar conservation laws, observing that  
the eikonal equation
\be
\label{Intro_eikonal}
\curl \ub=0,\qquad |\ub|=1
\ee
 can be considered as a one-dimensional scalar conservation law $\partial_1 u_1 + \partial_2 (\sqrt{1-u_1^2})=0$. To be more specific,  an entropy is any mapping
 $\Phib\in C^\oo(\So,\R^2)$ such that 
 \begin{equation}\label{def:entropintro}\mu_\Phib[\ub]:=\ncd[\Phib(\ub)]=0
  \end{equation}
 for any {\it smooth} $\ub$ satisfying~\eqref{Intro_eikonal}. 
 For solutions  $\ub$ of \eqref{Intro_eikonal} obtained as limits of bounded energy configurations $\ub\e$ and  any entropy $\Phib$, the {\it entropy production} $\mu_\Phib[\ub]$ is
 typically not zero but 
 a {\it signed measure}, of mass controlled by $E\e(\ub\e)$ in the sense that 
 \begin{equation}\label{eq:control_entro_oriented}
  |\mu_\Phib[\ub]|(\Omega)\le C \liminf_{\eps\dw 0} E\e(\ub_\eps)
 \end{equation}
 for some positive constant $C$ depending on $\Phib$. In the particular case of the so-called Jin--Kohn entropies, \cite{JK00, AdLM99} proved the sharp inequality \eqref{eq:control_entro_oriented} with $C=1$,
 leading to a characterization of the  $\Gamma-$liminf (see also \cite{RS01,ARS02,IM11,IM12}). For $BV$ vector fields, this was complemented by a corresponding $\Gamma-$limsup construction in \cite{CdL07,Poli07}.
 However, as shown in \cite{AdLM99},  limit configurations are in general  not $BV$. In order to complete the program, it was therefore necessary to investigate further the fine 
 structure of configurations $\ub$ satisfying~\eqref{Intro_eikonal} such that $\mu_\Phib[\ub]$ is a signed measure for any entropy $\Phib$. A first step considered in ~\cite{JOP02}, was to study the case of
configurations such that $E\e(\ub\e)\to 0$ as $\eps\dw 0$, or more precisely, in regions of the domain where the energy does not concentrate and where $\mu_\Phib[\ub]$
vanishes by \eqref{eq:control_entro_oriented}. In this case,  the limiting configurations, called zero-states, must be Lipschitz continuous away from a locally finite singular set $S$. 
The singularities near the points of $S$ must be of  vortex-type (\textit{i.e.} $u(x) =\pm\, x/|x|$ locally, up to an origin-shift). Recently, \cite{LP18,LLP20} proved that the same conclusion can be obtained under
the weaker assumption that the entropy production coming from the  Jin--Kohn entropies vanishes.\\
An important further step was obtained in~\cite{dLO03} (see also \cite{ALR02}), where  it is shown that configurations of finite energy   share some of the characteristic properties of $BV$-mappings.
In particular, it is possible to define  a countably-rectifiable jump set $J_{\ub}$ with  weak traces $\ub^\pm$ on both sides.
Probably the main open question to complete the proof of the $\Gamma-$convergence is to show that the entropy production is concentrated on $J_{\ub}$, see \cite{GL20,Marconi20,Marconi21,LP21} for recent progress on this question. 
\medskip
 
 Motivated by models for the  formation of stripe patterns~\cite{EINP03,EV09,ZANV21}, two of the
 authors started to study in~\cite{MS21} some {\it unoriented variants} of the Aviles--Giga functional,
 \textit{i.e.} variants in which $\ub$ and $-\ub$ are identified.
More precisely, working in the  $SBV$ setting  (\textit{i.e.} $BV$ functions whose differential has no Cantor part, see \cite{AFP}) they consider the energy 
 \be\label{AGeps}
E'\e(\ub\otimes \ub):=\begin{cases}
\dfrac\eps2 \ds\int_\Om |\nb_a \ub|^2\,  + \dfrac1{2\eps}\int_\Om (1-|\ub|^2)^2 
    & \text{if }\begin{cases}&\ub\in SBV(\Om,\R^2),\\ &\ub^++\ub^-=0\text{ on }J_{\ub}\\ &\text{and }\curl_a \ub=0,\end{cases}\\
\qquad+\oo&\text{otherwise,}\end{cases}
\ee
where $\nb_a\ub=(\partial_1 \ub,\partial_2 \ub)$ denotes the absolutely continuous part of the distributional gradient $\nb \ub$, $\curl_a \ub=(\partial_1 u_2)_a-(\partial_2 u_1)_a$, $J_{\ub}$ is the jump set of $\ub$ and $\ub^\pm$ the traces of $\ub$ on $J_{\ub}$. 
This preserves  the original model as much as possible while allowing non-orientable fields. One can check that this functional is unambiguously defined as a function of $\ub \otimes \ub$, since if $\tilde \ub\otimes \tilde \ub=\ub\otimes\ub$ then $\tilde \ub(x)=\pm\ub(x)$ almost 
everywhere in $\Om$ and  $|\nb_a \tilde \ub|=|\nb_a \ub|$.\\

As shown in  \cite{MS21}, the passage from the oriented to the unoriented setting has a dramatic impact on the properties of configurations with moderate energy. 
First, the optimal jump profiles are not always one-dimensional and are thus difficult to precisely characterize 
(in particular they are different from  the two-dimensional ``cross-tie" patterns found in its micromagnetics variants~\cite{ARS02}).
Second and maybe more importantly,  the curl-free constraint  may be  lost in the limit $\varepsilon \dw 0$. This shows in particular that following the
program described above will be very challenging in the unoriented case.\\


 Nevertheless, the aim of this paper is to perform the first two steps and  prove compactness, which happens, maybe surprisingly, despite the possible loss of the curl-free condition, 
 as well as to investigate the 
  structure of zero-states,  thus providing  a parallel to the results of~\cite{DKMO01} and~\cite{JOP02}.  \\
  
 Since we have no control on $\ub$ but only on $\uv=\ub\otimes\ub$, an important preliminary step is to express the energy $E'_{\eps}$ in terms of $\uv$. 
 To this aim, we introduce of good notion of curl for unoriented configurations (see Definition \ref{def_rot} and \eqref{eq:rotp} below) and  denote  it by $\rotp \uv$. It turns out that a convenient requirement is that   
  $\rotp \uv= (\curl \ub) \ub$ for smooth $\ub$ with $\uv=\ub\otimes \ub$.   It is thus a vector-valued and {\it nonlinear} operator. We also need to define the entropy production in terms of $\uv$. For $\Phib$ an even entropy, \textit{i.e.} $\Phib(-z)=\Phib(z)$ for $z\in \So$ and $\uv=\ub\otimes \ub$, we define 
  \[
   \mus_{\Phib}[\uv]:=\ncd[\Phib(\ub)].
  \]
Our first main result is then the following.
\begin{theorem}\label{theo:introcomp}
 Let $\Omega\subset \R^2$ be a domain of finite area and  $\eps_k\dw 0$. If $\uv_k=\ub_{k}\otimes \ub_{k}$ is such that $\sup_k E'_{\eps_k}(\uv_{k})<\infty$, then there exists
 $\uv=\ub\otimes \ub$ with $\ub\in L^1(\Omega,\So)$ 
 such that up to extraction, $\uv_k\to \uv$ in $L^1$. Moreover, for every even entropy $\Phib$, the corresponding entropy production $\mus_{\Phib}[\uv]$ is a signed measure with 
 \[
  |\mus_{\Phib}[\uv]|(\Omega)\le C \liminf_{k\to \infty} E'_{\eps_k}(\uv_{k})
 \]
for some constant $C>0$ depending on $\Phib$.
\end{theorem}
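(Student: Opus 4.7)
The plan is to adapt the compensated-compactness strategy of DeSimone--Kohn--M\"uller--Otto to the unoriented setting, exploiting the fact that evenness of our entropies neutralizes the contribution of the jump set of $\ub_k$.

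\textbf{Step 1 (entropy production bound).} Fix a smooth even entropy $\Phib$. Since $\ub_k\in SBV$ satisfies $\ub_k^++\ub_k^-=0$ on $J_{\ub_k}$ and $\Phib(-z)=\Phib(z)$, one has $\Phib(\ub_k^+)=\Phib(\ub_k^-)$ across the jumps, so $\Phib(\ub_k)$ lies in $W^{1,1}$ with distributional gradient $D\Phib(\ub_k)\nabla_a\ub_k$. As in the classical setting, the defining relation $\nabla\cdot\Phib=0$ on smooth curl-free unit-norm configurations, combined with $\curl_a\ub_k=0$, yields a pointwise estimate of the form
\[
|\nabla\cdot\Phib(\ub_k)|\le C_\Phib\,|1-|\ub_k|^2|\,|\nabla_a\ub_k|,
\]
whence Young's inequality gives
\[
|\mus_\Phib[\uv_k]|(\Om)=\|\nabla\cdot\Phib(\ub_k)\|_{L^1}\le C_\Phib\, E'_{\eps_k}(\uv_k).
\]

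\textbf{Step 2 (a priori bounds).} The potential term provides $\int(1-|\ub_k|^2)^2\le 2\eps_k E'_{\eps_k}(\uv_k)\to 0$, hence $|\ub_k|\to 1$ in $L^4(\Om)$ and in particular $\uv_k=\ub_k\otimes\ub_k$ is bounded in $L^2$. A truncation argument (composing $\Phib$ with a radial cut-off on $\R^2$ that is the identity on $\So$) reduces matters to entropies for which $\Phib(\ub_k)$ is bounded in $L^\infty$, without breaking the estimate of Step 1.

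\textbf{Step 3 (compensated compactness).} Step 1 gives $\mus_\Phib[\uv_k]$ bounded in $\mathcal{M}(\Om)$, while the truncated setting of Step 2 exhibits it as the divergence of a field bounded in $L^\infty$, hence it is bounded in $W^{-1,q}_\loc$ for every $q<\infty$. Murat's compensated-compactness lemma then yields compactness of $\mus_\Phib[\uv_k]$ in $H^{-1}_\loc(\Om)$. Applying the div--curl lemma to pairs of the trigonometric entropies introduced earlier in the paper produces algebraic constraints on the Young measure $\nu_x$ generated by $\uv_k$: by Step 2, $\nu_x$ is supported on the one-dimensional manifold $\{v\otimes v:v\in\So\}\simeq\RPo$, and the trigonometric entropies are designed precisely so that the resulting compensated-compactness identities separate points of $\RPo$ and force $\nu_x$ to be a Dirac mass at almost every $x\in\Om$. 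Equivalently, $\uv_k\to\uv$ strongly in $L^1$ with $\uv=\ub\otimes\ub$ and $|\ub|=1$ a.e.

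\textbf{Step 4 (passage to the limit).} Writing $\Phib(\ub)=\wt\Phib(\uv)$ for a continuous $\wt\Phib$ on the set $\{v\otimes v:v\in\So\}$, the strong convergence $\uv_k\to\uv$ together with uniform continuity on bounded sets yields $\Phib(\ub_k)\to\Phib(\ub)$ in $L^1$, so $\mus_\Phib[\uv_k]\to\mus_\Phib[\uv]$ in the sense of distributions. Lower semicontinuity of the total variation combined with Step 1 gives $|\mus_\Phib[\uv]|(\Om)\le C_\Phib\liminf_k E'_{\eps_k}(\uv_k)$. The main obstacle lies in Step 3: one must verify that the trigonometric entropies produce a sufficiently rich family of compensated-compactness relations to collapse the even Young measure on $\RPo$ to a Dirac mass, and do so compatibly with the merely $L^4$ (rather than $L^\infty$) control on $\ub_k$, which is where the truncation procedure and the careful choice of entropy family really pay off.
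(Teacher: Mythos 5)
The overall strategy you follow — even entropies kill the jump contribution, compensated compactness via Murat's lemma and the div--curl lemma, trigonometric entropies to collapse the Young measure on $\RPo$ — is exactly the paper's route. However, there is a concrete error in Step~1 that invalidates the chain of reasoning as written.

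The claimed pointwise estimate $|\nabla\cdot\Phib(\ub_k)|\le C_\Phib\,|1-|\ub_k|^2|\,|\nabla_a\ub_k|$ is false. The DKMO decomposition gives, pointwise and with $\curl_a\ub=0$,
\[
\nabla\cdot\Phib(\ub)=\Psib(\ub)\cd\nb(1-|\ub|^2),
\]
and $\Psib$ does \emph{not} vanish on $\So$ (it is a genuine ingredient of the decomposition, typically nonzero for nontrivial entropies). Take for instance $\ub(x)=(1+x_1)\,\zb_0$ with $\zb_0\in\So$ fixed: then $\curl\ub=0$, at $x_1=0$ one has $|\ub|=1$ so $|1-|\ub|^2|=0$, and yet $\nb(1-|\ub|^2)(0)=-2\zb_0\ne0$ while $\Psib(\zb_0)$ is generically nonzero, so $\nabla\cdot\Phib(\ub)(0)\ne0$. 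Hence the pointwise bound fails, and the ``whence Young's inequality gives $\|\nabla\cdot\Phib(\ub_k)\|_{L^1}\le C_\Phib E'_{\eps_k}$'' does not follow.

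What is true, and what the paper proves (Proposition~\ref{prop_muPhiH-1}), is that the entropy production is controlled \emph{after subtracting a divergence}. Adding and subtracting $\ncd[(1-|v|)\Psis(v)]$ one gets
\[
\mus_\Phib[v]=-(1-|v|)\,D\Psis(v)Dv+\alfs(v)\cd\rot v+\ncd\bigl[(1-|v|)\Psis(v)\bigr],
\]
and only the \emph{first} term is pointwise $\lesssim (1-|v|)|\nabla v|$ and hence $L^1$-controlled by Young; the last term, upon pairing with $\zeta\in C^1_c$, integrates by parts onto $\nabla\zeta$ and contributes $\lesssim\|1-|v|\|_2\,\|\nb\zeta\|_2$. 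The resulting estimate is therefore of the form $|\int\mus_\Phib[v_k]\zeta|\le C\,E_{\eps_k}\|\zeta\|_\infty+\eta_k\|\nabla\zeta\|_2$ with $\eta_k\to0$, not an $L^1$ bound. This two-term structure is both what Murat's lemma needs (your Step~3 can be run off it, so that part survives) and what ultimately yields $|\mus_\Phib[\uv]|(\Om)\le C\liminf_k E'_{\eps_k}$ in the limit, after combining with the strong convergence of $\uv_k$. Your Step~4's appeal to ``lower semicontinuity of total variation combined with Step~1'' also relies on the spurious $L^1$ bound; the correct argument passes to the limit in the weak estimate and then reads off the total-variation bound with $\zeta$ ranging over $C^1_c$.
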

Let us point out that we actually prove a more general compactness result which allows for a relaxation of the curl-free condition (see Theorem \ref{thm:compmain}).\\
 
Theorem \ref{theo:introcomp} shows in particular that configurations of vanishing energy are {\it zero-states} in the sense that $\uv=\ub\otimes \ub$ for some $\ub$ of unit length with
$\mus_{\Phib}[\uv]=0$ for every even entropy. Our second main result  proves  that in the unoriented setting the structure of zero-states is very similar to that described in~\cite{JOP02} in the classical oriented setting.
The only difference is that point singularities may be vortices but also $1/2$-disclinations (we can also interpret a vortex  as two glued $1/2$-disclinations so that, essentially, these latter are the only type of singularities).
\begin{theorem}[Structure of zero-states]~\\
\label{thm_0nrjw}
Let $\uv=\ub\otimes \ub$ be a zero-state. Then $\rotp \uv=0$ and  there exists a locally finite set $S\sub \Om$ such that:
\begin{enumerate}[(i)] 
\item $\uv$ is locally Lipschitz continuous in $\Om\sm S$,
\item  for $x\in\Om\sm S$, $\uv=\uv(x)$ on the connected component of $[x+\R\ub(x)]\cap [\Om\sm S]$ which contains $x$,
\item for every $B:=B_{r}(x^0)$ such that $2B:=B_{2r}(x^0)\sub \Om$ and $2B\cap S=\{x^0\}$, we can choose $\ub$ such that 
\begin{enumerate}[(a)]
\item either $\ub(x)=(x-x^0)/|x-x^0|$ in $B\sm\{x^0\}$ {(see Figure~\ref{Fig_vortex})},
\item or there exists $ \xib\in \So$ such that {(see Figure~\ref{Fig_disclination})},\smallskip
\begin{itemize}
\item[$\circ$] $\ub(x)=(x-x^0)/|x-x^0|$ in $\ds\lt\{x\in B\sm\{x^0\} : (x-x^0)\cd\xib\ge0\rt\}$,\smallskip
\item[$\circ$] $\uv$ is  Lipschitz continuous in $\ds\lt\{x\in B\sm\{x^0\}: (x-x^0)\cd\xib\le0\rt\}$.
\end{itemize}
\end{enumerate}
\end{enumerate}
\end{theorem}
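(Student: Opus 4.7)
My plan has three main steps: derive $\rotp \uv = 0$, establish the characteristic structure and Lipschitz regularity away from a discrete singular set, and classify the isolated singularities as vortices or half-disclinations. For the first step, I would test the zero-state condition $\mus_{\Phib}[\uv] = 0$ against a suitable family of trigonometric entropies (as introduced earlier in the paper) whose productions encode the nonlinear curl, obtaining $\rotp \uv = 0$ in the sense of distributions. Combined with $|\ub|=1$, this is the unoriented eikonal equation, and it defines a characteristic foliation by straight lines $x + \R\ub(x)$ that is well defined on $\uv$ alone since the direction of $\ub$ is unsigned.

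For the regularity and characteristic structure, I would work on any simply connected region $U \sub \Om$ where $\uv$ is continuous. There, adapting the classical Aviles--Giga zero-state analysis to work with only \emph{even} entropy productions should show that $\uv$ is locally Lipschitz on $U$ and constant along each characteristic within $\Om \sm S$, where $S$ is the set of points at which a continuous local lift of $\uv$ to $\So$ fails to exist or is discontinuous. Ginzburg--Landau-style energy lower bounds would be used to show $S$ is locally finite, establishing items~(i) and~(ii).

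To classify an isolated point $x^0 \in S$ with $2B \cap S = \{x^0\}$, I would compute the topological degree of $\uv$ as a section of the tautological unoriented line bundle over $\pt B$; since this degree lies in $\tfrac12 \Z$ and is quantized by the Ginzburg--Landau argument, only values $\pm 1$ and $\pm\tfrac12$ can occur. Integer degree yields a continuous lift on $B \sm \{x^0\}$, and the characteristic structure combined with $|\ub|=1$ and $\curl \ub = 0$ forces $\ub(x) = (x-x^0)/|x-x^0|$, giving case~(a). For half-integer degree, I would pick $\xib \in \So$ so that a continuous lift exists on the open half-ball $H^+ := \{x \in B : (x-x^0)\cd \xib > 0\}$; on $H^+$ the previous argument produces a vortex $\ub(x) = (x-x^0)/|x-x^0|$, while on $H^- := \{x \in B : (x-x^0)\cd \xib < 0\}$ the characteristics enter as straight segments through $x^0$ along which $\uv$ is constant, and combined with Lipschitz boundary data this propagates Lipschitz continuity throughout $H^-$, giving case~(b).

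The main obstacle is carrying out the regularity and characteristic analysis using only even entropy productions, since the classical Aviles--Giga proofs (e.g.\ \cite{JOP02}) exploit odd Jin--Kohn entropies that are ill-defined on $\uv$. I expect this to require a carefully chosen family of trigonometric even entropies whose joint vanishing is strong enough to force constancy of $\uv$ along characteristics, together with a direct Ginzburg--Landau vortex analysis both to quantize the half-integer degrees and to establish the discreteness of $S$ in the unoriented setting; pinning down the ``seam'' at a genuine disclination as a straight half-line in a single direction $\xib$, rather than a more exotic sector, will likewise hinge on the interplay between the characteristic foliation coming from $H^+$ and the Lipschitz regularity on $H^-$.
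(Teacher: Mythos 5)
Your proposal identifies the right themes --- trigonometric even entropies, a Ginzburg--Landau argument, and the eikonal characteristic structure --- but several of the load-bearing steps are either out of order or left unresolved, and one claim is actually contradicted by the paper.

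First, you propose to obtain $\rotp\uv=0$ ``directly'' by testing the zero-state condition against trigonometric entropies whose productions ``encode the nonlinear curl.'' This cannot work as stated: $\rot v$ is defined through $\nabla v$ and is meaningless for a merely bounded measurable $v$. Moreover, Remark~\ref{rem:noentrot} observes explicitly that there is \emph{no} even entropy $\Phib=\Ups[\lambda]$ whose production equals $\rot v$, because $\lambda''+\lambda=e^{i\theta}$ has no periodic solution. The paper's route is the reverse of yours: one first proves $v\in W^{1,3/2}_{\rm loc}(\Omega)$ via a dyadic summation of the quantities $q_{2m+1}$ built from $\Phib^{\pm(2m+1)}$ (Proposition~\ref{lem_reg}(i)), and only then applies Lemma~\ref{lem_rot=0}, which requires $v\in W^{1,1}_{\rm loc}$ as a hypothesis, to deduce $\rot v=0$.

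Second, the discreteness of the singular set hinges on the \emph{sharp} difference-quotient estimate $\int_\omega|D_h v|^2\le C|h|^2\ln(1/|h|)$. A cruder bound such as $|h|^2\ln^2(1/|h|)$ --- which is what a more naive summation produces, see Remark~\ref{rem:regzero}(2) --- only gives $\GL_\eta(v_\eta;\omega)=o(\ln^2(1/\eta))$, which is not enough to bound the Jacobian and could in principle allow infinitely many vortices. The paper achieves the sharp estimate by a weighted dyadic summation (Step~4 of Proposition~\ref{lem_reg}) that you do not anticipate; ``Ginzburg--Landau-style energy lower bounds'' alone will not get you local finiteness without it.

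Third, your classification of isolated singularities by a half-integer degree in $\tfrac12\Z$ is a plausible heuristic, but the degree alone does not force $\ub(x)=(x-x^0)/|x-x^0|$, nor does it explain why the disclination ``seam'' is exactly a diameter and the bad region exactly a half-ball rather than a more exotic sector. You acknowledge this gap but leave it open. The paper does not argue by degree at all for item~(iii): it exploits the constancy of $v$ along characteristic segments $L(x)$, classifies them into those crossing $2B$ and those radiating from $x^0$, and runs a maximal-convex-sector argument (Steps~3.b--3.f) showing the sector where $v=(u^*)^2$ has opening angle $\alpha\in\{2\pi\}\cup\{\pi\}$, with $\alpha<\pi$ and $\pi<\alpha<2\pi$ ruled out by contradiction. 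The Lipschitz regularity of $v$ in the complementary half-ball is then obtained by the same stripe-distance estimate used off $S$. This geometric mechanism --- not topology --- is what pins down both the precise vortex profile and the straight seam, and it is the part of the proof your proposal is missing.
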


\begin{figure}[ht]
    \begin{minipage}{0.48\textwidth}
        \begin{center}
	\captionsetup{width=.95\textwidth}
        \includegraphics[width=0.9\textwidth]{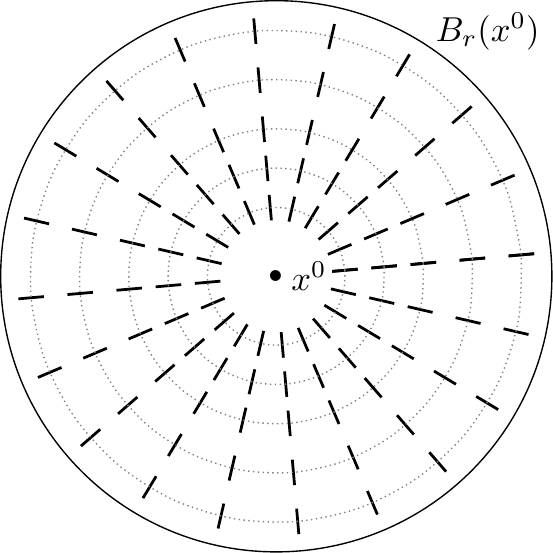}
        \caption{Case (iii)(a) a vortex. \\~}
	\label{Fig_vortex} 
	\end{center}
    \end{minipage}\hfill
    \begin{minipage}{0.48\textwidth}
          \begin{center}
	\captionsetup{width=.95\textwidth}
        \includegraphics[width=0.9\textwidth]{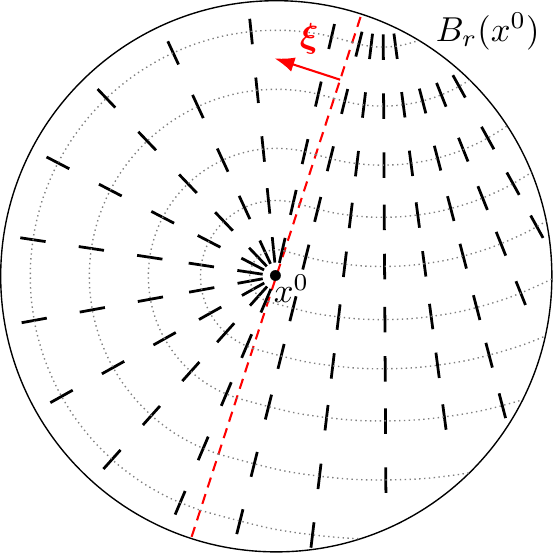}
        \caption{Case (iii)(b) a 1/2-discli-nation.}
	\label{Fig_disclination}
	\end{center}
    \end{minipage}
	\vspace{2pt}
Structure of a zero-state near a singularity $x^0$, as in Theorem~\ref{thm_0nrjw}. Line segments represent the director field $\pm\ub$. Dotted curves are orthogonal to this field.
\end{figure}

Let us stress  that an important point of Theorem \ref{thm_0nrjw} is that zero-states are curl-free. As already alluded to, this is in general not true for arbitrary limit configurations (see  \cite{MS21}). 
In particular, this shows that the creation of curl must come with a cost. \\

 To sum up, our main accomplishments are the following:

\begin{enumerate}[(a)] 
\item 
 We introduce a good notion of curl for unoriented configurations, which we denote $\rot$, and  which extends the usual notion of $\curl$ in the sense that $\rot (\ub\otimes\ub)=(\curl\ub)\ub$. This $\rot$ operator is vector-valued and {\it nonlinear}.

\item We introduce a new family of entropies, which we call the {\it trigonometric entropies} and which have remarkable arithmetic properties.  These are at the core of the proofs of both theorems. 
To the best of our knowledge, albeit being quite natural (in particular they contain the Jin--Kohn entropies, see Remark \ref{rem:JinKohn}),  these entropies have never been used so far. We thus anticipate that they could also prove useful for other related problems.    In order to use entropies in the unoriented setting, these  have to be {\it even} in the usual sense. This is not the case for the standard entropies of~\cite{JK00,DKMO01}. We are able to show that, despite this constraint which reduces the admissible family, controlling just the entropy production for the family of  (even) trigonometric entropies is enough to recover compactness.

\item We prove a parallel structure theorem to~\cite{JOP02}, precisely that zero-states (in particular limits of vanishing energy configurations obtained by the compactness result)  are locally Lipschitz continuous away from a locally finite singular set. Moreover, every singular point corresponds either to a vortex $x/|x|$, or to a disclination with degree $1/2$, see Figures~\ref{Fig_vortex},~\ref{Fig_disclination}. 
 Because of the unoriented situation, the kinetic formulation approach employed in~\cite{JOP02} is no longer available. Instead, the route we follow is to  prove $W^{1,p}$ regularity of zero-states. This is inspired by ideas from~\cite{LP18}. Using results from Ginzburg--Landau theory (see \cite{BBH,SS07,AlicPon}) this allows to identify the location of the vortices (or disclinations).  At this point, thanks to the $W^{1,p}$  regularity we can 
 follow the characteristics  (here its level lines) in a classical sense to conclude on the geometric structure.
\end{enumerate}

We now give an outline of the proofs of Theorem \ref{theo:introcomp} and Theorem \ref{thm_0nrjw} together with more precise definitions of the objects under consideration. 

 \subsection{Main definitions and outline of the proofs}
 \subsubsection*{Complex representation}
 Inspired by~\cite{GMM20} where a related unoriented functional of Ginzburg--Landau type (thus without constraints on the curl) was considered, 
 we find it actually convenient  not to work  with tensor products $\ub \otimes \ub$ but rather in complex number representation. We will use a bold font for elements of $\R^2$ 
 and a regular font for the corresponding elements of $\C$, in the sense that 
 $\ub=(u_1,u_2)$ is identified with $u=u_1+iu_2\in \C$. 
 
 We now observe that  for $u, \tilde u\in \C$ we have 
 \[
\tilde u^2 = u^2 \quad\Longleftrightarrow\quad \tilde u=\pm u \quad\Longleftrightarrow\quad\tilde{\ub}\otimes\tilde{\ub}=\ub\otimes\ub.   
\]
We can thus, indeed identify unoriented vector fields $\uv=\ub\otimes \ub$ with  complex-valued functions $v=u^2$.   In the sequel, we heavily use the multiplicative structure of $\C$.  From now on we also identify all the quantities depending on $v$ and $\uv$ (we write $\rot v$ for $\rot \uv$ and so on). Notice that  if $\uv=\begin{pmatrix} \uv_{11} & \uv_{12}\\ \uv_{12} & \uv_{22}\end{pmatrix}$ and $v=v_1+i v_2$ we have the correspondence
 \begin{equation}\label{v=w}
v_1=\uv_{11}-\uv_{22} \qquad v_2= 2\uv_{12} \qquad \textrm{and} \qquad |v|={\rm Tr\,} \uv.  
 \end{equation}
 In order to define unambiguously a square-root 
$\sigma$ on $\C$ we set  
\begin{equation}\label{defsigma}
\sigma(r e^{i\te})=\sqrt{r}e^{i\te/2}\quad\text{ for }-\pi< \te\le\pi\text{ and }r\ge0.
\end{equation}
So, $\sigma$ is a right inverse on $\C$ for $\Pi:z\in\C\mapsto z^2$ and a left inverse for $\Pi$ on $\{re^{i\te}:r\ge0,\,-\pi/2<\te\le\pi/2\}$. In particular, we have $\sigma(v)^2=v$.\\

 \begin{remark}
 In the study of nematic liquid crystals and in other fields, directors $\{\pm \ub\}$ with $\ub\in\So$ are usually represented as $Q_{\ub}=\ub\otimes\ub -(1/2) \Id$. 
 Taking local averages, the theory is extended to $Q-$tensors: mappings taking values into the space of $2\times2$ symmetric traceless matrices. Here, in the energy~\eqref{AGeps},
 the vector fields $\ub$ are not supposed to have unit length and a representation by $Q$-tensors is not straightforward (the trace of $\ub\otimes\ub$ is not prescribed).  
Moreover, it is not clear how to properly define a curl operator for general $Q-$tensors (we could extend formula \eqref{eq:rotp} below but many other choices are possible). 
 \end{remark}
 
 \subsubsection*{Unoriented curl and energies}
 We may now define our  unoriented curl operator, which has the particularity of being  nonlinear and vector-valued. It is defined so that if $v=u^2$ with $u$ smooth, then $\rot v= (\curl \ub) \ub$.

\begin{definition}\label{def_rot}
For $v\in W^{1,1}\loc(\Om,\C)$, $\rot v$ denotes the measurable vector field $\Om\to\R^2$ which vanishes on the set $\{x\in\Om:v(x)=0\}$ and is defined by $\rot v:=(\rho_1,\rho_2)$ elsewhere with
\begin{align*}
\rho_1&:=\dfrac14\lt( \dfrac{v_1}{|v|} + 1\rt)(\pt_1v_2 - \pt_2 v_1)  -\dfrac14\dfrac{v_2}{|v|} (\pt_1v_1 + \pt_2 v_2),\\
\rho_2&:=\dfrac14\dfrac{v_2}{|v|} (\pt_1v_2 - \pt_2 v_1)  +\dfrac14 \lt( \dfrac{v_1}{|v|} -1\rt) (\pt_1v_1 + \pt_2 v_2).
\end{align*}
This can be rewritten in compact form as
\begin{equation}\label{def:rotcomp}
\rot v =\dfrac14 \lt(\begin{array}{cc}
\dfrac{v_1}{|v|} + 1\ & -\dfrac{v_2}{|v|}\ \\ \\ \!\dfrac{v_2}{|v|} & \dfrac{v_1}{|v|} - 1
\end{array}\rt)
\binom{\nb^\perp\cd\vb}{\ncd\vb},
\end{equation}
where  $\nb^\perp:=(-\pt_2,\pt_1)$. Notice that $\rot v\in L^1\loc(\Om,\R^2)$ defines a distribution.
\end{definition}
\begin{remark}
 Using for instance \eqref{v=w} it is not hard to see that 
 \begin{equation}\label{eq:rotp}
  \rotp \uv=\frac{1}{2 {\rm Tr\,}(\uv)} \uv\lt(2 (\nabla\cdot \uv)^{\perp} +\nabla^\perp( {\rm Tr\,}(\uv))\rt)
\end{equation}
where $\ub^\perp:=(-u_2,u_1)$ and 
\[
 \nabla \cdot \uv=(\nabla \cdot \uv_1,\nabla \cdot \uv_2)
\]
is the column-wise divergence of $\uv$.
\end{remark}

\begin{remark}
We cannot use only one component of $\rot(u^2)=(\rho_1,\rho_2)$ to retrieve $\curl u$ or even the condition $\curl u=0$. Indeed, if $u$ takes only real values then $v=u^2$ satisfies $v_1-|v|=v_2= 0$, hence $\rho_2=0$ whereas $u$ is curl-free only if $\pt_2u= 0$. Symmetrically, if $u$ takes values in $i\R$ then  $v_1+|v|=v_2= 0$ and $\rho_1=0$ while $\curl u$ vanishes  only if $\pt_1u= 0$.
\end{remark}
We can now use the $\rot $ operator to encode the curl-free constraint  and replace the functional $E'\e$ defined in~\eqref{AGeps} by 
\begin{equation}\label{AGepsprime}
E\e''(v):=
\begin{cases}
\dfrac\eps2 \ds\int_\Om \dfrac{|\nb v|^2}{4|v|}  + \dfrac1{2\eps}\int_\Om (1-|v|)^2
  &\text{if }v\in W^{1,1}(\Om,\C)\text{ and }\rot v=0,\smallskip\\
\qquad+\oo
  &\text{otherwise.}
\end{cases}
\end{equation}
%
By convention $|\nb v|^2/(4|v|)=0$ wherever $v=0$ (recall that $\nb v=0$ a.e. on $\{v=0\}$). We will show in Proposition \ref{prop_equivalence_modeles}  that the functionals $E'\e$ and $E''\e$ are indeed equivalent. \\

Drawing analogy with both the Aviles--Giga functional and micromagnetics models \cite{DKMOS01,DKMO02,DKMO05,RS01,ARS02}, we will actually consider  a larger class of  energies where  the curl-free constraint is relaxed. We will consider two possible relaxations. The first one is meant to 
allow dislocations at small scales while penalizing their energy, the other contains an analogue of the stray-field energy in micromagnetics.\smallskip\\
We assume here that $g $ and $W$ are measurable functions from $\C$ to $[0, +\infty]$, that $\lambda^0\e,\lambda^1\e\in [0,+\infty]$ and that there exists a constant $\kappa>0$ such that  
\begin{equation}
\label{conditionsgw}
\lt\{ \begin{array}{rll}g&\ge \kappa &\text{in some neighborhood of }\So,\smallskip\\
W(v) &\ge \kappa \min\lt( (1-|v|)^2,|1-|v|| \rt)\quad&\text{for }v\in\C,\smallskip\\
\max(\lambda^0\e,\lambda^1\e)&\ge \kappa&\text{for }\eps>0.
\end{array}\rt.\end{equation}
We then define  
\begin{align}
\label{Eseps0}
\Es\e(v)&:=\eps \ds\int_\Om g\lt(v\rt) |\nb v|^2\,  + \dfrac1{\eps}\int_\Om  W(v)\,+ \lambda\e^0\int_\Om |\rot v|+\dfrac{\lambda\e^1}{\eps}\lt\|\rot v\rt\|_{H^{-1}(\Om)}^2.
\end{align}
Notice that $E\e''$ corresponds to the choice $g(v)=1/(8|v|)$, $W(v)=(1-|v|)^2/2$ and $\max(\lambda^0\e,\lambda^1\e)=+\infty$.

\subsubsection*{Unoriented and trigonometric entropies}
We define entropies as mappings $\Phib\in \C^{\infty}(\So, \C^2)$ such that the real and imaginary part of $\Phib$ are usual entropies \textit{i.e.} they satisfy \eqref{def:entropintro}. We fix an arbitrary function $\chi_0\in C^{\infty}_c([0,+\infty),[0,1])$ with $\chi_0(1)=1$ and $\supp \chi_0\subset (1/2,2)$. With a slight abuse of notation we identify any entropy $\Phib$ with its extension to $\C$ by
\[
 \Phib(z)=\chi_0(|z|)\Phib\lt(\frac{z}{|z|}\rt).
\]
For any $v\in L^1(\Omega,\C)$, and any even entropy $\Phi$, we define the {\it entropy production}
\begin{equation}\label{def:entroprod}
 \mus_{\Phib}[v]:=\nabla\cdot\lt[\Phi(\sigma(v))\rt].
\end{equation}

In order to motivate the definition of the trigonometric entropies, let us recall from \cite{DKMO01} that setting $\T:=\R/(2\pi \Z)$, we can associate to any $\lambda\in C^\oo(\T,\C)$  the entropy $\Ups[\lambda]:=\Phib\in C^\oo(\So,\C^2)$ defined by 
\begin{equation}\label{def:Gamma}
\Phib(e^{i\te}):= \lambda(\te) \binom{-\sin\te}{\cos\te} -\lambda'(\te) \binom{\,\cos\te\,}{\,\sin\te\,}.
\end{equation}
 Moreover, if $\lambda$ is $\pi-$antiperiodic then $\Phib$ is even. 

 \begin{definition}\label{def_Phi_n}
For $n\in\Z$, we set  $\Phib^n:=\Ups[2i e_n]$, where $e_n$ is the trigonometric monomial $\te\in\R\mapsto e^{in\te}\in\C$. We call these functions  \emph{trigonometric }entropies. 
More explicitly, for $n\in \Z$ and $z\in \C$,
\[\Phib^n(z)=(n-1)z^{n+1} \binom1{-i} + (n+1) z^{n-1} \binom1i. \]
 \end{definition}
Notice that  if $n$ is odd then $\Phib^n$ is even. The main properties of these entropies that we will use are the very favorable algebraic expressions of 
\[
 \Phib^n(z)\wedge \Phib^{-n}(z') \qquad \textrm{and} \qquad \lt[\Phib^n(z)-\Phib^n(z')\rt]\wedge \lt[\Phib^{-n}(z)-\Phib^{-n}(z')\rt],
\]
from Lemma \ref{lem_Phinwedges}.

\begin{remark}\label{rem:JinKohn}
 Defining the Jin--Kohn entropies  on $\So$ by
$\Sigma_1(z):=(z_2(1- (2/3)z_2^2), z_1(1-(2/3)z_1^2))$ and $\Sigma_2(z):=(2/3)(z_1^3,-z_2^3)$ (see
\cite{JK00}) we can easily check that $\Phib^{\pm 2} = 6\Sigma_2 \pm 6 i\, \Sigma_1$.
\end{remark}

\subsubsection*{Sketch of proof of Theorem \ref{theo:introcomp}}
The proof of Theorem \ref{theo:introcomp} (for $\Es\e$) follows closely the  strategy of the proof of its oriented counterpart in \cite{DKMO01}.
The first step (see Proposition \ref{prop_muPhiH-1}) is to prove that the energy controls the entropy production in the sense that for every even entropy $\Phib$, 
there exists a constant $C\ge 0$ depending on $\Phib$ such that for every $\zeta\in C^1_c(\Omega,\C)$,
\begin{equation}\label{eq:entropyprodintro}
\lt|\int_\Omega \mus_{\Phib}[v] \zeta\rt|\le C\lt(\Es\e(v)\|\zeta\|_\infty + \eps^{1/2}\Es\e^{1/2}(v)\|\nabla \zeta\|_2\rt).
\end{equation}
Thanks to  \cite{Murat78,Murat81,Tartar79} this yields that for sequences $(v_k)$ of bounded energy, $( \mus_{\Phib}[v_k])$ is compact in $H^{-1}\loc(\Omega)$. Using the div-curl lemma of Murat and Tartar, see \cite{Murat78,Tartar79}, we conclude that for every pair of even entropies $\Phi^a$, $\Phi^b$ we have as weak limits in $L^2$,
\begin{equation}\label{divcurlintro}
 \lim_{k\up \infty} \lt[\Phib^a(\sigma(v_k))\wedge \Phib^b(\sigma(v_k))\rt]=\lt[ \lim_{k\up \infty} \Phib^a(\sigma(v_k))\rt]\wedge\lt[\lim_{k\up \infty} \Phib^b(\sigma(v_k))\rt].
\end{equation}
If we now consider the Young measure  $\nu_x\otimes\mathcal{L}^2$ generated by $(v_k)$ ($\mathcal{L}^2$ denotes the Lebesgue measure) this translates into 
\[
 \int_{\So} \Phib^a(\sigma(z))\wedge \Phib^b(\sigma(z)) d\nu_x(z)=\lt[\int_{\So} \Phib^a(\sigma(z))d\nu_x(z)\rt]\wedge \lt[\int_{\So}\Phib^b(\sigma(z)) d\nu_x(z)\rt].
\]
The question is then to understand if the class of entropies we have at our disposal is rich enough to conclude that $\nu_x$ must be a Dirac mass (which then classically implies strong convergence of $(v_k)$, see \cite{MullerLecture} for example). It is at this point that our proof departs from the one of \cite{DKMO01}. Indeed, to reach the conclusion in the oriented case~\cite{DKMO01} used (after an approximation argument) the following entropies for $\xi\in\So$,
\begin{equation}\label{Phixi}
\Phib^\xi(z):=\begin{cases}\xib&\text{when }\xib\cd\zb^\perp>0,\\0&\text{when }\xib\cd\zb^\perp\le0.\end{cases}
\end{equation}
Alternatively, in \cite{AdLM99} the Jin--Kohn entropies of  \cite{JK00} are used for a similar conclusion. None of these entropies are even and therefore available in the unoriented setting.
We use instead the trigonometric entropies  $\Phib^n$ for odd $n$ to prove that the first Fourier coefficient of $\nu_x$ is of modulus equal to $1$. Since $\nu_x$ is a
probability density on $\So$, this implies that it is a Dirac mass (see Lemma \ref{lem_dirac}).

\begin{remark}~
\begin{enumerate}[(1)]
\item
 Notice that we can also obtain an alternative proof to the compactness results of \cite{DKMO01,AdLM99} in the oriented case by using instead the trigonometric entropies $\Phib^n$ with $n$ even.
\item 
On the contrary, if we replace $v=u^2$ by $v=u^q$ for some $q\ge 3$ in the model \textit{i.e.} we identify the elements of $\{e^{2i k\pi/q} u : 0\le k\le q-1\}$,  then the space of entropies with the respective symmetry reduces to the space of constant mappings $\So\to\C^2$.
In this case it turns out that the compactness result analogous to Theorem~\ref{thm:compmain} does not hold  and the corresponding $\Gamma-$limit is trivial (see \cite{MS21}). 
%
\end{enumerate}
\end{remark}

\subsubsection*{Sketch of proof of Theorem \ref{thm_0nrjw}}
Our proof of Theorem \ref{thm_0nrjw} is totally different from the proof in \cite{JOP02} for the oriented case.
Indeed, that proof is almost entirely based on the use of the entropies $\Phib^\xi$ defined in \eqref{Phixi} in order to follow in a weak sense the characteristics of the eikonal equation (which are lines).
In the unoriented case, we   build instead on a method of Lorent and Peng~\cite[Theorem~4]{LP18} (see also~\cite[Lemma~7]{LLP20}) which in turn is inspired by the earlier work \cite{Sverak} of \v{S}ver\'{a}k on differential inclusions.
We apply it  in Proposition \ref{lem_reg} with the trigonometric entropies to show that
if $v$ is a zero-state then $v\in W^{1, 3/2}_{\rm loc}(\Omega)$ (actually $W^{1,p}_{\rm loc}$ for every $p<2$, 
see Remark \ref{rem:regzero}) and for every open set $\omega\subset \sub \Omega$ and $|h|\ll1$,
\begin{equation}\label{eq:reg_estim_intro}
 \int_{\omega} |v(x+h)-v(x)|^2 dx\le C |h|^2 \ln(1/ |h|).
\end{equation}
From $v\in W^{1, 3/2}_{\rm loc}(\Omega)$ and the vanishing of any non-trivial even trigonometric entropy we recover the condition $\rot v=0$. Before commenting on the implications of \eqref{eq:reg_estim_intro}, let us give a sketch of the proof. For the sake of simplicity we focus on the first non-trivial even trigonometric entropy $\Phib^3$ and derive the weaker Besov estimate:
\begin{equation}\label{Besovintro}
  \int_{\Omega} |D_h v|^4 \zeta^2\le C |h|^{\frac{4}{3}},
\end{equation}
that is $v\in B^{1/3}_{4,\infty,{\rm loc}}(\Omega)$. This is the exact analog of \cite[Theorem~4]{LP18}
replacing the Jin--Kohn entropies by the trigonometric entropies $\Phib^{\pm 3}$.\\
Letting $D_h f(x):=f(x+h)-f(x)$,
the main point is that as a consequence of Lemma~\ref{lem_Phinwedges},
\begin{equation}\label{eq:Dhv4}
 |D_h v|^4\le \frac{C}{2i} \lt(D_h[\Phib^3(\sigma(v))]\wedge D_h[\Phib^{-3}(\sigma(v))]\rt).
\end{equation}
We then use once again the div-curl structure of the right-hand side (recall \eqref{divcurlintro}). Indeed,  we notice  that $\Phib^3(\sigma(v))$ is divergence free (since $\mus_{\Phib^3}[v]=0$) to find a Lipschitz
function $F$ with $\Phib^3(\sigma(v))=\nabla^\perp F$.
For every smooth test function $\zeta$ we then have
\begin{align*}
 \int_{\Omega} |D_h v|^4 \zeta^2&\le C\lt| \int_\Omega \lt[\nabla^\perp D_h F\rt]\wedge \lt[D_h[\Phib^{-3}(\sigma(v))]\rt] \zeta^2\rt|\\
 &=C\lt| \int_\Omega \lt[\nabla D_h F\rt]\cdot \lt[D_h[\Phib^{-3}(\sigma(v))]\rt]\zeta^2\rt|\\
 &\le C \int_\Omega  |D_h F|  \lt|D_h[\Phib^{-3}(\sigma(v))]\rt|  |\zeta| |\nabla \zeta|,
\end{align*}
where we used integration by parts and $\nabla \cdot \lt[D_h[\Phib^{-3}(\sigma(v))]\rt]=0$. Since both $F$ and $\Phib^{-3}\circ \sigma$ are Lipschitz continuous, we find by H\"older inequality,
\[
 \int_{\Omega} |D_h v|^4 \zeta^2\le C |h| \lt(\int_{\Omega} |D_h v|^4 \zeta^2\rt)^{\frac{1}{4}}.
\]
After simplification this yields \eqref{Besovintro}.\\

In order to improve from the Besov regularity to the stronger $W^{1,p}$ estimate and to~\eqref{eq:reg_estim_intro}, we need to  implement a refined version of this argument involving {\it all} the even trigonometric entropies. 
\\

Defining the Ginzburg--Landau  energy  in the open set $\omega$ (see \cite{BBH, SS07}) as 
\[
 \GL_\eta(u;\om):=\dfrac{1}{2}\int_{\omega} |\nabla u|^2 +\frac{1}{4\eta^2}\int_{\omega}(1-|u|^2)^2
\]
we deduce in Lemma \ref{lem_GL} from \eqref{eq:reg_estim_intro} that for every mollification $v_\eta$ of $v$ we have
\begin{equation}\label{GLsmall}
 \GL_\eta(v_\eta;\om)\le C \ln(1/\eta).
\end{equation}
From the theory of Ginzburg--Landau vortices and in particular \cite{AlicPon} we find that $v$ has degree zero outside a locally finite set $S$.
In particular, we can use the theory of lifting for Sobolev maps \cite{BMP,Demengel} to find locally outside of $S$ a curl-free square-root of $v$ with Sobolev regularity.
At this point we can follow the characteristics in a classical sense to conclude on the geometrical structure.

\begin{remark}
 As for Theorem \ref{theo:introcomp}, our proof of Theorem \ref{thm_0nrjw} can be used to give a new proof in the classical (oriented) Aviles--Giga setting.
\end{remark}

\begin{remark}\label{rem:remregintro}
 Let us point out that in order to conclude that the number of singularities is locally finite, it is crucial to obtain a sharp estimate in \eqref{eq:reg_estim_intro}. 
 Indeed, even a logarithmically failing estimate would lead to a potentially infinite number of vortices. In particular, if $(v_k)$ is such that $v_k\to v$ and  $\Es_{\eps_k}(v_k)\to 0$, \eqref{GLsmall} should be compared with the much weaker bound 
 \[
  \GL_{\eps_k}(\om;v_k)=o(1/\eps_k).
 \]
 This work is the first where the connection between Ginzburg-Landau vortices and zero-states of the Aviles-Giga energy is made \textit{a priori} rather than \textit{a posteriori}.
\end{remark}

\subsection{Organization of the paper}
In Section~\ref{Seccurl} we show the equivalence between considering the variable $u$ and $v=u^2$ both for the curl-free constraint and for the energy.
In Section~\ref{Sentrop_prod}, we collect all the properties of entropies and entropy productions that we need.  
In Section~\ref{SPhin}, we study the family of trigonometric entropies and establish some of their properties.  
Compactness issues are dealt with in Section~\ref{Scomp}, in particular the general compactness result, Theorem~\ref{thm_comp}, is established. The structure of zero-states is obtained in Section~\ref{Sstructure_zero_states}. We indicate throughout the article how our methods apply to the classical Aviles--Giga functional.


\subsection{Conventions and notation}
We identify the target spaces $\R^2$  with $\C$ and recall the following conventions.
\begin{enumerate}[(i)]
\item 
$\So:=\{z\in \C:|z|=1\}$.

\item For $u=u_1+iu_2\in\C$  with $u_1,u_2\in\R$, we write $\ub=(u_1,u_2)\in\R^2$. This applies to functions. $u:\Om\to\C$ corresponds to $\ub:\Om\to\R^2$ with $\ub(x)=(u_1(x),u_2(x))$ and $u(x)=u_1(x)+iu_2(x)$.\\
However, we do not use bold fonts for the elements of the domain $\Om\sub\R^2$: we write $x=(x_1,x_2)\in\Om$.

\item We recall that $\sigma$ is defined by $\sigma(r e^{i\te})=\sqrt{r}e^{i\te/2}$  for $-\pi< \te\le\pi$ and $r\ge0$.
\end{enumerate}
Throughout the paper, $\Om$ is an open domain of $\R^2$ and  $\om\sub\sub\Om$ means that $\om$ is an open set with $\ov\om\sub\Om$ compact.\medskip


\noindent
For $\zb\in\C^2$, $\zb^\perp:=(-z_2,z_1)$. We also denote $\nb^\perp=(-\pt_2,\pt_1)$.\medskip

\noindent
For $\zb^{a}=\binom{z_1^a}{z_2^a},\zb^{b}=\binom{z_1^b}{z_2^b}\in\C^2$, we denote by $\zb^a\we\zb^b$ the wedge product defined by 
\[
\zb^{a}\we\zb^{b}=\det(\zb^{a},\zb^{b}) = (\zb^{a})^\perp\cd\zb^{b}= z^{a}_1z^{b}_2-z^{a}_2z^{b}_1.
\]

\noindent
For $\phi:\om\sub\R^2\to \C^2$, $\ncd \phi:=\pt_1\phi_1+\pt_2\phi_2$.\medskip

\noindent
For $u:\om\sub\R^2\to \C$, $\curl u:=\curl \ub=\nb\we\ub=\pt_1 u_2-\pt_2 u_1=-\ncd \ub^\perp=\nb^\perp\cd\ub$. \medskip


\noindent 
$|z|$ denotes the modulus of the complex number $z$, and $|x|$ the Euclidean norm of $x\in\R^2$. For complex vectors $\wb\in \C^2$, we write $\|\wb\|_{\ell^2(\C^2)}$ for the Euclidean norm $\sqrt{|w_1|^2+|w_2|^2}$. The associated bilinear dot products are denoted by ``\,$\cdot$\,''. \medskip


\noindent 
For $u\in BV\loc(\Om)$ (see~\cite{AFP}), we denote by $D_au$ the absolutely continuous part of its differential and by $D_cu$ its Cantor part. Similarly, $\nb_a u$ and $\curl_a u$ denote the absolutely continuous part of $\nb u$ and $\curl u$. The jump set of $u$ is denoted by $J_u$ and $\nu_u$ is a unit normal vector to $J_u$. The traces of $u$ at some point $x\in J_u$ are defined with the convention $u^\pm:=\lim_{t\dw0}u(x\pm t\nu_u(x))$.
\medskip

\noindent
We denote by $SBV^2(\Om)$, the subspace of elements $u\in SBV(\Om)$ such that $\nb_a u\in L^2(\Om)$. Notice that unlike~\cite{AFP} we do not impose the condition $\mathcal{H}^1(J_u)<+\infty$.



\section{Equivalence of the models}\label{Seccurl} 
We first show that $\rot (u^2)$ allows to recover $\curl_a u$: 
\begin{lemma}
\label{lem_curlrot}
Let  $u\in SBV^2\loc(\Om,\C)$, such that $(u^+)^2=(u^-)^2$ $\h^1$-almost everywhere on $J_u$, then $u^2\in W^{1,1}\loc(\Om,\C)$ and 
\[
\rot (u^2)= (\curl_a u)\ub\qquad\text{ in }L^1\loc(\Om).
\]

\noindent
In particular (with the convention $\ub/|u|^2=0$ wherever $\ub$ vanishes), there holds
\[
\curl_a u = \rot (u^2)\cd\dfrac{\!\!\ub\,}{|u|^2}\qquad\text{ in }L^1\loc(\Om).
\]
\end{lemma}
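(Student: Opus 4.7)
The plan is to first establish the Sobolev regularity $u^2\in W^{1,1}\loc(\Om,\C)$ via the $SBV$ chain rule, then perform a direct algebraic computation to identify $\rot(u^2)$ with $(\curl_a u)\ub$, and finally deduce the second identity by contracting with $\ub/|u|^2$.

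For the regularity step, I would apply the chain rule for vector-valued $SBV$ maps to the smooth map $z\mapsto z^2$. Setting $v:=u^2$, this yields $v\in SBV\loc(\Om,\C)$ with absolutely continuous part $\nb_a v = 2u\,\nb_a u$ (in the sense of complex multiplication) and jump part of the form $((u^+)^2-(u^-)^2)\,\nu_u\,\h^1\restr J_u$. The assumption $(u^+)^2=(u^-)^2$ on $J_u$ kills the jump part, and since $SBV$ functions have no Cantor part, $Dv$ reduces to its absolutely continuous piece, whence $v\in W^{1,1}\loc(\Om,\C)$.

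The heart of the proof is then an explicit computation. Writing $v_1=u_1^2-u_2^2$, $v_2=2u_1u_2$ and $|v|=|u|^2$, a short calculation gives
\[
\pt_1v_2-\pt_2v_1 = 2u_1\,\curl_a u + 2u_2\,\ncd\ub,\qquad \pt_1v_1+\pt_2v_2 = 2u_1\,\ncd\ub - 2u_2\,\curl_a u.
\]
Inserting these into Definition~\ref{def_rot} together with the pointwise identities $v_1/|v|+1=2u_1^2/|u|^2$, $v_1/|v|-1=-2u_2^2/|u|^2$ and $v_2/|v|=2u_1u_2/|u|^2$, one checks that the terms proportional to $\ncd\ub$ cancel out, leaving $\rho_1 = u_1\,\curl_a u$ and $\rho_2 = u_2\,\curl_a u$ a.e.\ on the set $\{\ub\neq 0\}$.

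To upgrade this pointwise equality to an identity in $L^1\loc(\Om)$, it remains to observe that on $\{\ub=0\}$ the left-hand side vanishes by the convention in Definition~\ref{def_rot}, while the right-hand side vanishes because $\nb_a u=0$ a.e.\ on the zero set of $u$ (a standard $W^{1,1}$ fact applied to each real component of $u$). The second formula then follows by dotting the first with $\ub/|u|^2$: this yields $(\curl_a u)|\ub|^2/|u|^2=\curl_a u$ on $\{\ub\neq 0\}$, and $0=0$ elsewhere under the stated convention. I expect the algebraic cancellation in the third paragraph to be the only substantive step, the rest being routine $SBV$ bookkeeping.
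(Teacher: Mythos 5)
Your proof is correct and follows essentially the same route as the paper: $SBV$ chain rule for $z\mapsto z^2$, pointwise algebraic identification on $\{u\ne 0\}$, and a separate treatment of the zero set. Your reorganization of the algebra through $\partial_1 v_2-\partial_2 v_1=2u_1\curl_a u+2u_2\,\ncd\ub$ and $\partial_1 v_1+\partial_2 v_2=2u_1\,\ncd\ub-2u_2\curl_a u$ is a slightly cleaner way to see the cancellation of the $\ncd\ub$ terms than the paper's direct expansion, but it is the same computation. One small imprecision: on $\{u=0\}$ you invoke ``a standard $W^{1,1}$ fact,'' but $u$ is only $SBV$, so what is really needed is the analogous statement that $D_a u=0$ a.e.\ on $\{u=0\}$ for $BV$ functions; this is true but requires its own (short) justification, which the paper supplies via a truncation argument.
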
\smallskip
\begin{proof}~
Let  $u\in SBV^2\loc(\Om,\C)$ such that $(u^+)^2=(u^-)^2$ on its jump set $J_u$. By the chain rule for $BV$-functions (see~\cite[Theorem~3.96]{AFP}), for every $f\in C^1(\C)$ with $\|Df(z)\|_\oo\le C(1+|z|)$ for some $C\ge0$, $f\circ u\in SBV\loc(\Om)$ and
\[
D[f\circ u]=Df(u) D_au +[f(u^+)-f(u^-)]\otimes \nu_u\h^1\restr J_u.
\]
Setting $v:=u^2$ and applying this formula with $f(z)=z^2$, we get 
\[
Dv=2u D_au +[(u^+)^2-(u^-)^2]\otimes  \nu_u\h^1\restr J_u=2u D_au.
\]
In particular,  $v\in W^{1,1}\loc(\Om)$.
We first consider the domain, $\Om':=\{x\in\Om:u(x)\ne0\}$. With the abuse of notation  $(\pt_1 u, \pt_2u):=\nb_a u$, we have,
\be
\label{proof_lem_curlrot_1}
\curl_a u = \pt_1 u_2-\pt_2 u_1.
\ee
Now, we express the components of $v$  in terms of the components of $u$ and use the chain rule to compute their partial derivatives. Almost everywhere in $\Om'$, there hold, 
\[
v_1=u_1^2-u_2^2,\qquad v_2=2u_1u_2,\qquad |v|=u_1^2 + u_2^2,
\]
which yield,
\begin{align*}
\pt_1 v_2 - \pt_2 v_1&=2\lt[u_1\pt_1 u_2 + u_2\pt_1 u_1 - u_1\pt_2 u_1 + u_2\pt_2 u_2 \rt],\\
\pt_1 v_1 + \pt_2 v_2&=2\lt[u_1\pt_1 u_1 - u_2\pt_1 u_2 + u_1\pt_2 u_2 + u_2\pt_2 u_1 \rt].
\end{align*}
Using these identities in the formula of Definition~\ref{def_rot} and simplifying, we get, almost everywhere in $\Om'$ and with $(\rho_1,\rho_2):=\rot v$,
\begin{align*}
|v|\rho_1&=u_1^2\lt[u_1\pt_1 u_2 + u_2\pt_1 u_1 - u_1\pt_2 u_1 + u_2\pt_2 u_2 \rt]\\
&\qquad\qquad -u_1u_2\lt[u_1\pt_1 u_1 - u_2\pt_1 u_2 + u_1\pt_2 u_2 + u_2\pt_2 u_1 \rt]\\
&=|v|u_1(\pt_1u_2 -\pt_2u_1)\, \st{\eqref{proof_lem_curlrot_1}}=\, |v|u_1\curl_a u,\\ \\
|v|\rho_2&=u_1u_2\lt[u_1\pt_1 u_2 + u_2\pt_1 u_1 - u_1\pt_2 u_1 + u_2\pt_2 u_2 \rt]\\
&\qquad\qquad -u_2^2\lt[u_1\pt_1 u_1 - u_2\pt_1 u_2 + u_1\pt_2 u_2 + u_2\pt_2 u_1 \rt]\\
&=|v|u_2(\pt_1u_2 -\pt_2u_1)\, \st{\eqref{proof_lem_curlrot_1}}=\, |v|u_2\curl_a u.
\end{align*}
We obtain the desired identity  $\rot v = (\curl_a u)\ub$ almost everywhere in $\Om'$. \medskip

In the remaining region $\Om'':=\{x\in\Om:u(x)=0\}$, we have $\rot v=0$ by definition and $D_au=0$, hence 
\be\label{gradu=0onu=0}
\curl_a u=0\text{ almost everywhere in }\Om''
\ee 
(see below). Therefore, the identity $\rot v = (\curl_a \ub)\ub$ holds true almost everywhere in $\Om$ and since both sides of the identity define locally integrable functions, the identity holds in $L^1\loc(\Om)$. This ends the proof of the lemma. 

Justification of~\eqref{gradu=0onu=0}: Let $T\in C^1(\C)$ such that $T(z)=z$ for $|z|\le 1$, $T(z)=z/|z|$ for $|z|\ge2$. For $\dl>0$, let us set $u_\dl:=\dl T(\dl^{-1}u)$. On one hand, we have $u_\dl \to 0$ in $L^1\loc(\Om)$ as $\dl\dw0$ so $D u_\dl\to0$ in the sense of distributions. On the other hand, by the chain rule,
\begin{equation}
D u_\dl =DT(\dl^{-1}u)D_a u +\dl \lt[T(\dl^{-1}u^+) - T(\dl^{-1}u^-)\rt]\otimes\nu_u\h^1\restr J_u.
\label{nbu=0_on_u=0}
\end{equation}
As $\dl\dw0$, the second term in the right-hand side goes to $0$ in $\mc{D}'(\Om)$ and the first term converges to $ \un_{\Om''}D_a u$ in $L^1\loc(\Om)$ by the dominated convergence theorem. Identifying with the limit $D u_\dl\to0$ in $\mc{D}'(\Om)$, we get $D_au=0$ in $\Om''$.
\end{proof}

We now justify the equivalence of the energies $E'_\eps$ and $E''_\eps$ (recall definitions \eqref{AGeps} and \eqref{AGepsprime}). The problem is mostly to find a ``good'' square root for $v$, see \cite{DI03,Merlet06,IL2017}
for related results.

\begin{proposition}
\label{prop_equivalence_modeles}
Let $\eps>0$ and le $\Om$ be an open set with finite measure.  
\begin{enumerate}[(i)]
\item Let $u\in SBV(\Om,\C)$ be such that $E'_\eps(\ub\otimes\ub)<\oo$ and let $v:=u^2$. Then,  $v\in W^{1,1}(\Om,\C)$,  $\rot v=0$ and
\be\label{prop_equivalence_modeles_1}
E\e''(v)=E'\e(\ub\otimes\ub).
\ee
\item Conversely, if $v\in W^{1,1}(\Om,\C)$ is such that $E\e''(v)<\oo$, then 
there exists $u\in SBV^2(\Om,\C)$ with $v=u^2$, $\curl_a\ub=0$ and $u^++u^-=0$ $\h^1$-almost everywhere on $J_u$. In particular,~\eqref{prop_equivalence_modeles_1} holds.
\end{enumerate}\end{proposition}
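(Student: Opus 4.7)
For direction (i), the hypothesis $E'\e(\ub\otimes\ub)<\oo$ directly gives $\ub\in SBV(\Om,\R^2)$ with $\ub^++\ub^-=\bs{0}$ on $J_{\ub}$ (hence $(u^+)^2=(u^-)^2$ $\h^1$-a.e.\ on $J_u$) and $\curl_a \ub=0$. Lemma~\ref{lem_curlrot} then applies and yields $v=u^2\in W^{1,1}\loc(\Om,\C)$ together with $\rot v=(\curl_a u)\ub=0$. To promote $v$ to $W^{1,1}(\Om,\C)$, I would use that $|\Om|<\oo$ together with the potential term controlling $\||u|^2-1\|_{L^2}$, which forces $|v|=|u|^2\in L^1(\Om)$; the chain rule identity $\nb v=2u\,\nb_a u$ combined with Cauchy--Schwarz against the two terms of the energy then gives $\nb v\in L^1(\Om)$. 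The equality~\eqref{prop_equivalence_modeles_1} follows from the pointwise identities $|\nb v|^2/(4|v|)=|\nb_a u|^2$ (valid a.e.\ on $\{u\ne 0\}$, with both sides vanishing a.e.\ on $\{u=0\}$ by~\eqref{gradu=0onu=0}) and $(1-|v|)^2=(1-|u|^2)^2$.

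For direction (ii), the natural candidate for a square root is $u:=\sigma\circ v$, with $\sigma$ as in~\eqref{defsigma}. Recall that $\sigma$ is smooth on $\C\sm(\R_-\cup\{0\})$, undergoes a sign-swap jump across $\R_-^*$, and satisfies $|\sigma'(z)|^2=1/(4|z|)$. Consequently, on $\Om\sm v^{-1}(\R_-\cup\{0\})$ the chain rule yields $|\nb_a u|^2=|\nb v|^2/(4|v|)$, which is integrable thanks to $E\e''(v)<\oo$. Along $v^{-1}(\R_-^*)$ one has $v_2=0$ and $v_1<0$, which forces $u_1=0$ and $u_2\ne 0$ on each side; the sign swap of $\sigma$ there automatically guarantees $u^++u^-=0$ $\h^1$-a.e.\ on the candidate jump set.

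The main obstacle will be to justify that this pointwise definition actually belongs to $SBV^2(\Om,\C)$: one must show that the jump locus $v^{-1}(\R_-^*)$ is countably $\h^1$-rectifiable with locally finite $\h^1$-measure, and that $Du$ splits into the expected absolutely continuous and jump parts. A clean route is to either (a) regularise $\sigma$ outside a shrinking tubular neighbourhood of its branch cut to obtain smooth $\sigma_\delta$, apply the $W^{1,1}$ chain rule to $u_\delta:=\sigma_\delta\circ v$, and pass to the limit via Ambrosio's $SBV$ closure theorem with the energy bound providing uniform control on $\int|\nb_a u_\delta|^2$ and on the jump energy; or (b) invoke the coarea formula for $v_2\in W^{1,1}$ to obtain rectifiability of the level set $\{v_2=0\}$ at the distinguished level $0$, using that the constraint $\rot v=0$ (via~\eqref{def:rotcomp}) restricts $\nb v$ enough to rule out pathological behaviour along the branch cut.

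Once $u\in SBV^2(\Om,\C)$ has been constructed with $v=u^2$ and $u^++u^-=0$ on $J_u$, everything closes easily: Lemma~\ref{lem_curlrot} gives $\rot v=(\curl_a u)\ub$, so the hypothesis $\rot v=0$ (implicit in $E\e''(v)<\oo$) forces $\curl_a u=0$ on $\{u\ne 0\}$, while~\eqref{gradu=0onu=0} handles $\{u=0\}$. The energy equality then reduces once more to the same two pointwise identities $|\nb v|^2/(4|v|)=|\nb_a u|^2$ and $(1-|v|)^2=(1-|u|^2)^2$ used in (i).
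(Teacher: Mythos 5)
Your argument is essentially the same as the paper's: from the energy bound and $|\Om|<\oo$ you get $u\in SBV^2\cap L^2$, hence $v\in L^1$ and $\nb v=2u\nb_au\in L^1$ by Cauchy--Schwarz; $\rot v=0$ follows from Lemma~\ref{lem_curlrot}; and the energy identity is the two pointwise equalities plus the convention on $\{v=0\}$. Fine.

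\textbf{Part (ii).} Here there is a genuine gap, and you have correctly sensed it yourself (``the main obstacle\ldots''), but neither of your two proposed fixes closes it. The problem with taking $u:=\sigma\circ v$ directly is that $\sigma$ has a \emph{fixed} branch cut along $\R_-$, and there is no control whatsoever on the level set $v^{-1}(\R_-^*)$: it need not be rectifiable nor have locally finite $\h^1$-measure, so $u$ defined this way has no reason to be $BV$ at all.

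Your route (a) (regularize $\sigma$ in a $\delta$-tube around the cut, pass to the limit) does not give a uniform $W^{1,1}$ bound for the family $u_\delta:=\sigma_\delta\circ v$. Near the cut, $|D\sigma_\delta|\sim 1/\delta$, and the contribution $\int_{\{v\in\text{transition tube}\}}|D\sigma_\delta(v)||\nb v|$ need not be bounded as $\delta\dw 0$ for a fixed cut direction. Moreover Ambrosio's $SBV$ compactness/closure theorem requires uniform control on both $\int|\nb_a u_\delta|^p$ for some $p>1$ and on the jump energy; your regularized $u_\delta$ live in $W^{1,1}$ (no jump part yet), so what you actually need is a uniform $BV$ bound, and you do not have it. Your route (b) (coarea for $v_2$) only gives rectifiability of the level sets $\{v_2=t\}$ for $\mathcal L^1$-almost every $t$; the distinguished level $t=0$ may be exceptional, and the constraint $\rot v=0$ does not obviously forbid that.

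The paper's key idea, which you are missing, is to \emph{average over the branch cut direction}. One introduces $u_\vhi^{\dl,\lambda}:=e^{i\vhi/2}\sigma^{\dl,\lambda}(e^{-i\vhi}v)$ (your regularization, but with the cut rotated by an arbitrary angle $\vhi$), and then shows the pointwise bound $\int_0^{2\pi}|\nb u_\vhi^{\dl,\lambda}|\,d\vhi\le C\,|\nb v|/\sqrt{|v|}$, uniformly in $\dl,\lambda$. Integrating in $x$ and applying Fubini, the energy bound on $\int|\nb v|^2/|v|$ together with $|\Om|<\oo$ gives a uniform bound on the \emph{average over $\vhi$} of $\|u_\vhi^{\dl,\lambda}\|_{W^{1,1}}$. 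This lets one pick a sequence $\vhi_n$ (not a fixed $\vhi$!) along which $u_{\vhi_n}^{\dl_n,\lambda_n}$ is bounded in $W^{1,1}$, extract a $BV$ weak-$*$ limit $u$, and identify $u^2=v$. The $SBV^2$ structure, $\nb_cu=0$, $(u^+)^2=(u^-)^2$ on $J_u$, and $\curl_au=0$ then follow from the $BV$ chain rule applied to $v=u^2$ and from Lemma~\ref{lem_curlrot}, exactly as you sketch in your final paragraph. So your conclusion step is fine; what fails is the construction of a $BV$ square root, and the remedy is the averaging in $\vhi$.
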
\smallskip

\begin{proof}~

\noindent
\textit{(i).} Let $v:=u^2$ with $u$ as in (i). The condition $E\e'(\ub\otimes\ub)<\oo$ implies $u\in SBV^2(\Om)$ and from the assumption on $\Om$ also $u\in  L^2(\Om)$. As a consequence $v\in L^1(\Om)$. By Lemma \ref{lem_curlrot} (replacing $SBV\loc^2$ by $SBV^2$ in the hypothesis), we have $v\in W^{1,1}(\Omega,\C)$ with $\nb v=2u\nb_au$ and $\rot v=0$. Moreover, using the convention $|\nb v|^2/|v|=0$ wherever $v=0$, we have,
\[
\int_\Om \dfrac{|\nb v|^2}{4|v|}\, = \int_\Om |\nb_au|^2. 
\]
Eventually, $(1-|v|)^2=(1-|u|^2)^2$ from which \eqref{prop_equivalence_modeles_1} follows.\medskip

\noindent
\textit{(ii).} Let $v$ be as in (ii).

\noindent
\textit{Step 1. Selection of a $BV$ square root $u$ of $v$.} We would like to define a square root of $v$ of the form $u_\vhi:=e^{i\vhi/2} \sigma(e^{-i\vhi} v)$ for some $\vhi\in\R$.  We follow a strategy similar to the one of~\cite{DI03}. To deal with the discontinuity of $\sigma$ through $(-\oo,0)$ and the fact that $z\mapsto |\sigma(z)|=\sqrt{|z|}$ is not smooth at 0, we need to smooth out $\sigma$. First, we introduce $\chi\in C^\oo(\R_+)$ such that $\chi(s)=s$ for $0\le s\le 1$, $0\le\chi'\le1$ and $\chi(s)=1$ for $s\ge 2$. Next, we consider for $0<\dl<\pi$ a $2\pi$-periodic odd function  $f_\dl \in C^\oo(\R,(-\pi/2,\pi/2))$ such that
\[ 
\begin{array}{rlrl}
&f_\dl(\te)=\te/2&\text{ for }&\te\in[0, \pi-\dl],\\ 
1/2&\ge f_\dl'(\te)\ge 0 &\text{ for }&\te\in[\pi -\dl, \pi-\dl/2],\\
0&\ge f_\dl'(\te)\ge -2\pi/\dl & \text{ for }&\te\in[  \pi-\dl/2,  \pi].
\end{array}
\]
For $z=r e^{i\te}\in \C$ and $0<\dl<\pi$, we define $\sigma^\dl(z):=\sqrt{r}f_\dl(\te)$ and for $0<\dl<\pi$, $\lambda\ge1$, 
\[
\sigma^{\dl,\lambda}(z):=\chi(\lambda|z|)  \sigma^{\dl}(z).
\]
Eventually, we set
\[
u_\vhi^{\dl,\lambda} := e^{i\vhi/2}\sigma^{\dl,\lambda}(e^{-i\vhi}v).
\]
By construction, $u_\vhi^{\dl,\lambda}\in W^{1,1}(\Om,\C)$ and $(u_\vhi^{\dl,\lambda})^2\to v$ pointwise, uniformly in $\vhi$ as $\dl\dw0$ and $\lambda\up\oo$. We also easily see that 
\[
\int_0^{2\pi}|\nb u_\vhi^{\dl,\lambda}|\, d\vhi \le C \dfrac{|\nb v|}{\sqrt{|v|}}\qquad\text{almost everywhere in }\Om,
\]
for some universal constant $C>0$. Integrating on $\Om$ and using Fubini, we have 
\[
\int_0^{2\pi} \lt(\int_\Om |\nb u_\vhi^{\dl,\lambda}|\,\rt)\, d\vhi \le C \int_\Om \dfrac{|\nb v|}{\sqrt{|v|}}\le |\Om|^{1/2}\lt(\int_\Om \dfrac{|\nb v|^2}{|v|}\,\rt)^{1/2}<\oo.
\]
Let $(\dl_n)\dw0$ and $(\lambda_n)\up\oo$, we deduce that there exists a sequence $(\vhi_n)\sub[0,2\pi)$,  such that $(u_{\vhi_n}^{\dl_n,\lambda_n})$ is bounded in $W^{1,1}(\Om,\C)$. Therefore, there exists $u\in BV(\Om,\C)$  such that up to extraction,  $u_{\vhi_n}^{\dl_n,\lambda_n}\to u$ almost everywhere and weakly star in $BV(\Om,\C)$. Passing to the limit in the relation $({u_{\vhi_n}^{\dl_n,\lambda_n}})^2\to v$ as $n\up\oo$, we get $u^2=v$, with $u\in BV(\Om,\C)$.\smallskip

\noindent
\textit{Step 2. Properties of $u$.} From the chain rule for $BV$ functions, we have 
\[
\nb v =2u\nb_a u + 2u \nb_c u + \lt[(u^+)^2-(u^-)^2\rt]\nu_u\h^1\restr{J_u}.
\]
By identification, we obtain $\nb v=2u\nb_au$, $\nb_c u=0$ and $(u^+)^2=(u^-)^2$ $\h^1$-almost everywhere on $J_u$. Using $|u|^2=|v|$, we have 
\[
\int_\Om |\nb_a u|^2\, = \int_\Om \dfrac{|\nb v|^2}{4|v|}\,<\oo,
\]
hence $u\in SBV^2(\Om,\C)$. Eventually, by Lemma~\ref{lem_curlrot}, we obtain $\curl_au=0$ and from the above identity, we get~\eqref{prop_equivalence_modeles_1}.
 \end{proof}


\section{Entropies and entropy production}\label{Sentrop_prod}

 We recall that we have fixed an arbitrary function $\chi_0\in C^{\infty}_c([0,+\infty),[0,1])$ with $\chi_0(1)=1$ and $\supp \chi_0\subset (1/2,2)$
 and that for us an entropy is a function $\Phib\in C^\infty (\C,\C^2)$ such that for $z\neq0$,
 \[
  \Phib(z)= \chi_0(z)\Phib\lt(\frac{z}{|z|}\rt)
 \]
and such that for every $u\in C^\infty(\Omega,\So)$ with $\curl u=0$, we have  $\nabla\cdot[\Phib(u)]=0$. Let us also recall that as an alternative, we could equivalently define  entropies by the condition (see e.g. \cite[Proposition 3]{IM12})
\[
\dfrac{d}{d\te}[\Phib(e^{i\te})]\in \C \binom{\cos\te}{\sin\te} \quad\text{ for every }\te\in\R.
\]
Notice that this condition differs from the one of~\cite{DKMO01} since 
we consider the constraint $\curl \ub=0$ instead of $\ncd \mb=0$ (they are equivalent up to a rotation of angle $\pi/2$). We now set up some definitions that will be used throughout the paper.
\begin{definition}~\label{def_entrop}

\begin{enumerate}[(i)]
\item We denote by $\ent$ the space of entropies and by $\entev$  the subspace of even entropies, namely the elements $\Phib\in\ent$ such that $\Phib(-z)=\Phib(z)$ for every $z\in\So$.\smallskip
 \item We denote by $C^\oo_a(\T,\C)$ the subspace formed by the $\pi$-antiperiodic  functions  (\textit{i.e. } such that $\lambda(\cd+\pi)=-\lambda$) in the space $C^\oo(\T,\C)$ of smooth $2\pi$-periodic complex-valued functions.\smallskip
 \item
We denote by $\As(\Om)$ the set of measurable functions $v:\Om\to\So$  such that (recall definition \eqref{def:entroprod}) $\mus_\Phib[v]$ is a Radon measure for every $\Phib\in\entev$.\smallskip
\item
We call zero-states the functions $v\in \As(\Om)$ such that $\mus_\Phib[v]= 0$ for every $\Phib\in\entev$. The subset of zero-states is denoted by $\As_0(\Om)$.
\end{enumerate}

\end{definition}

Recall from \eqref{def:Gamma} the definition of $\Ups[\lambda]$ which associate to each $\lambda\in C^\oo(\T,\C)$ an entropy.
The following lemma is adapted from ~\cite[Lemma 3]{DKMO01} (see also \cite[Proposition 4]{IM12}). 
\begin{lemma}~
\label{lem_Philambda}
\begin{enumerate}[(i)]
\item \label{lem_Philambda_pt1}The functions $\Ups[\lambda]$ are entropies. Moreover, the mapping $\Ups: C^\oo(\T,\C)\to\ent$ is one-to-one and onto.
\item   \label{lem_Philambda_pt2}$\Ups$ maps  $C^\oo_a(\T,\C)$ onto $\entev$.
 \end{enumerate}
 \end{lemma}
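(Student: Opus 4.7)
The plan is to work in the moving orthonormal frame $\xib(\te) := (\cos\te,\sin\te)$, $\xib^\perp(\te) := (-\sin\te,\cos\te)$ of $\R^2$, which satisfies $\frac{d}{d\te}\xib = \xib^\perp$ and $\frac{d}{d\te}\xib^\perp = -\xib$. In this frame the definition \eqref{def:Gamma} reads simply as $\Ups[\lambda](e^{i\te}) = \lambda(\te)\,\xib^\perp(\te) - \lambda'(\te)\,\xib(\te)$. The whole lemma will reduce to routine manipulations in this frame; no step presents a real obstacle, so the main task is just careful bookkeeping and keeping track of the fact that $\lambda$ is $\C$-valued while $\xib,\xib^\perp$ are $\R$-valued.

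For part (i), I first verify $\Ups[\lambda] \in \ent$: a direct differentiation using the frame identities shows the $\lambda'\,\xib^\perp$ contributions cancel, leaving
\[\frac{d}{d\te}\Ups[\lambda](e^{i\te}) = -(\lambda + \lambda'')(\te)\,\xib(\te),\]
which is a $\C$-multiple of $\xib$. By the equivalent characterization of entropies recalled just before the lemma, this shows $\Ups[\lambda] \in \ent$. For injectivity, if $\Ups[\lambda]\equiv 0$ then $\lambda\,\xib^\perp = \lambda'\,\xib$ identically; since $\{\xib(\te),\xib^\perp(\te)\}$ is $\R$-linearly independent in $\R^2$, hence $\C$-linearly independent in $\C^2$, both coefficients must vanish, so $\lambda\equiv 0$. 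For surjectivity, given $\Phib \in \ent$, decompose
\[\Phib(e^{i\te}) = a(\te)\,\xib(\te) + b(\te)\,\xib^\perp(\te)\]
with $a,b\in C^\infty(\T,\C)$ obtained by projection onto the frame. Differentiating and invoking the entropy criterion gives
\[\frac{d}{d\te}\Phib(e^{i\te}) = (a'-b)\,\xib + (a+b')\,\xib^\perp \in \C\,\xib,\]
which forces $a + b' = 0$, so setting $\lambda := b$ yields $\Phib = \Ups[\lambda]$.

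For part (ii), evenness of $\Phib$ on $\So$ translates to $\Phib(e^{i(\te+\pi)}) = \Phib(e^{i\te})$ for all $\te\in\R$. Using $\xib(\te+\pi) = -\xib(\te)$ and $\xib^\perp(\te+\pi) = -\xib^\perp(\te)$, one computes
\[\Ups[\lambda](-e^{i\te}) = -\lambda(\te+\pi)\,\xib^\perp(\te) + \lambda'(\te+\pi)\,\xib(\te).\]
Matching this with $\Ups[\lambda](e^{i\te}) = \lambda(\te)\,\xib^\perp(\te) - \lambda'(\te)\,\xib(\te)$ in the frame $(\xib,\xib^\perp)$ and invoking the linear independence established above gives the equivalence $\Ups[\lambda] \in \entev \iff \lambda(\te+\pi) = -\lambda(\te)$ for all $\te$, i.e., $\lambda \in C^\infty_a(\T,\C)$. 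Combined with the bijection of (i), this yields $\Ups(C^\infty_a(\T,\C)) = \entev$, completing the proof.
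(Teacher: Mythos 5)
Your proof is correct and follows the same route as the paper, but makes part (i) self-contained: where the paper defers entirely to Proposition~4 of [IM12] (stated there for the divergence-free constraint and transferred here by $\ub \leftrightarrow \ub^\perp$, $\curl\ub \leftrightarrow \ncd\ub^\perp$), you unfold that result and reprove the bijectivity from scratch by decomposing $\Phib(e^{i\te}) = a(\te)\,\xib(\te) + b(\te)\,\xib^\perp(\te)$ in the moving orthonormal frame, observing that the equivalent entropy criterion forces $a+b' = 0$ and hence $\Phib = \Ups[b]$, with injectivity from the $\C$-linear independence of $\xib(\te),\xib^\perp(\te)$ in $\C^2$. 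This is precisely the computation underlying the cited proposition, so nothing genuinely new, but having it explicit is a reasonable expository choice. Your part (ii) is in substance identical to the paper's: you match coefficients in the frame after the shift $\te\mapsto\te+\pi$, while the paper takes the dot product of the evenness identity with $\xib^\perp(\te) = (-\sin\te,\cos\te)$ and reads off $-\lambda(\te+\pi) = \lambda(\te)$; both isolate the same coefficient, and your observation that the second matching equation $\lambda'(\te+\pi)=-\lambda'(\te)$ is redundant (it follows by differentiation) is a minor bonus.
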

 \begin{proof} The first point is the counterpart of~\cite[Proposition 4]{IM12} with the changes $\ub\leftrightarrow \mb:=\ub^\perp$, $\curl u\leftrightarrow \ncd\mb$.  
 
 For the second point, if $\lambda\in C_a^\oo(\T,\C)$, we easily check from the formula that $\Ups[\lambda]$ is even, hence from the first point, $\Ups[\lambda]\in\entev$. 
 
 \noindent
 Conversely, if $\Phib\in\entev$, by~\eqref{lem_Philambda_pt1}, there exists $\lambda\in C^\oo(\T,\C)$ such that $\Phib=\Ups[\lambda]$. Writing $\Phib(e^{i(\te+\pi)})=\Phib(-e^{i\te})=\Phib(e^{i\te})$ and taking the dot product with $(-\sin\te,\cos\te)$, we obtain $-\lambda(\te+\pi)=\lambda(\te)$ so $\lambda\in C_a^\oo(\T,\C)$, that is  $\Phib\in\Ups[C^\oo_a(\T,\C)]$.
 \end{proof}

We now recall the decomposition of $D \Phib$ established in~\cite{DKMO01} and reformulate it in our setting of $\curl$-free and $\C^2$-valued entropies. The resulting formula~\eqref{id_muPsialpha} will provide a control of the entropy productions $\mus_{\Phib}[v]$  in $H^{-1}(\Om)$ and in the space of Radon measures at the limit as stated in~Proposition~\ref{prop_muPhiH-1} and in~\eqref{control_mu_Phi0}.
\begin{lemma}
\label{lem_DPhi}
Let $\Phib\in \ent$  and for $z\in\C$ let $\lt(D\Phib\rt)_{i,j}(z):=\lt(\dfrac{\pt \Phib_i}{\partial z_j}(z)\rt)_{i,j}\in \mathcal{M}_{2,2}(\C)$ denote the matrix representation of the differential of $\Phib$ at $z\in\C\sim\R^2$.
\begin{enumerate}[(i)]
\item 
There exist $\Psib\in C^\oo_c(\C,\C^2)$ and $\alpha\in C^\oo_c(\C,\C)$ both supported in $B_2\sm\ov B_{1/2}$ such that
\[
D \Phib(z) + 2\Psib(z)\otimes {\bf z} =\alpha(z)J\qquad \text{for every }z\in\C\text{ and with }\ J:=\lt(\begin{array}{cc} 0&1\\-1&0\end{array}\rt).
\] 
In particular, for $u\in W^{1,2}(\Om,\C)$, we have  (recall  definition \eqref{def:entropintro}),
\[
\mu_{\Phib}[u]= \Psib(u)\cd\nb(1-|u|^2) + \alpha(u)\curl u.
\]
\item If moreover $\Phib\in\entev$, 
there exist $\Psis,\alfs\in C^\oo_c(\C,\C^2)$  such that 
\begin{enumerate}[(a)]
\item $\supp\Psis,\ \supp\alfs \sub B_{\sqrt{2}}\sm\ov B_{1/\sqrt{2}}$,
\item  for every $z\in\C$, $\alfs(z)$ is collinear to $\bs \sigma(z)$ ,
\item for every $v\in W^{1,1}(\Om,\C)$,
\be\label{id_muPsialpha}
\mus_{\Phib}[v]= \Psis(v)\cd\nb(1-|v|) + \alfs(v)\cd\rot v.
\ee
\end{enumerate}
\end{enumerate}
\end{lemma}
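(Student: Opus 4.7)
The plan is to establish (i) via a polar decomposition of $D\Phib$ on $\C$ that encodes the entropy condition, and then to deduce (ii) by applying (i) to $u=\sigma(v)$, leveraging the evenness of $\Phib$ to handle the branch cut of $\sigma$, and invoking Lemma~\ref{lem_curlrot} to convert $\curl_a\sigma(v)$ into $\rot v$.

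For (i), write $z = re^{i\theta}$ and introduce the polar frame $\hat r := (\cos\theta,\sin\theta)$, $\hat\theta := (-\sin\theta,\cos\theta)$. The factorization $\Phib(z) = \chi_0(r)\Phib(e^{i\theta})$ gives in polar coordinates
\begin{equation*}
D\Phib(z) = \chi_0'(r)\,\Phib(e^{i\theta})\otimes\hat r + \frac{\chi_0(r)}{r}\,\partial_\theta\Phib(e^{i\theta})\otimes\hat\theta.
\end{equation*}
The entropy condition recalled just before Definition~\ref{def_entrop} is equivalent to $\partial_\theta\Phib(e^{i\theta}) = \alpha_0(\theta)\hat r$ for some $\alpha_0\in C^\infty(\T,\C)$. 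Setting $\alpha(z):=\chi_0(|z|)\alpha_0(\arg z)/|z|$ and noting $J = \hat r\otimes\hat\theta - \hat\theta\otimes\hat r$, the $\hat r\otimes\hat\theta$ components of $D\Phib(z)$ and of $\alpha(z)J$ already agree; the residual has the form $-2\Psib(z)\otimes\mathbf{z}$ (with $\mathbf{z} = r\hat r$), which uniquely determines $\Psib\in C^\infty_c(\C,\C^2)$. The compact support of $\chi_0$ forces $\alpha,\Psib$ to be supported in $B_2\setminus\bar B_{1/2}$. Contracting $D\Phib(u) = -2\Psib(u)\otimes\mathbf{u} + \alpha(u)J$ with $\nabla u$ and using $\sum_k u_k\partial_j u_k = \tfrac12\partial_j|u|^2$ together with $\sum_{jk}J_{jk}\partial_j u_k = \curl u$ produces the announced formula for $\mu_\Phib[u]$.

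For (ii), let $\Psib',\alpha'$ be those provided by (i). Since $\Phib\in\entev$, Lemma~\ref{lem_Philambda} yields $\Phib = \Ups[\lambda]$ with $\lambda\in C^\infty_a(\T,\C)$; a direct calculation gives $\alpha_0 = -(\lambda+\lambda'')$, which is again $\pi$-antiperiodic, hence $\alpha'(-z) = -\alpha'(z)$. A parallel check, using that $\hat\theta$ itself flips sign under $z\mapsto -z$, shows $\Psib'(-z) = \Psib'(z)$. Now set
\begin{equation*}
\Psis(v) := \Psib'(\sigma(v)),\qquad \alfs(v):=\frac{\alpha'(\sigma(v))}{|v|}\,\bs\sigma(v),
\end{equation*}
extended by $0$ at $v=0$; by construction $\alfs(v)$ is collinear to $\bs\sigma(v)$, giving (b). Across the branch cut $(-\infty,0)$ of $\sigma$, the evenness of $\Psib'$ makes $\Psis$ smooth, while the simultaneous sign flips of $\bs\sigma$ and of $\alpha'\circ\sigma$ (the latter by oddness of $\alpha'$) do the same for $\alfs$; combined with the compact support of $\Psib',\alpha'$ away from $0$ this yields (a). For (c), apply (i) to $u=\sigma(v)$ on simply connected subsets not meeting the branch cut, use $|u|^2=|v|$, and invoke Lemma~\ref{lem_curlrot} to rewrite $\alpha'(\sigma(v))\,\curl_a\sigma(v)$ as $\alfs(v)\cdot\rot v$; since both sides depend on $v$ only through the globally smooth maps $\Phib\circ\sigma$, $\Psis$ and $\alfs$, these local identities patch into a single distributional identity on $\Omega$.

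The main obstacle lies in (ii): one must ensure that $\Psis$ and $\alfs$ are genuinely smooth on all of $\C$. Smoothness at $v=0$ is immediate from the compact support of $\chi_0$, but smoothness across the branch cut of $\sigma$ rests crucially on the parity properties of $\Psib'$ (even) and $\alpha'$ (odd), which are precisely what the hypothesis $\Phib\in\entev$ delivers through Lemma~\ref{lem_Philambda}.
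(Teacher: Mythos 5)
Your proof is essentially correct and follows the same overall route as the paper, with two mild departures and one step that needs firming up. For part~(i), you give a self-contained polar-frame computation of $D\Phib$ where the paper just cites~\cite{DKMO01}; the content is the same (the entropy relation $\partial_\theta\Phib(e^{i\theta})=\alpha_0(\theta)\hat r$ and the identity $J=\hat r\otimes\hat\theta-\hat\theta\otimes\hat r$), and your version is arguably more transparent. For the parities in~(ii), you read $\alpha(-z)=-\alpha(z)$ and $\Psib(-z)=\Psib(z)$ directly off your explicit formulas (via $\alpha_0=-(\lambda+\lambda'')$ being $\pi$-antiperiodic and $\hat\theta$ flipping sign), whereas the paper symmetrizes arbitrary $\Psib,\alpha$ satisfying~(i) by replacing them with $\tfrac12(\Psib(z)+\Psib(-z))$ and $\tfrac12(\alpha(z)-\alpha(-z))$; both are fine. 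The step that is too loose is~(c): you propose to ``apply~(i) to $u=\sigma(v)$ on simply connected subsets not meeting the branch cut,'' but this is problematic for general $v\in W^{1,1}(\Om,\C)$ --- $v$ is not continuous, so there is no reason such subsets of $\Om$ exist, and even where they do, $\sigma(v)$ is typically only $W^{1,1}$, not the $W^{1,2}$ required to invoke~(i). Your closing sentence, that both sides of \eqref{id_muPsialpha} factor through the globally smooth maps $\Phib\circ\sigma$, $\Psis$, $\alfs$, is the right resolution: the claimed identity is a pointwise algebraic identity in $(v(x),\nabla v(x))$. The paper makes this rigorous by smoothing $v$ to $v_k$, taking at each Lebesgue point $x$ of $(\nabla v,v)$ with $v(x)\neq0$ a local smooth square root of $v_k$ adapted to the direction $v_k(x)/|v_k(x)|$, applying~(i) there, converting via Lemma~\ref{lem_curlrot}, and passing to the limit $k\to\infty$ using that $(\nb v_k,v_k)\to(\nb v,v)$ pointwise at Lebesgue points. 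You should replace your branch-cut sentence by this smoothing/Lebesgue-point argument (or an explicit pointwise verification at fixed $(z,A)\in\C^*\times\mathcal M_{2,2}(\R)$); as written it does not compile into a proof.
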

\com{Le point (ii)(b) ne sert pas pour l'instant mais on pourrait y repenser si on voulait remplacer $\|\rot v\|^2_{H^{-1}}$ par autre chose dans l'énergie. Pour la compacit\'e, c'est $\alfs(v)\cd\rot v$ qui intervient.}
\begin{proof} The first point is just a transposition of~\cite[Lemmas 1 and 2]{DKMO01} with the changes $\ub\leftrightarrow \ub^\perp=:\mb$ and $\curl \ub \leftrightarrow \ncd \mb$. The statement about the supports of $\alpha$ and $\Psib$ is obvious from the explicit definitions of $\Psib$ and then $\alpha$ given there. We now assume $\Phib\in\entev$ and establish (ii).\smallskip

\noindent\emph{Step 1. Symmetrization.} Differentiating the identity $\Phib(-z)=\Phib(z)$ and using the first part of the lemma, we obtain 
\[
-D\Phib(-z)= -2\Psib(-z)\otimes {\bf z} -\alpha(-z)J = -2\Psib(z)\otimes {\bf z} + \alpha(z)J,
\] 
In view of these identities, we can substitute the mean value $\frac12\lt(\Psib(z)+\Psib(-z)\rt)$ for  $\Psib(z)$ and  the quantity $\frac12\lt(\alpha(z)-\alpha(-z)\rt)$ for $\alpha(z)$. The new functions still comply to the conclusions of point (i) and we now have
\be\label{prf_lem_DPhi_1}
\lt\{\begin{array}{rcl}\Psib(-z)&=&\Psib(z),\\\alpha(-z)&=&-\alpha(z),\end{array}\rt.
\ \text{for }z\in \C. 
\ee
From now on, we assume that $\Psib$ and $\alpha$ satisfy~\eqref{prf_lem_DPhi_1} and the properties stated in (i).\medskip

\noindent\emph{Step 2. Smoothing.}
Let $v\in W^{1,1}(\Om,\C)$. Let $(v_k)\sub C^\oo(\Om,\C)$ be a sequence of approximations of $v$  such that, as $k\up\oo$, $(\nb v_k,v_k)\to(\nb v,v)$ in $L^1(\Om)$ and pointwise at every Lebesgue point of the mapping  
\[x\in\Om\mapsto (\nb v(x),v(x))\in\C^3.\]
By definitions \eqref{def:rotcomp} \& \eqref{def:entroprod} of $\rot v$ and $\mus_{\Phib}$, the assumptions on the sequence $(v_k)$ yield,
\be\label{prf_lem_DPhi_0}
\begin{cases}\rot v_k\to\rot v,\\ \mus_{\Phib}[v_k]\to \mus_{\Phib}[v],\end{cases}
\ \text{  at every Lebesgue point of }(\nb v,v).
\ee

Let us fix $x$, Lebesgue point of $(\nb v,v)$ with $v(x)\ne0$, let us fix $k$ large enough so that $v_k(x)\ne0$ and let $\xi:=v_k(x)/|v_k(x)|$. By continuity, there exists $0<r<\dist(x,\R^2\sm\Om)$ (depending on $k$) such that  $\vb_k\cd \xib>0$ in $B_{r}(x)$. The function $v_k/\xi$ is smooth and takes values in $\{z\in \C:\Re z>0\}$ in $B_r(x)$ so $u_k:=\sigma(\xi)\sigma(v_k/\xi)$ is smooth in $B_{r}(x)$ and $(u_k)^2=v_k$. Using Step~1, we write in $B_r(x)$,
\[
\mus_{\Phib}[v_k]= \mu_{\Phib}[u_k] =  \Psib(u_k)\cd\nb(1-|u_k|^2) + \alpha(u_k)\curl u_k.
\]
Using Lemma~\ref{lem_curlrot} and the identities, $u_k^2=v_k$, $|v_k|=|u_k|^2$, we get 
\begin{align}
\mus_{\Phib}[v_k]&=\Psib(u_k)\cd\nb(1-|v_k|)+ \dfrac{\alpha(u_k)}{|v_k|} \ub_k\cd\rot v_k.
\label{prf_lem_DPhi_2}
\end{align}
Now, taking into account the symmetries~\eqref{prf_lem_DPhi_1}, we define for $z\in\C$,
\[
\Psis(z):=\Psib(\sigma(z))=\Psib(-\sigma(z)),\qquad \alfs(z):=\dfrac{\alpha(\sigma(z))}{|z|}{\bs \sigma}(z)=\dfrac{\alpha(-\sigma(z))}{|z|}(-\bs \sigma(z)).
\]
These mappings are smooth, supported in  $B_{\sqrt{2}}\sm B_{1/\sqrt{2}}$ and by construction, $\alfs(z)$ is collinear to $\bs \sigma(z)$. Moreover, by~\eqref{prf_lem_DPhi_2}, the identity~\eqref{id_muPsialpha} holds true in $B_r(x)$ with $v_k$ in place of $v$. \medskip

\noindent\emph{Step 3. Sending $k$ to $+\oo$ and concluding.}~

Writing~\eqref{id_muPsialpha} with $v=v_k$ at point $x$, sending $k$ to $+\oo$ and recalling,
\[(\nb v_k(x),v_k(x))\ \st{k\up\oo}\longto\ (\nb v(x),v(x)),\]
and~\eqref{prf_lem_DPhi_0}, we obtain~\eqref{id_muPsialpha} at point $x$. 
This holds at every Lebesgue point $x$ of $(\nb v,v)$ such that $v(x)\ne0$. At Lebesgue points with $v(x)=0$, both sides of the identity vanish (because $\chi_0,\Psis,\alfs= 0$ in the neighborhood of 0) so~\eqref{id_muPsialpha} holds almost everywhere in $\Om$. Eventually, since both sides define locally integrable functions, the identity holds in $L^1\loc(\Om)$.
\end{proof}

We conclude this section by proving that   if $v\in W^{1,1}(\Om,\So)$ and if $\mus_\Phib[v]=0$ for at least one even entropy $\Phib$ which satisfies a mild non-degeneracy condition,  then $\rot v=0$. For instance, this holds for $\Phi=\Ups[\lambda]$ with $\lambda:\T\to \C$ analytic, non-constant and $\pi$-antiperiodic, in particular for the trigonometric entropies $\Phib^{n}$ with $n$ odd, $n\ne\pm 1$. The converse property is also true, if $\rot v=0$ and $v\in W^{1,1}\loc(\Om,\So)$ then $v\in\As_0(\Om)$.


\begin{lemma}~
\label{lem_rot=0}
Let $v\in W^{1,1}\loc(\Om,\So)$ and  $\lambda_0\in C_a^\oo(\T,\C)$ be such that the set of zeros of $\lambda_0+\lambda_0''$ is at most countable. 

Then the three following properties are equivalent,
\begin{center}
(i)~$\mus_{\Ups[\lambda_0]}[v]=0$ in $L^1\loc(\Om)$,\qquad (ii)~$\rot v=0$ in $L^1\loc(\Om)$,\qquad(iii)~$v\in\As_0(\Om)$.
\end{center}
\end{lemma}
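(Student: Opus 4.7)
The plan is to prove the three implications (iii) $\Rightarrow$ (i) $\Rightarrow$ (ii) $\Rightarrow$ (iii). The first is immediate from the definition of $\As_0(\Om)$ since $\Ups[\lambda_0]\in\entev$ by Lemma~\ref{lem_Philambda}. For (ii) $\Rightarrow$ (iii), I would fix any $\Phib\in\entev$ and apply formula~\eqref{id_muPsialpha}: since $|v|=1$ almost everywhere, $\nb(1-|v|)=0$ a.e.\ (by the chain rule for Sobolev compositions), so $\mus_\Phib[v]=\alfs(v)\cd\rot v$ vanishes whenever $\rot v=0$.

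The substantive step is (i) $\Rightarrow$ (ii). The plan is to apply~\eqref{id_muPsialpha} to $\Phib=\Ups[\lambda_0]$ and use $|v|=1$ to reduce hypothesis (i) to the pointwise relation $\alfs(v)\cd\rot v=0$ a.e.\ in $\Om$, then exploit two structural observations. First, direct inspection of the matrix in~\eqref{def:rotcomp} shows it has rank one on $\{|v|=1\}$: writing $v=e^{i\te}$ and using the half-angle identities $v_1+1=2\cos^2(\te/2)$, $v_2=2\sin(\te/2)\cos(\te/2)$, $v_1-1=-2\sin^2(\te/2)$, it factors as $2\,\bs\sigma(v)\otimes(\cos(\te/2),-\sin(\te/2))$. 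Hence $\rot v$ takes values in the line $\R\,\bs\sigma(v)$ almost everywhere, and I would write $\rot v=\eta(x)\bs\sigma(v)$ for a scalar function $\eta$. Second, by Lemma~\ref{lem_DPhi}(ii)(b), $\alfs(v)$ is also collinear to $\bs\sigma(v)$, and for $\Phib=\Ups[\lambda_0]$ I would compute the coefficient explicitly: differentiating the definition of $\Ups[\lambda_0]$ yields $\frac{d}{d\te}\lt[\Ups[\lambda_0](e^{i\te})\rt]=-(\lambda_0+\lambda_0'')(\te)(\cos\te,\sin\te)$, so that in the oriented identity $\mu_{\Ups[\lambda_0]}[u]=\alpha(u)\curl u$ (valid for $|u|=1$) the scalar $\alpha(e^{i\te})$ equals $-(\lambda_0+\lambda_0'')(\te)$; unwinding the substitution $u=\sigma(v)$ from the proof of Lemma~\ref{lem_DPhi} then gives $\alfs(v)=-(\lambda_0+\lambda_0'')(\arg\sigma(v))\bs\sigma(v)$ on $\So$.

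Combining these two facts, the relation $\alfs(v)\cd\rot v=0$ becomes $-(\lambda_0+\lambda_0'')(\arg\sigma(v))\,\eta(x)=0$ almost everywhere. Let $Z\sub\T$ denote the at-most-countable zero set of $\lambda_0+\lambda_0''$. On the complement of $\bigcup_{\te\in Z}\{v=e^{2i\te}\}$, the first factor is nonzero, so $\eta=0$ there. On each level set $\{v=e^{2i\te_0}\}$ with $\te_0\in Z$, $v$ is constant, and the standard property of Sobolev functions gives $\nb v=0$ almost everywhere on such a set; hence $\rot v=0$ there directly from Definition~\ref{def_rot}. In both cases $\rot v=0$ a.e., which is (ii) and closes the chain.

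The main obstacle I anticipate is pinning down the precise expression $\alfs(v)=-(\lambda_0+\lambda_0'')(\arg\sigma(v))\bs\sigma(v)$ on $\So$. Unwinding the symmetrization and the change of variable $z\mapsto\sigma(z)$ in the proof of Lemma~\ref{lem_DPhi} is a careful but mechanical computation, and no new ideas beyond the chain rule already used there should be needed.
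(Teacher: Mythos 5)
Your proposal is correct, and it takes a genuinely different route from the paper's proof. The paper works throughout with a $BV$ lifting $\te$ of $v$ (writing $v=e^{2i\te}$, available by the results of \cite{DI03,Merlet06}), then computes both $\mus_{\Ups[\lambda]}[v]$ and $\rot v$ directly by the chain rule: both turn out to be scalar multiples of a common factor $R[\te]=\cos(\te)\pt_1\te+\sin(\te)\pt_2\te$, namely $\mus_{\Ups[\lambda]}[v]=-(\lambda+\lambda'')(\te)\,R[\te]$ and $\rot v=R[\te]\binom{\cos\te}{\sin\te}$, and then the countability argument closes the loop. Your proof avoids the lifting machinery altogether and instead extracts the same factored structure from two structural observations applied to the identity \eqref{id_muPsialpha}: (a) the matrix in \eqref{def:rotcomp} has rank one on $\So$ with range $\R\,\bs\sigma(v)$, so $\rot v=\eta\,\bs\sigma(v)$ pointwise; (b) the coefficient in $\alfs(v)=c\,\bs\sigma(v)$ can be pinned down to $c=-(\lambda_0+\lambda_0'')(\arg\sigma(v))$ by evaluating the relation $D\Phib(z)+2\Psib(z)\otimes\zb=\alpha(z)J$ in the tangential direction $\zb^\perp$ (the $\Psib\otimes\zb$ term annihilates $\zb^\perp$, and $J\zb^\perp=\zb$), which reproduces exactly the computation $\frac{d}{d\te}\Ups[\lambda_0](e^{i\te})=-(\lambda_0+\lambda_0'')(\te)\binom{\cos\te}{\sin\te}$ that the paper performs on the lifting. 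Both then invoke the vanishing of $\nabla v$ on the (countably many) level sets $\{v=e^{2i\te}\}$ for $\te$ in the zero set, exactly as you do. The paper's route buys a self-contained, explicit computation that never requires unwinding the symmetrization of $\alpha$; yours buys independence from the $BV$ lifting existence theorem and makes transparent the key geometric fact that $\rot v$ always points along $\bs\sigma(v)$ on $\So$. Your implication (ii)$\Rightarrow$(iii) via \eqref{id_muPsialpha} and $\nb(1-|v|)=0$ a.e.\ is a small but genuine simplification of the paper's corresponding step, which re-derives the same conclusion through the lifting formulas. The one place to keep watch is the symmetrization in Lemma~\ref{lem_DPhi} Step 1: because $\lambda_0$ is $\pi$-antiperiodic the candidate $\alpha(e^{i\vhi})=-(\lambda_0+\lambda_0'')(\vhi)$ is already odd, so the symmetrization $\alpha\mapsto\frac12(\alpha(\cd)-\alpha(-\cd))$ leaves it unchanged and your explicit formula for $\alfs$ on $\So$ is correct as written.
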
~

\begin{proof}~
Let $v\in W^{1,1}\loc(\Om,\So)$  and  $\lambda_0\in C_a^\oo(\T,\C)$ as in the statement of the lemma.  The implication (iii)~$\Rightarrow$~(i) is obvious, we prove below the implications  (i)~$\Rightarrow$~(ii) and then (ii)~$\Rightarrow$~(iii). In both cases, we use a $BV\loc$ lifting of $v$.\medskip

\noindent
\emph{Step 1. Choice a good lifting of $v$ and expressions of $\mus_{\Phib}[v]$ and $\rot v$.}

\noindent
Looking at the proof of Proposition \ref{prop_equivalence_modeles} (or directly using ~\cite{DI03,Merlet06}), we see that there exists $\te\in BV\loc(\Om)$ such that $v=e^{2i\te}$.  
 By the chain rule for $BV$-functions, 
%
we have with obvious notation,
\[
D v=2ivD_a\te + 2ivD_c\te  + [e^{2i\te^+}-e^{2i\te^-}]\nu_\te \h^1\restr J_\te.
\] 
 Identifying, we have $D_c\te=0$, $\te^+-\te^-\in \pi\Z$ $\h^1$-almost everywhere on $J_\te$ and, denoting $(\pt_1\te,\pt_2\te):=\nb_a \te$, 
\begin{equation}\label{eq:parv}
\pt_j v = 2(-\sin(2\te) + i \cos(2\te))\pt_j\te,\qquad \text{for }j=1,2.
\end{equation}

Let $\Phib\in\entev$, and $\lambda\in C^\infty_a(\T,\C)$  be such that $\Phib=\Ups[\lambda]$. Using the chain rule and denoting $R[\te]:=\cos(\te)\pt_1\te + \sin(\te)\pt_2\te$, we compute, 
\be
\mus_{\Phib}[v]=\ncd\lt[\Ups[\lambda](e^{i\te})\rt]
\label{proof_lem_rot=0_1} =-\lt[(\lambda''+\lambda)(\te)\rt] \,R[\te].
\ee
Similarly, using again the notation  $(\rho_1,\rho_2):=\rot v$ and \eqref{eq:parv}, there holds,
\begin{align*}
2\rho_1&=\lt(\cos(2\te) + 1\rt)(\cos(2\te)\pt_1\te + \sin(2\te)\pt_2\te) \\
 &~\qquad\qquad \qquad\qquad\qquad\qquad-\sin(2\te) (-\sin(2\te)\pt_1\te + \cos(2\te)\pt_2\te)\\
& =\lt[\lt(\cos(2\te) + 1\rt)\cos(2\te) + \sin(2\te) \sin(2\te)  \rt]\pt_1\te \\
&~\qquad\qquad\qquad\qquad\qquad\qquad+\lt[\lt(\cos(2\te) + 1\rt)\sin(2\te) - \sin(2\te) \cos(2\te)\rt]\pt_2\te\\
 &=\lt(\cos(2\te) + 1\rt)\pt_1\te + \sin(2\te) \pt_2\te\\
 &=2\cos(\te)R[\te],
 \end{align*}
and,
 \begin{align*}
 2\rho_2&=\sin(2\te)\cos(2\te)\pt_1\te + \sin(2\te)\pt_2\te\\
 &~\qquad\qquad \qquad\qquad\qquad\qquad  + \lt( \cos(2\te)  -1\rt) (-\sin(2\te)\pt_1\te + \cos(2\te)\pt_2\te )\\
 & =\lt[\sin(2\te)\cos(2\te)   - \lt(\cos(2\te) - 1\rt)\sin(2\te)\rt]\pt_1\te \\
 &~\qquad\qquad\qquad\qquad\qquad\qquad+\lt[\sin^2(2\te)+\lt(\cos(2\te) - 1\rt)\cos(2\te)\rt]\pt_2\te\\
 &=\sin(2\te)\pt_1\te +\lt(1-\cos(2\te)\rt) \pt_2\te\\
 &=2\sin(\te)R[\te].
\end{align*}
Hence, we have the identity
\be\label{proof_lem_rot=0_2}
\rot v = R[\te]\binom{\cos\te}{\sin\te}.
\ee\smallskip

\noindent
\emph{Step 2. (i)$\Rightarrow$(ii).}  Let us assume that $\mu_{\Ups[\lambda_0]}[v]=0$. Let 
\[Z_0:=\lt\{\vhi\in(-\pi,\pi]:(\lambda_0''+\lambda_0)(\vhi)=0\rt\}.\] 
Using~\eqref{proof_lem_rot=0_1} with $\Phib=\Ups[\lambda_0]$ we obtain that $R[\te]=0$ almost everywhere in  $\Om\sm\te^{-1}\lt(Z_0\rt)$ and by~\eqref{proof_lem_rot=0_2}, $\rot v=0$ almost everywhere in $\Om\sm\te^{-1}\lt(Z_0\rt)$.

The complement of $\te^{-1}\lt(Z_0\rt)$ is the union of the sets $v^{-1}(e^{2i\vhi})$ for $\vhi\in Z_0$. By assumption, this family of sets is at most countable and on each one   $\nb v$ vanishes almost everywhere by a standard property of Sobolev functions (see the justification of~\eqref{gradu=0onu=0} where the case of $SBV$ functions is treated).  We conclude that  $\nb v=0$  in $\te^{-1}\lt(Z_0\rt)$, hence $\rot v=0$ almost everywhere in $\Om$ which is~(ii).\medskip

\noindent
\emph{Step 3. (ii)$\Rightarrow$(iii).}  We assume that $\rot v=0$ so that \eqref{proof_lem_rot=0_2} implies $R[\te]=0$. From~\eqref{proof_lem_rot=0_1}, we deduce that $\mus_{\Phib}[v]=0$ for every $\Phib\in\entev$, as required. We have established $(iii)\Rightarrow(i)\Rightarrow(ii)\Rightarrow(iii)$, hence the lemma.
\end{proof}

\begin{remark}\label{rem:noentrot}
 Notice that combining \eqref{proof_lem_rot=0_1} and \eqref{proof_lem_rot=0_2} we see that
 as opposed to the oriented setting where the curl is the entropy production associated to the identity  (see Properties \ref{properties_Phin_2}),
 there is no  $\Phib=\Ups[\lambda]\in \entev$ such that $\rot v=(\Re (\mus_{\Phib}[v]),\Im(\mus_{\Phib}[v]))$. Indeed, $\lambda$ should solve $-(\lambda''+\lambda)=e^{i\te}$ which does not have periodic solution.
\end{remark}


\section{The trigonometric entropies}\label{SPhin}
In this section we collect the main properties of the trigonometric entropies from Definition \ref{def_Phi_n}. We recall that with $\Ups$ defined in \eqref{def:Gamma}, for every $n\in \Z$, we have set $\Phib^n=\Ups[2i e_n]\in \ent$, where $e_n$ is the trigonometric monomial $\te\in\R\mapsto e^{in\te}\in\C$.
We start with some  immediate consequences of the definition and of Lemma \ref{lem_Philambda}.
 \begin{properties}~
  \begin{enumerate}[(i)]
  \item For $n\in\Z$, we have $\Phib^{-n}=\ov{\Phib^n}$.
 \item
 By point~(i) of Lemma~\ref{lem_Philambda} and density of the trigonometric polynomials in $C^\oo(\T,\C)$, $\Span\{\Phib^n\}_{n\in\Z}$ is dense in $\ent$. 
 \item We have $e_n\in C^\oo_a(\T,\C)$ if and only if $n$ is odd. In fact, $\Span \{e_{2m+1}\}_{m\in\Z}$ is dense in $C_a^\oo(\T,\C)$. From Lemma~\ref{lem_Philambda}~(ii), we see that $\Span\{\Phib^{2m+1}\}_{m\in\Z}$ is dense in $\entev$.
 \end{enumerate}
 \end{properties}
 We derive an expression for the $\Phib^n$'s that first easily leads to the Properties~\ref{properties_Phin_2} below and then is used to establish the ``wedge products" formulas of Lemma~\ref{lem_Phinwedges}.
 \begin{lemma}~
 \label{lem_formulaPhin}
 \begin{enumerate}[(i)]
 \item For $n\in \Z$ and $z\in\So$, we have
 \be\label{closePhin}
 \Phib^n(z)=(n-1)z^{n+1} \binom1{-i} + (n+1) z^{n-1} \binom1i. 
 \ee
 \item For $m\in \Z$ and $w\in\So$, we have
 \be\label{closePhin2}
 \Phib^{2m+1}(\sigma(w))=2\lt[m w^{m+1} \binom1{-i} + (m+1) w^m \binom1i\rt]. 
 \ee
 \end{enumerate}
 \end{lemma}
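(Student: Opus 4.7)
The plan is entirely computational: substitute directly into the definition of $\Ups$ from \eqref{def:Gamma} and convert real trigonometric quantities into complex exponentials, then read off the closed form. There is no genuine obstacle here; the only care needed is keeping the coefficients of $e^{i(n\pm 1)\te}$ straight and, for part (ii), using that $\sigma(w)^2 = w$ to collapse even powers of $z$ into powers of $w$.

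More concretely, for (i) I would take $\lambda = 2i\,e_n$, so that $\lambda(\te) = 2i\,e^{in\te}$ and $\lambda'(\te) = -2n\,e^{in\te}$. By definition of $\Phib^n = \Ups[2ie_n]$ this gives
\[
\Phib^n(e^{i\te}) \;=\; 2i\,e^{in\te}\binom{-\sin\te}{\cos\te} + 2n\,e^{in\te}\binom{\cos\te}{\sin\te}.
\]
Substituting $\cos\te = (e^{i\te}+e^{-i\te})/2$ and $\sin\te = (e^{i\te}-e^{-i\te})/(2i)$ and collecting the coefficients of $e^{i(n+1)\te}$ and $e^{i(n-1)\te}$, the first component becomes $(n-1)e^{i(n+1)\te} + (n+1)e^{i(n-1)\te}$ and the second component becomes $-i(n-1)e^{i(n+1)\te} + i(n+1)e^{i(n-1)\te}$. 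Writing $z = e^{i\te}$ and grouping these as a single vector identity yields precisely \eqref{closePhin}.

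For (ii), I would simply substitute $z = \sigma(w)$ into the formula \eqref{closePhin} with $n = 2m+1$. Then $z^{n+1} = z^{2(m+1)} = (z^2)^{m+1} = w^{m+1}$ and $z^{n-1} = z^{2m} = (z^2)^m = w^m$, where I use the defining property $\sigma(w)^2 = w$. Plugging these in and factoring out the common factor $2$ from the coefficients $(n-1) = 2m$ and $(n+1) = 2(m+1)$ gives \eqref{closePhin2}. Note that the oddness of $n$ is essential here: it is precisely what makes $z^{n\pm 1}$ a polynomial in $z^2 = w$, so that the result is well-defined independently of the branch choice involved in $\sigma$.
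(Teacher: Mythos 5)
Your proof is correct and follows essentially the same computation as the paper: expand $\Ups[2ie_n]$, convert $\sin\te$ and $\cos\te$ to complex exponentials, regroup powers of $e^{i\te}$, and for (ii) substitute $n=2m+1$, $z=\sigma(w)$ and use $\sigma(w)^2=w$. The closing observation that oddness of $n$ makes $z^{n\pm 1}$ a function of $z^2=w$ (hence branch-independent) is a nice touch the paper leaves implicit.
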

 \begin{proof} Let us establish the first point. We write $z=e^{i\te}$. Using $\frac{d}{d\te}e^{in\te}=ine^{in\te}$ and then the formulas  $2i\sin\te=e^{i\te}-e^{-i\te}$, $2\cos\te=e^{i\te}+e^{-i\te}$, we get
 \begin{align*}
 \Phib^n(z)&= e^{in\te} \lt[2i \binom{-\sin\te}{\cos\te}+2n\binom{\cos\te}{\sin\te}\rt]=e^{in\te}\lt[\binom{-e^{i\te}+e^{-i\te}}{ie^{i\te}+ie^{-i\te}}+n\binom{e^{i\te}+e^{-i\te}}{-ie^{i\te}+ie^{-i\te}}\rt]\\
 &= e^{in\te}\binom{(n-1)e^{i\te}+(n+1)e^{-i\te}}{-i(n-1)e^{i\te}+i(n+1)e^{-i\te}} =  e^{in\te}\left[(n-1)e^{i\te} \binom1{-i} +(n+1)e^{-i\te} \binom1i\rt]. 
 \end{align*}
Substituting back $z=e^{i\te}$, $z^n=e^{in\te}$, we obtain~\eqref{closePhin}.  

Identity~\eqref{closePhin2} is obtained by substituting  $n=2m+1$ and $z=\sigma(w)$ in the first one and then using $z^2=w$.
 \end{proof}

%
Let us stress some immediate consequences of formulas~\eqref{closePhin}\&\eqref{closePhin2}.
 \begin{properties}~
 \label{properties_Phin_2}
  \begin{enumerate}[(i)]
  \item
 For $n=0$, we have $\Phib^0(z)=2i \zb^\perp$, so $\mu_{\Phib^0}[u]=2i\curl u$ for $u\in W^{1,1}(\Om,\So)$.
 \item
For $n=\pm 1$, we have 
\[\Phib^{\pm1}(z)=\pm 2 \binom1{\pm i},
\]
and the pair $\Phib^{-1}$, $\Phib^{1}$ spans the space of constant vector fields $\C\to \C^2$.
\item For $n=2$ and $z\in \So$, we have  $\Phib^{\pm2}(z)=6\,\Sigma_2(z)\pm i\,6\,\Sigma_1(z)$
where for $z\in\So$,
\[
\Sigma_1(z):=(z_2(1- (2/3)z_2^2), z_1(1-(2/3)z_1^2)),\qquad \Sigma_2(z):=(2/3)(z_1^3,-z_2^3)
\]
define the Jin--Kohn entropies. 
  \item  
For $n\in\Z$ and $\te\in\R$,  
\be
\label{normPhin}
\lt\|\Phib^n(e^{i\te})\rt\|_{\ell^2(\C^2)}=2\sqrt{n^2+1}\ \quad\text{and}\ \quad \lt\|\frac d{d\te}\lt[\Phib^{n}(e^{i\te})\rt]\rt\|_{\ell^2(\C^2)}\!=2|n^2-1|.
\ee
\end{enumerate}
 \end{properties}
 
 \begin{lemma}
 \label{lem_Phinwedges}
Let $n\in \Z$ and $z,w\in\So$. There holds, 
\begin{align}
\label{Phinwedge_1}
 \Phib^n(z)\we\Phib^{-n}(w) &=2i\lt((n+1)^2 \lt(z\ov w\rt)^{n-1} - (n-1)^2 \lt(z\ov w\rt)^{n+1} \rt),\\
\label{Phinwedge_2}
  \Phib^n(z)\we\Phib^{-n}(z)&=2i \lt((n+1)^2-(n-1)^2\rt)=8in,
 \end{align}
 and
 \begin{multline}
\label{Phinwedge_3}
\lt( \Phib^n(z)-\Phib^n(w)\rt)\we\lt( \Phib^{-n}(z)-\Phib^{-n}(w)\rt)\\
=2i\lt[(n+1)^2 \lt|w^{n-1}-z^{n-1}\rt|^2 - (n-1)^2  \lt|w^{n+1}-z^{n+1}\rt|^2\rt].
 \end{multline}
 \end{lemma}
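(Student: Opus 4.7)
The plan is to reduce everything to direct computation with the closed form \eqref{closePhin} of Lemma~\ref{lem_formulaPhin} together with elementary bilinear algebra on the basis $\{\binom{1}{-i},\binom{1}{i}\}$ of $\C^2$.

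First I would record the identity $\Phib^{-n}(w) = -(n+1)\bar w^{\,n-1}\binom{1}{-i} - (n-1)\bar w^{\,n+1}\binom{1}{i}$, obtained by substituting $-n$ for $n$ in \eqref{closePhin} and using $w^{-k}=\bar w^{k}$ on $\So$. Then I would compute the four base wedge products:
\[
\binom{1}{-i}\we\binom{1}{i}=2i,\quad \binom{1}{i}\we\binom{1}{-i}=-2i,\quad \binom{1}{\pm i}\we\binom{1}{\pm i}=0.
\]
Expanding $\Phib^n(z)\we\Phib^{-n}(w)$ by bilinearity only the cross terms survive, and I get
\[
\Phib^n(z)\we\Phib^{-n}(w)= -2i(n-1)^2(z\bar w)^{n+1} + 2i(n+1)^2(z\bar w)^{n-1},
\]
which is \eqref{Phinwedge_1}.

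Identity \eqref{Phinwedge_2} then follows immediately by setting $w=z$ so that $z\bar w=1$, giving $2i[(n+1)^2-(n-1)^2]=8in$.

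For \eqref{Phinwedge_3} I would expand the product bilinearly:
\[
(\Phib^n(z)-\Phib^n(w))\we(\Phib^{-n}(z)-\Phib^{-n}(w)) = \Phib^n(z)\we\Phib^{-n}(z)+\Phib^n(w)\we\Phib^{-n}(w)-\Phib^n(z)\we\Phib^{-n}(w)-\Phib^n(w)\we\Phib^{-n}(z).
\]
The first two terms contribute $16in$ by \eqref{Phinwedge_2}. For the other two I would use \eqref{Phinwedge_1} together with the key observation that, since $|z|=|w|=1$, $w\bar z=\overline{z\bar w}$ and hence $(z\bar w)^k+(w\bar z)^k = 2-|z^k-w^k|^2$ (because $|z^k-w^k|^2 = 2 - (z\bar w)^k - (w\bar z)^k$ when $|z|=|w|=1$). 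Substituting for $k=n\pm1$ and combining with the $16in$ contribution, the constant terms cancel and I am left with exactly the right-hand side of \eqref{Phinwedge_3}. No step presents a real obstacle; the computation is entirely algebraic, and the only point requiring some care is bookkeeping the signs in the bilinear expansion.
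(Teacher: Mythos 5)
Your proof is correct and essentially identical to the paper's: same use of the closed form \eqref{closePhin} and the base wedge identities \eqref{id_det11ii} for the first two identities, and the same bilinear expansion combined with $(z\bar w)^k+(\bar z w)^k = 2-|z^k-w^k|^2$ on $\So$ for \eqref{Phinwedge_3}.
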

 \begin{proof} The first equality is a direct application of~\eqref{closePhin} and of the identities,
 \be\label{id_det11ii}
 \binom1i\we\binom1i= \binom1{-i}\we\binom1{-i}=0,\quad\binom1{-i}\we\binom1i= -\binom1i\we\binom1{-i}=2i.
  \ee
 We get the second identity by taking $w=z$ in~\eqref{Phinwedge_1}.  For the last one we use the bilinearity of the wedge product and the two firsts to obtain that the left-hand side of~\eqref{Phinwedge_3} is equal to:
 \begin{multline*}
 2i(n+1)^2\lt[2-(\ov z w)^{n-1}-(z\ov w)^{n-1}\rt] - 2i (n-1)^2 \lt[2-(\ov z w)^{n+1}-(z\ov w)^{n+1}\rt]\\
 =2i(n+1)^2 \lt|z^{n-1}- w^{n-1}\rt|^2- 2i(n-1)^2\lt|z^{n+1}- w^{n+1}\rt|^2.
 \end{multline*}
 We used $z,w\in\So$ for the last equality. 
 This proves~\eqref{Phinwedge_3} and the lemma.
 \end{proof}
The next result is the key for passing from weak convergence to strong convergence at the end of the proof of the compactness result. 
\begin{lemma}
\label{lem_dirac}
Let $\nu$ be a Borel probability measure on $\So$. If one of the following assumptions holds true then $\nu$  is a Dirac mass.
\begin{enumerate}[(i)]
\item For every trigonometric entropy $\Phib^n$ with $n=2m$ even,
\[
\int_{\So} \lt[\Phib^{n}\we\Phib^{-n}\rt] d\nu=\lt[\int_{\So} \Phib^{n}\, d\nu\rt]\we \lt[\int_{\So} \Phib^{-n}\, d\nu \rt].
\]
\item For every trigonometric entropy $\Phib^n$ with $n=2m+1$ odd,
\be\label{lem_dirac_0}
\int_{\So} \lt[\Phib^{n}\circ\sigma\we\Phib^{-n}\circ\sigma\rt] d\nu=\lt[\int_{\So} \Phib^{n}\circ\sigma\, d\nu\rt]\we\lt[\int_{\So} \Phib^{-n}\circ\sigma\, d\nu \rt].
\ee
\end{enumerate}
\end{lemma}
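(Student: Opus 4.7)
The plan is to translate each hypothesis into a vanishing double-integral identity on $\So\times\So$ and expand it via the wedge formula~\eqref{Phinwedge_3} to extract a recursion on the Fourier coefficients $\hat\nu(k):=\int_{\So}z^k\,d\nu(z)$. The universal bound $|\hat\nu(k)|\le1$, combined with a telescoping argument, will force $|\hat\nu(1)|=1$; the equality case of the triangle inequality for $\int z\,d\nu$ then immediately yields that $z$ is $\nu$-almost everywhere constant, i.e.\ $\nu$ is a Dirac mass.

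For case~(i), fix an even integer $n=2m$. By bilinearity of $\wedge$ and Fubini, the hypothesis rewrites as
\[
\int\!\!\int_{\So\times\So}\lt(\Phib^n(z)-\Phib^n(w)\rt)\wedge\lt(\Phib^{-n}(z)-\Phib^{-n}(w)\rt)\,d\nu(z)\,d\nu(w)=0.
\]
Substituting~\eqref{Phinwedge_3} and using the elementary identity $\int\!\!\int_{\So\times\So}|z^k-w^k|^2\,d\nu(z)d\nu(w)=2(1-|\hat\nu(k)|^2)$, I obtain $(n+1)^2(1-|\hat\nu(n-1)|^2)=(n-1)^2(1-|\hat\nu(n+1)|^2)$. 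Setting $c_k:=1-|\hat\nu(k)|^2\in[0,1]$ and iterating for $m\ge1$, the recursion $c_{2m+1}=((2m+1)/(2m-1))^2\,c_{2m-1}$ telescopes to $c_{2m+1}=(2m+1)^2 c_1$, which is compatible with $c_{2m+1}\le 1$ for every $m\ge 1$ only if $c_1=0$; hence $|\hat\nu(1)|=1$ and $\nu$ is a Dirac mass.

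For case~(ii), the same scheme applies to Lemma~\ref{lem_Phinwedges}(iii) evaluated at $(\sigma(w),\sigma(w'))$: with $n=2m+1$ odd, the identity $\sigma(z)^2=z$ on $\So$ yields $\sigma(w)^{n-1}=w^m$ and $\sigma(w)^{n+1}=w^{m+1}$, so the analogous computation produces the recursion $(m+1)^2c_m=m^2c_{m+1}$ for $m\ge1$, which telescopes to $c_m=m^2c_1$ and again forces $c_1=0$. The only subtlety to watch is that the base cases $n\in\{-1,0,1\}$ yield trivial identities (consistent with $\Phib^{\pm1}$ being constants), so the useful recursion effectively starts at $m=1$; fortunately, the quadratic growth of the telescoped factor immediately saturates the universal bound $|\hat\nu(\cdot)|\le 1$. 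The rigidity of the argument rests crucially on using the entire family of trigonometric entropies — a single~$n$ would yield only a linear relation between two Fourier coefficients and could not localize $\nu$ at a single point of $\So$.
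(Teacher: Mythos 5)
Your proof is correct and arrives at the same recursion on $1-|\hat\nu(k)|^2$ (with $\hat\nu(k):=\int_{\So} z^k\,d\nu(z)$) as the paper's argument, but it organizes the algebra along a genuinely different path. The paper evaluates the two sides of the hypothesis \emph{separately}: the left-hand side via the diagonal identity~\eqref{Phinwedge_2} (which is a constant), and the right-hand side by substituting the closed-form expression~\eqref{closePhin2} for $\Phib^{\pm n}\circ\sigma$, integrating, and taking the wedge with the help of~\eqref{id_det11ii}. You instead symmetrize the hypothesis into a single vanishing double integral
\[
\int\!\!\int_{\So\times\So}\bigl(\Phib^{n}(\sigma(w))-\Phib^{n}(\sigma(w'))\bigr)\we\bigl(\Phib^{-n}(\sigma(w))-\Phib^{-n}(\sigma(w'))\bigr)\,d\nu(w)\,d\nu(w')=0,
\]
and feed it directly into the off-diagonal identity~\eqref{Phinwedge_3}, using only the elementary fact $\int\!\!\int_{\So\times\So}|z^k-w^k|^2\,d\nu(z)\,d\nu(w)=2\bigl(1-|\hat\nu(k)|^2\bigr)$. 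This buys a computation that never expands $\int\Phib^{\pm n}\circ\sigma\,d\nu$ in coordinates and makes it transparent that only the moduli of the Fourier coefficients enter. Both routes yield the identical recursion $(m+1)^2\bigl(1-|\hat\nu(m)|^2\bigr)=m^2\bigl(1-|\hat\nu(m+1)|^2\bigr)$ (and its even analogue), the same telescoping to $1-|\hat\nu(m)|^2=m^2\bigl(1-|\hat\nu(1)|^2\bigr)$, and the same conclusion from the bound $|\hat\nu(m)|\le1$ together with the vanishing-variance (equality in the triangle inequality) observation.
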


\begin{proof}

\noindent
\emph{Case of assumption~(ii).}   Let $m$ be a positive integer and set $n:= 2m+1$.   Integrating  identity~\eqref{Phinwedge_2} of Lemma~\ref{lem_Phinwedges} with respect to $\nu$ we obtain for the left-hand side of~\eqref{lem_dirac_0},
\be\label{lem_dirac_prf1}
\dfrac1{8i}\int_{\So}\lt[\Phib^n\circ\sigma\we\Phib^{-n}\circ\sigma\rt] d\nu= \dfrac14\lt[(n+1)^2-(n-1)^2\rt]=(m+1)^2-m^2.
\ee
Next, using \eqref{closePhin2} and integrating with respect to $\nu$, we get,
\[
\dfrac{1}{2}\int_{\So} \Phib^n\circ\sigma\, d\nu = m \lt[\int_{\So} w^{m+1} \, d\nu(w)\rt] \binom1{-i} + (m+1) \lt[\int_{\So} w^m \, d\nu(w)\rt] \binom1i.
\]
Denoting by
\[
c_k:=c_k(\nu)=\int_0^{2\pi} e^{-ik\te}\,d\nu(e^{i\te})=\int_{\So} w^{-k }\, d\nu(w),
\] 
the $k^{\text{th}}$ Fourier coefficient of the probability measure  $\nu$, the last identity writes as,
\[
\dfrac{1}{2}\int_{\So} \Phib^n\circ\sigma\, d\nu =m\,c_{-(m+1)} \binom1{-i}+ (m+1)c_{-m} \binom1i,
\]
Using $\Phib^{-n}=\ov{\Phib^n}$ and the identities~\eqref{id_det11ii}, we get, for the right-hand side of~\eqref{lem_dirac_0},
\be\label{lem_dirac_prf2}
\dfrac1{8i}\lt[\int \Phib^n\circ\sigma\, d\nu\rt]\we\lt[\int\Phib^{-n}\circ\sigma\, d\nu\rt]=
m^2 |c_{m+1}|^2 - (m+1)^2|c_m|^2,
\ee
where we used $c_{-k}=\ov{c_k}$ for $k\in\Z$ (because $\nu$ is real valued). 

By assumption the left-hand side of~\eqref{lem_dirac_prf1} and~\eqref{lem_dirac_prf2} are equal. We deduce the relations 
\[
m^2(1- |c_{m+1}|^2) =  (m+1)^2(1-|c_m|^2)\quad\text{ for }m\ge1.
\]
This leads by induction to $1-|c_m|^2=m^2(1-|c_1|^2)$ for $m\ge 1$. Since $\nu$ is a (finite) measure, the sequence $(c_k)$ is bounded and we must have  $|c_1|=|c_1(\nu)|=1$. We conclude  that the probability measure  $\nu$ is a Dirac mass (we can for instance compute the variance $\Var(\nu)=\int |w|^2\, d\nu(w) - \lt|\int w\, d\nu(w)\rt|^2=1-|c_1(\nu)|^2=0$).\medskip

\noindent
\emph{Case of assumption~(i).} Performing the same computations with $n=2m$ in place of $n=2m+1$ and $\Phib^{2m}$ in place of $\Phib^{2m+1}\circ\sigma$ leads to the identities, 
\[1-|c_{2m-1}(\nu)|^2=(2m-1)^2(1-|c_1(\nu)|^2)\quad\text{ for }m\ge 1.\]
We conclude again that $|c_1(\nu)|=1$ and then that  $\nu$ is a Dirac mass.
\end{proof}

\section{Compactness}\label{Scomp}
In this section we prove  Theorem \ref{theo:introcomp}. As explained in the introduction, we first use Lemma \ref{lem_DPhi} to prove that the energy controls the entropy production (see \eqref{eq:entropyprodintro}). We will actually prove a slightly stronger statement which gives a more explicit control in terms of $1-|v|$, $\nabla v$ and $\rot v$ of the entropy production.\\
Fix $\chi\in C^{\infty}_c( (0,+\infty),[0,1])$ with $\chi(1)=1$ and  $\chi\ge 1/2$ on $[1/\sqrt{2},\sqrt{2}]$. For $\lambda_0,\lambda_1\ge 0$ with $\max(\lambda_0,\lambda_1)\ge1$ we define 
\begin{equation}\label{defQR}\left.\begin{array}{rl}
Q(v)&:=\|\chi(|v|)\,(1-|v|)\,\nb v\|_{1} + \lambda_0 \|\rot v\|_{1} +\lambda_1^{1/2}\|\chi(|v|)\,\nb v\|_{2} \,\|\rot v\|_{H^{-1}(\Om)},\smallskip\\
R(v)&:=\|\chi(|v|)\,(1-|v|)\|_{2}+ \lambda_1^{1/2}\|\rot v\|_{H^{-1}(\Om)}.
\end{array} \right.\end{equation}

\begin{proposition}
\label{prop_muPhiH-1}
Let $\Omega\subset \R^2$ be an  open set. Then, for every $\Phib\in \entev$ there exists $C=C(\Phib, \chi)>0$  such that 
 for every $v\in W^{1,1}(\Omega,\C)$ and every $\zeta\in C^1_c(\Omega,\C)$,
\begin{align}
\label{estim_H-1_muPhi}
\lt|\int_{\Omega}\mus_{\Phib}[v]\,\zeta\, \rt| &
\leq C\lt( Q(v)\|\zeta\|_\oo + R(v) \|\nb \zeta\|_{2}\rt).
\end{align}
\end{proposition}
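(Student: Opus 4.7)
The plan is to start from the decomposition provided by Lemma~\ref{lem_DPhi}(ii): for $\Phib\in\entev$, there exist $\Psis, \alfs \in C^\infty_c(\C,\C^2)$ supported in the annulus $B_{\sqrt{2}}\sm \ov{B_{1/\sqrt{2}}}$ such that
\[
\mus_\Phib[v] = \Psis(v)\cd \nb(1-|v|) + \alfs(v)\cd \rot v \qquad \text{in } L^1\loc(\Om).
\]
The key observation is that by our choice of $\chi$, $\chi(|v|)\ge 1/2$ on the annulus where $\Psis(v),\alfs(v)$ and their derivatives are nonzero, so an extra factor $\chi(|v|)$ can be inserted wherever needed at the cost of a harmless constant. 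I would then bound the two pieces of the decomposition against $\zeta$ separately, reading off the terms of $Q(v)\|\zeta\|_\infty$ and $R(v)\|\nb\zeta\|_2$ in turn.

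For the first piece, I would integrate by parts (legitimate since $\zeta\in C^1_c(\Om)$) to obtain
\[
\int_\Om \Psis(v)\cd \nb(1-|v|)\,\zeta = -\int_\Om (1-|v|)\,[D\Psis(v)\nb v]\,\zeta - \int_\Om (1-|v|)\,\Psis(v)\cd \nb \zeta.
\]
Using the bounds $|D\Psis(v)\nb v|\le C_\Phib\,\chi(|v|)|\nb v|$ and $|\Psis(v)|\le C_\Phib\,\chi(|v|)$ together with Cauchy--Schwarz, these two terms are controlled respectively by $C_\Phib\|\chi(|v|)(1-|v|)\nb v\|_1\|\zeta\|_\infty$ and $C_\Phib\|\chi(|v|)(1-|v|)\|_2\|\nb \zeta\|_2$, which match exactly the first terms of $Q(v)\|\zeta\|_\infty$ and $R(v)\|\nb\zeta\|_2$ respectively.

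For the $\rot v$-piece I would use two complementary estimates. The pointwise bound immediately yields
\[
\lt|\int_\Om \alfs(v)\cd\rot v\,\zeta\rt| \le C_\Phib\,\|\rot v\|_1 \|\zeta\|_\infty.
\]
Alternatively, since $\zeta$ has compact support in $\Om$, the function $\alfs(v)\zeta$ is an admissible test in the $H^{-1}(\Om)$ duality. Using $\nb[\alfs(v)\zeta]=[D\alfs(v)\nb v]\zeta + \alfs(v)\nb \zeta$ and the same support argument gives
\[
\lt|\int_\Om \alfs(v)\cd\rot v\,\zeta\rt| \le C_\Phib\,\|\rot v\|_{H^{-1}(\Om)}\lt(\|\chi(|v|)\nb v\|_2\|\zeta\|_\infty + \|\nb\zeta\|_2\rt).
\]

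To conclude, since by assumption $\max(\lambda_0,\lambda_1)\ge 1$, at least one of the two bounds above is directly usable: if $\lambda_0\ge 1$, the first gives $\le\lambda_0\|\rot v\|_1 \|\zeta\|_\infty$, contributing to $Q(v)\|\zeta\|_\infty$; if $\lambda_1\ge 1$, the second contributes $\lambda_1^{1/2}\|\chi(|v|)\nb v\|_2\|\rot v\|_{H^{-1}(\Om)}$ to $Q(v)\|\zeta\|_\infty$ and $\lambda_1^{1/2}\|\rot v\|_{H^{-1}(\Om)}$ to $R(v)\|\nb\zeta\|_2$. Summing all contributions yields~\eqref{estim_H-1_muPhi}. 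The only mildly delicate point will be handling the $H^{-1}(\Om)$ pairing on possibly unbounded $\Om$: it is resolved by testing only against the compactly supported function $\alfs(v)\zeta$ and working with the seminorm $\|\nb\cdot\|_2$ on $H^1_0(\Om)$, so no low-order $L^2$ term on $\zeta$ ever appears.
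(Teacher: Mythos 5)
Your proposal is correct and follows essentially the same route as the paper: you invoke the decomposition of Lemma~\ref{lem_DPhi}(ii), integrate by parts on the $\Psis(v)\cd\nb(1-|v|)$ term (the paper splits this slightly differently into $\ncd[\Phib(\sigma(v))-(1-|v|)\Psis(v)]$ plus $\ncd[(1-|v|)\Psis(v)]$, but this is merely a cosmetic reorganization of the same Leibniz-rule computation), and give the same pair of complementary bounds for the $\alfs(v)\cd\rot v$ term (pointwise $L^1$-$L^\infty$ versus $H^{-1}$-$H^1_0$ duality), concluding with the observation that $\max(\lambda_0,\lambda_1)\ge 1$ makes at least one contribution usable. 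Your closing remark about handling the $H^{-1}$ pairing via the compactly supported test $\alfs(v)\zeta$ is a reasonable clarification that is implicit in the paper.
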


\begin{proof}
 It is enough to prove the claim for either $(\lambda_0,\lambda_1)=(1,0)$ or $(\lambda_0,\lambda_1)=(0,1)$. Let $\Psis$, $\alfs$ denote the functions given by Lemma~\ref{lem_DPhi}~(ii) with the entropy $\Phib$. Applying~\eqref{id_muPsialpha} to $v\in W^{1,1}(\Omega,\C)$, we write
\begin{align*}
\ncd\lt[\Phib(\sigma(v))\rt] & = \ncd\lt[\Phib(\sigma(v)) - (1-|v|)\Psis(v)\rt] + \ncd\lt[(1-|v|)\Psis(v)\rt] \\
 &=-(1-|v|) D\Psis(v) D v + \alfs(v)\cd\rot v+ \ncd\lt[ (1-|v|)\Psis(v)\rt]\\
& =:\, f_1 + f_2 + f_3.
\end{align*}
Let $\zeta\in C^1_c(\Omega,\C)$. We estimate successively the terms $\lt|\int f_j\zeta\,\rt|$ for  $j=1,2,3$. For $f_2$, we consider two different bounds.  

\noindent
{\bf (1)} Since $D\Psis$ is bounded and supported in $B_{\sqrt{2}}\sm B_{1/\sqrt{2}}$, 
\[
\lt|\int_\Omega f_1\zeta\,\rt|\le C\lt\|\chi(|v|)(1-|v|)\nb v\rt\|_{1}\|\zeta\|_\oo.
\]
{\bf (2)} Next, since $\alfs$ is bounded, we have on the one hand the bound 
\[
\lt|\int_{\Omega} f_2\zeta\,\rt| \leq C \|\rot v\|_{1}  \|\zeta\|_\oo.
\]
On the other hand, by definition of the $\|\cd\|_{H^{-1}}$ norm, we also have 
\begin{align*}
\lt|\int_{\Omega} f_2\zeta\,\rt|&\le \|\rot v\|_{H^{-1}(\Omega)} \lt(\int_\Omega|\nb[\alfs(v)\zeta]|^2\,\rt)^{1/2}\\
&\le \|\rot v\|_{H^{-1}(\Omega)} \lt[\lt(\int_\Omega |D\alfs(v)Dv|^2\,\rt)^{1/2} \|\zeta\|_\oo  + \|\alfs(v)\|_\oo \|\nb \zeta\|_{2}\rt].
\end{align*}
As for  $\Psis$, the function $\alfs$ is bounded and supported in $B_{\sqrt{2}}\sm B_{1/\sqrt{2}}$ and thus
\[
\lt|\int_{\Omega} f_2\zeta\,\rt|\le C \|\rot v\|_{H^{-1}(\Om)} \Big[\lt\|\chi(|v|)\nb v\rt\|_{2} \|\zeta\|_\oo + \|\nb \zeta\|_{2}\Big].
\]
{\bf (3)} In the last term, we integrate by parts and use the Cauchy-Schwarz inequality to get 
\[
\lt|\int_{\Omega} f_3\zeta\,\rt|=\lt|\int_\Omega (1-|v|)\Psis(v)\cd\nb\zeta\rt|\le C \lt\| \chi(|v|)(1-|v|)\rt\|_{2}\|\nb \zeta\|_{2}.
\]
Summing the estimates for $j=1,2,3$, we conclude the proof of~\eqref{estim_H-1_muPhi}.
\end{proof}

We may now prove the main compactness result.
\begin{proposition}
\label{thm_comp}
Let $\Omega\subset \R^2$ be an  open set of finite area  and  $(v_k)\sub L^1(\Om,\C)$. 
\item Assume that:
\begin{enumerate}[(i)]
\item  $(|v_k|)$ converges to 1 in $L^{1}(\Om)$,
\item for every  $\Phi\in \entev$ and every $k\ge1$, there exists $C\ge 0$ and $\eta_k \dw 0$ such that   we can write
\[\lt|\int_\Omega \mus_{\Phib}[v_k] \zeta \rt|\le C\|\zeta\|_\infty +\eta_k \|\nabla \zeta\|_2, \quad \text{for every }\zeta\in C^1_c(\Omega,\C).
\]
\end{enumerate}
Then, up to extraction $v_k\to v$ in $L^1(\Om)$, for some  $v\in L^1(\Omega,\So)$. 
\end{proposition}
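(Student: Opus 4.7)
The plan is to transfer hypothesis~(ii) into $H^{-1}\loc$ compactness of the entropy productions via Murat's lemma, apply the div--curl lemma pairwise with even entropies, and then invoke Lemma~\ref{lem_dirac}(ii) to conclude that the Young measure generated by $(v_k)$ is a Dirac mass almost everywhere, yielding strong $L^1$ convergence. This follows the general scheme of~\cite{DKMO01}, the novelty being that only \emph{even} entropies are available and we must rely on the odd trigonometric family $\{\Phib^{2m+1}\}$ to conclude.

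First, since $|\Om|<\oo$ and $|v_k|\to 1$ in $L^1$, the sequence $(v_k)$ is uniformly integrable, so up to extraction it generates a Young measure $(\nu_x)_{x\in\Om}$; testing against the bounded continuous function $z\mapsto \min(||z|-1|,1)$ yields that $\nu_x$ is supported in $\So$ for almost every $x$. Next, for any $\Phib\in\entev$, the cutoff $\chi_0$ in the extension of $\Phib$ to $\C$ makes $\Phib\circ\sigma$ bounded, so $\Phib(\sigma(v_k))$ is uniformly bounded in $L^\oo(\Om,\C^2)$ and hence $\mus_\Phib[v_k]=\ncd[\Phib(\sigma(v_k))]$ is bounded in $W^{-1,\oo}\loc(\Om)$. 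Hypothesis~(ii) gives by Hahn--Banach a decomposition $\mus_\Phib[v_k]=\mu_k^1+\mu_k^2$ with $(\mu_k^1)$ bounded in the space of local Radon measures and $\mu_k^2\to 0$ in $H^{-1}\loc(\Om)$. Murat's compactness lemma therefore yields that $(\mus_\Phib[v_k])$ is relatively compact in $H^{-1}\loc(\Om)$.

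For any pair of even entropies $\Phib^a,\Phib^b$, I set $G_k:=\Phib^a(\sigma(v_k))$ and $H_k:=(\Phib^b_2,-\Phib^b_1)\circ\sigma(v_k)$, both uniformly bounded in $L^\oo$ and satisfying $G_k\cdot H_k=\Phib^a(\sigma(v_k))\we\Phib^b(\sigma(v_k))$. Then $\ncd G_k=\mus_{\Phib^a}[v_k]$ and $\curl H_k=-\mus_{\Phib^b}[v_k]$ are relatively compact in $H^{-1}\loc$ by the previous step, so the div--curl lemma of Murat--Tartar gives convergence in $\mc{D}'(\Om)$ of $G_k\cdot H_k$ to the wedge of the weak-$\star$ $L^\oo$ limits. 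Identifying these limits via the Young measure yields, for a.e.\ $x\in\Om$,
\[
\int_{\So}\Phib^a(\sigma(z))\we\Phib^b(\sigma(z))\,d\nu_x(z)=\lt[\int_{\So}\Phib^a(\sigma(z))\,d\nu_x(z)\rt]\we\lt[\int_{\So}\Phib^b(\sigma(z))\,d\nu_x(z)\rt].
\]
Specializing to the pairs $(\Phib^n,\Phib^{-n})$ with $n$ odd, Lemma~\ref{lem_dirac}(ii) forces $\nu_x=\dl_{v(x)}$ for some measurable $v:\Om\to\So$. Convergence of $(v_k)$ to $v$ in measure follows, and uniform integrability upgrades this to $L^1(\Om,\C)$.

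The main obstacle is ensuring that the restricted class $\entev$ is still rich enough to concentrate the Young measure: the classical oriented arguments use non-even entropies (Jin--Kohn or the indicator-type entropies~\eqref{Phixi}) which are unavailable in the unoriented setting. The role of the trigonometric family is precisely to circumvent this---the wedge identities of Lemma~\ref{lem_Phinwedges} translate the div--curl commutation into a Fourier-coefficient computation forcing $|c_1(\nu_x)|=1$, which is the core of Lemma~\ref{lem_dirac}(ii).
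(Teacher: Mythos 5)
Your proof follows essentially the same approach as the paper: Young measure extraction, $H^{-1}\loc$ compactness of entropy productions via (a variant of) Murat's lemma, div--curl to obtain the wedge-product commutation, and Lemma~\ref{lem_dirac}(ii) to force the Young measure to be Dirac. The paper goes through the same three steps, differing only in presentation (it cites a lemma of Murat directly rather than spelling out the Hahn--Banach decomposition, and uses $[\Phib^{-n}(\sigma(v_k))]^\perp$ where you use $(\Phib^b_2,-\Phib^b_1)\circ\sigma(v_k)$, which is the same up to sign).
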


\begin{proof}

\noindent
\emph{Step 1. Convergence of $(v_k)$ as a sequence of Young measures.}

 Up to extraction, there exists a  positive Radon measure $\gamma$ over $\Omega\times \C$ such that for every $\vhi\in C_c(\Omega\times\C,\R)$, 
\be\label{conv_YM}
\int_\Omega \vhi(x,v_k(x))\,dx\ \st{k\up\oo}\longto\  \int_{\Om\times\C} \vhi\,d\gamma.
\ee
Moreover, $\gamma$ disintegrates as $\gamma=\nu_x\otimes \mathcal{L}^2$ where for almost every $x\in\Om$, $\nu_x$ is a positive Radon measure on $\C$. By assumption (i)  and the fundamental theorem on Young measures, see e.g. \cite[Theorem 3.1, (iii)\&(iv)]{MullerLecture}, $\nu_x$ is a probability measure supported in $\So$. Moreover, in order to prove the strong $L^1$ convergence of $(v_k)$, it is enough to establish
\be\label{nux_delta_mass}
\nu_x\text{  is a Dirac mass for almost every }x\in\Om.
\ee
This follows from assumption (i) again, \cite[Corollary 3.2]{MullerLecture} and Vitali convergence Theorem. Let us now establish~\eqref{nux_delta_mass}.

\medskip

\noindent
\emph{Step 2. $H^{-1}\loc$ compactness of the sequences of entropy productions.}
We observe that from assumption~(ii) of the theorem and a simple variant of a lemma by Murat~\cite{Murat81} (see also~\cite[Lemma~28]{Tartar79} and \cite[Lemma~6]{DKMO01}) applied to the (uniformly bounded) 
sequence of mappings $\Phib(\sigma(v_k))$,  
for any $\Phib\in\entev$,  the sequence $(\mus_{\Phib}[v_k])$  is relatively compact in $H^{-1}\loc(\Omega)$.
\medskip

\noindent
\emph{Step 3.  End of the proof. } 

Let $n=2m+1$ and $\Phib^n$ be the corresponding trigonometric entropy.
By Step 2, the sequences with terms $\mus_{\Phib^{\pm n}}[v_k]=\ncd\lt[\Phib^{\pm n}(\sigma(v_k))\rt]$ are relatively compact in $H^{-1}\loc(\Omega)$ and we can apply the div-curl lemma of Murat and Tartar~\cite{Murat78,Tartar79} to the pair of sequences, 
\[\lt(\Phib^{n}(\sigma(v_k))\rt),\qquad \lt(\lt[\Phib^{-n}(\sigma(v_k))\rt]^\perp\rt),
\] to get 
\[
\lim_{k\up\oo}\Phib^{n}(\sigma(v_k)) \we \Phib^{-n}(\sigma(v_k))=\lt[\lim_{k\up\oo} \Phib^{n}(\sigma(v_k))\rt]\we
\lt[\lim_{k\up\oo} \Phib^{-n}(\sigma(v_k))\rt],
\]
where the above limits are weak limits in $L^2\loc(\Om)$. Rewriting this identity with the limit Young measure, we obtain, for almost every $x\in\Om$,
\[ 
 \int_{\So} \lt[\Phib^{n}\circ\sigma\we\Phib^{-n}\circ\sigma\rt]d\nu_x = \lt[  \int_{\So} \Phib^{n}\circ\sigma\,d\nu_x\rt]\we\lt[\int_{\So} \Phib^{-n}\circ\sigma\,d\nu_x \rt].
\]
This identity holds true almost everywhere for any fixed $n$. Since we consider a countable number of entropies, it holds true for any odd $n$ on a set of full measure. For every $x$ in this set, we can apply Lemma~\ref{lem_dirac_0}~(ii) to the probability $\nu_x$ and deduce that it is a Dirac mass, that is~\eqref{nux_delta_mass}. This concludes the proof.
\end{proof}

As a direct consequence of Proposition \ref{prop_muPhiH-1} and Proposition \ref{thm_comp} we have the following compactness result.

\begin{corollary}
\label{coro_comp_1}
Let $\Om\sub \R^2$ be an open set of finite area and let $(v_k)\sub W^{1,1}(\Om,\C)$.
Assume that: 
\begin{enumerate}[(a)]
\item  $(|v_k|)$ converges to 1 in $L^{1}(\Om)$,
\item  $(R(v_k))$ converges to 0 and $(Q(v_k))$ is bounded.
\end{enumerate}
Then, up to extraction,
\begin{enumerate}[(i)] 
\item  $v_k\to v$ in $L^1(\Om)$, for some $v\in L^1(\Om,\So)$.
\item
The distributions $\mus_\Phib[v]$ are Radon measure and  there exists $C=C(\Phib)\ge 0$ such that 
\be\label{control_mu_Phi0}
|\mus_\Phib[v]|(\Om) \leq\, C\,  \liminf_{k\up\oo} Q(v_{k}).
\ee
\end{enumerate}
\end{corollary}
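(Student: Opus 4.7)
The plan is to combine the entropy production estimate of Proposition~\ref{prop_muPhiH-1} with the abstract compactness result Proposition~\ref{thm_comp}, and then to pass to the limit in the resulting dual estimate to identify the entropy productions of the limit $v$ as Radon measures via Riesz representation.

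First, I would apply Proposition~\ref{prop_muPhiH-1} to each $v_k$: for every $\Phib \in \entev$ there exists $C = C(\Phib, \chi) > 0$ such that, for all $\zeta \in C^1_c(\Omega, \C)$,
\[
\lt|\int_\Omega \mus_{\Phib}[v_k]\,\zeta \rt| \le C\lt(Q(v_k)\|\zeta\|_\infty + R(v_k)\|\nb \zeta\|_2\rt).
\]
Setting $M := C \sup_k Q(v_k) < \oo$ (which is finite by~(b)) and $\eta_k := C R(v_k) \to 0$, this precisely verifies hypothesis~(ii) of Proposition~\ref{thm_comp}. Together with~(a), which is exactly hypothesis~(i) there, Proposition~\ref{thm_comp} yields conclusion~(i) of the corollary: up to extraction, $v_k \to v$ in $L^1(\Omega)$ for some $v \in L^1(\Omega, \So)$.

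For conclusion~(ii), fix $\Phib \in \entev$ and pass to the limit $k \up \oo$ in the dual estimate above. Up to a further extraction, $v_k \to v$ pointwise almost everywhere, so by continuity and boundedness of $\Phib \circ \sigma$ (recall $\Phib$ is compactly supported in the annulus $B_2\sm\ov B_{1/2}$ by construction) and since $|\Omega| < \oo$, dominated convergence gives $\Phib(\sigma(v_k)) \to \Phib(\sigma(v))$ in $L^p(\Omega)$ for every $p < \oo$. For fixed $\zeta \in C^1_c(\Omega, \C)$ this implies
\[
\int_\Omega \mus_{\Phib}[v_k]\,\zeta = -\int_\Omega \Phib(\sigma(v_k)) \cd \nb \zeta \,\longto\, -\int_\Omega \Phib(\sigma(v)) \cd \nb \zeta = \lb \mus_{\Phib}[v], \zeta \rb.
\]
Using $R(v_k) \to 0$ and passing to the liminf in $k$ then yields
\[
\lt|\lb \mus_{\Phib}[v], \zeta \rb\rt| \le C\,\liminf_{k \up \oo} Q(v_k)\,\|\zeta\|_\infty, \qquad \zeta \in C^1_c(\Omega, \C).
\]
By density of $C^1_c(\Omega,\C)$ in $C_c(\Omega,\C)$ for the uniform norm, this bound extends to every $\zeta \in C_c(\Omega,\C)$, and the Riesz representation theorem identifies $\mus_{\Phib}[v]$ with a complex Radon measure whose total variation satisfies exactly~\eqref{control_mu_Phi0}.

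The proof is essentially bookkeeping, since Propositions~\ref{prop_muPhiH-1} and~\ref{thm_comp} were engineered to dovetail into this statement. The only mildly delicate point is the vanishing of the $\|\nb \zeta\|_2$ contribution in the limit $k\up\oo$: this is precisely where the stronger hypothesis $R(v_k) \to 0$ (rather than mere boundedness of $R(v_k)$) is invoked, leaving a uniform bound in $\|\zeta\|_\infty$ alone which is what is needed for Riesz representation on $C_c(\Omega)$.
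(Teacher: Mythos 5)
Your proof is correct and follows exactly the route the paper has in mind: Proposition~\ref{prop_muPhiH-1} verifies the hypotheses of Proposition~\ref{thm_comp}, giving~(i); and for~(ii) you pass to the limit in the dual estimate using that $\Phib\circ\sigma$ is continuous and bounded (so that, along an a.e.-convergent subsequence, $\Phib(\sigma(v_k))\to\Phib(\sigma(v))$ in $L^1$), that $R(v_k)\to0$ kills the $\|\nb\zeta\|_2$ term, and then conclude by density plus Riesz representation. The paper simply calls this a ``direct consequence'' and does not write out part~(ii); your bookkeeping is exactly what that phrase covers, so there is nothing genuinely different to compare.
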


Recalling the definition \eqref{Eseps0} of $\Es_\eps$ we finally obtain the anticipated generalization of Theorem \ref{theo:introcomp}.
\begin{theorem}\label{thm:compmain}
 Assume that hypothesis \eqref{conditionsgw} holds. Let then $\Omega$ be an open set of finite measure and $\eps_k\dw 0$.
 If $(v_k)$ satisfies $\sup_k \Es_{\eps_k}(v_k)<+\infty$ then there exists $v\in L^1(\Omega,\So)\cap \As(\Om)$ such that up to extraction, $v_k\to v$ in $L^{1}$. Moreover, for every $\Phib\in\entev$, there exists $C=C(\Phib,\kappa)\ge 0$ such that 
 \[
  |\mus_\Phib[v]|(\Om) \leq\, C\,  \liminf_{k\up\oo} \Es_{\eps_k}(v_k).
 \]
\end{theorem}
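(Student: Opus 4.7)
The plan is to reduce Theorem~\ref{thm:compmain} to Corollary~\ref{coro_comp_1} applied to $(v_k)$, after an appropriate choice of the parameters $\lambda_0,\lambda_1\in[0,+\infty)$ with $\max(\lambda_0,\lambda_1)\ge 1$ entering the definition~\eqref{defQR} of $Q,R$. The two hypotheses of the corollary to verify are (a)~$|v_k|\to 1$ in $L^1$, and (b)~$Q(v_k)$ bounded with $R(v_k)\to 0$. Set $E:=\sup_k\Es_{\eps_k}(v_k)<+\infty$.

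For (a), the coercivity of $W$ in~\eqref{conditionsgw} together with $\int_\Om W(v_k)\le \eps_k E$ gives $\int_\Om\min((1-|v_k|)^2,|1-|v_k||)\to 0$.  Splitting $\Om$ according to whether $||v_k|-1|\le 1$ or not (so that the $\min$ equals $(1-|v_k|)^2$ or $|1-|v_k||$ respectively) and using $|\Om|<\infty$, we obtain $|v_k|\to 1$ in $L^1(\Om)$.

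For (b), we exploit the coercivities in~\eqref{conditionsgw}.  We may assume (up to a harmless localization of $\chi$ in~\eqref{defQR}, which only affects the constants $C(\Phib,\chi)$ of Proposition~\ref{prop_muPhiH-1}) that $\supp\chi$ is contained in the neighborhood of $1$ on which $g\ge \kappa$ on $\{v:|v|\in\supp\chi\}$.  Since $|v|$ is then bounded on $\{|v|\in\supp\chi\}$, we have the pointwise inequalities $\chi(|v|)|\nabla v|^2\le (1/\kappa)\,g(v)|\nabla v|^2$ and $\chi(|v|)(1-|v|)^2\le (C_\chi/\kappa)\,W(v)$, which integrate to
\[
\|\chi(|v_k|)\nabla v_k\|_2^2\le \frac{\Es_{\eps_k}(v_k)}{\kappa\eps_k},\qquad \|\chi(|v_k|)(1-|v_k|)\|_2^2\le \frac{C_\chi\,\eps_k\,\Es_{\eps_k}(v_k)}{\kappa}.
\]
By Cauchy--Schwarz, the first term of $Q(v_k)$ is then bounded by $(\sqrt{C_\chi}/\kappa)\,E$. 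For the $\rot$-contribution, we pass to a subsequence so that either $\lambda^0_{\eps_k}\ge\kappa$ for every $k$ or $\lambda^1_{\eps_k}\ge\kappa$ for every $k$ (which is allowed by the third line of~\eqref{conditionsgw}). In the first case take $(\lambda_0,\lambda_1)=(1,0)$: then $\|\rot v_k\|_1\le E/\kappa$ controls the second term of $Q$ while the third one vanishes. In the second case take $(\lambda_0,\lambda_1)=(0,1)$: then $\|\rot v_k\|_{H^{-1}(\Om)}^2\le \eps_k E/\kappa$, and another Cauchy--Schwarz using the first display bounds the third term of $Q$ by $E/\kappa$.  In both situations, $R(v_k)\le C\sqrt{\eps_k E/\kappa}\longto 0$.

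Corollary~\ref{coro_comp_1} then produces, up to a further extraction, a limit $v\in L^1(\Om,\So)$ such that $v_k\to v$ in $L^1(\Om)$, together with, for every $\Phib\in\entev$, the Radon measure estimate
\[
|\mus_\Phib[v]|(\Om)\le C\liminf_{k\up\infty}Q(v_k)\le \frac{C}{\kappa}\liminf_{k\up\infty}\Es_{\eps_k}(v_k),
\]
which in particular places $v\in\As(\Om)$.  The main subtlety is in the third paragraph: one must match the various terms of $\Es_{\eps_k}$ with the pieces of $Q,R$ through the localization via $\chi$ and the case split on $\lambda^0_{\eps_k},\lambda^1_{\eps_k}$.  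Once this bookkeeping is carried out, both the compactness and the quantitative bound on the entropy productions follow directly from the already-established corollary.
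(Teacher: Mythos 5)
Your proof is correct and takes essentially the same route as the paper: reduce to Corollary~\ref{coro_comp_1} by bounding $Q(v_k)\lesssim\Es_{\eps_k}(v_k)/\kappa$ and $R(v_k)\lesssim\sqrt{\eps_k\Es_{\eps_k}(v_k)/\kappa}$ via the coercivity conditions~\eqref{conditionsgw} and Cauchy--Schwarz. Two places where you are actually more explicit than the paper are worth noting. First, passing to a subsequence on which one of $\lambda^0_{\eps_k},\lambda^1_{\eps_k}$ stays $\ge\kappa$ before fixing $(\lambda_0,\lambda_1)\in\{(1,0),(0,1)\}$ closes a genuine gap: the paper asserts $Q(v)\le C\Es_\eps(v)$ directly from a Young-inequality display involving $\lambda^0_\eps,\lambda^1_\eps$, but that step requires a dichotomy exactly of the kind you carry out, since neither $\lambda^0_\eps$ nor $\lambda^1_\eps$ need stay bounded below individually. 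Second, regarding the "harmless localization of $\chi$": shrinking $\chi$ alone is \emph{not} enough, because the proof of Proposition~\ref{prop_muPhiH-1} needs $\chi$ bounded below on $[1/\sqrt{2},\sqrt{2}]$, the annulus on which $\Psis,\alfs$ from Lemma~\ref{lem_DPhi} live, and that annulus is tied to the fixed cutoff $\chi_0$ used to extend entropies off $\So$. A complete fix shrinks $\chi_0$ (hence $\Psis,\alfs$ and then $\chi$) simultaneously; this leaves $\mus_\Phib[v]$ unchanged because the limit $v$ is $\So$-valued, so the conclusion is unaffected. That omission is shared by the paper, so it is not a defect of your argument relative to the source, but it deserves a sentence rather than the word ``harmless''.
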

\begin{proof}
 We first notice that by definition of $\Es_\eps$,
 and 
\begin{equation}\label{eq:firstboundEs}
 \int_\Om W(v)+\lambda_\eps^1\|\rot v\|^2_{H^{-1}(\Omega)}\le  \eps \Es_\eps(v).
\end{equation}
Using \eqref{conditionsgw} for $W$, this yields in particular
 \[
  \int_{\Omega} \min ((1-|v|)^2,|1-|v||) \le \eps \Es_\eps(v)/\kappa.
 \]
Since $\Omega$ is assumed to have finite area, we get first that if $ \Es_{\eps_k}(v_k)$ is bounded then $(|v_k|)$ converges to $1$ in $L^1(\Omega)$. Moreover, recalling the definition \eqref{defQR} of $R(v)$, \eqref{eq:firstboundEs} also gives
\begin{equation}\label{eq:boundR}
 R(v)\le \lt(\eps \Es_\eps(v)/\kappa\rt)^{1/2}.
\end{equation}
Next, using Young's inequality we find
\begin{multline*}
 \int_{\Om} g^{1/2}(v)W^{1/2}(v) |\nabla v|+\lambda_\eps^0\int_{\Omega} |\rot v| +\lt(\lambda_\eps^1 \|\rot v\|^2_{H^{-1}(\Omega)}\rt)^{1/2}\lt(\int_\Omega g(v) |\nabla v|^2\rt)^{1/2}\\
 \le C \Es_\eps(v).
\end{multline*}
Using again \eqref{conditionsgw} and recalling the definition \eqref{defQR} of $Q$ we get
\begin{equation}\label{eq:boundQ}
 Q(v)\le C \Es_\eps(v).
\end{equation}
From \eqref{eq:boundR} and \eqref{eq:boundQ} we conclude that if $\Es_{\eps_k}(v_k)$ is bounded then $Q(v_k)$ is also bounded and $R(v_k)$ goes to zero. We may therefore apply Corollary \ref{coro_comp_1} to conclude.
\end{proof}

\begin{remark}
 Let us point out that inserting \eqref{eq:boundR}\&\eqref{eq:boundQ} in  \eqref{estim_H-1_muPhi} from Proposition \ref{prop_muPhiH-1} we obtain \eqref{eq:entropyprodintro}, \textit{i.e.}
 \[
  \lt|\int_\Omega \mus_{\Phib}[v] \zeta\rt|\le C(\kappa)\lt(\Es\e(v)\|\zeta\|_\infty + \eps^{1/2}\Es\e^{1/2}(v)\|\nabla \zeta\|_2\rt).
 \]
\end{remark}


\section{Structure of zero-states}
\label{Sstructure_zero_states}
In this section we prove Theorem \ref{thm_0nrjw}. We first establish a regularity result for zero-states. We recall the notation
\[
 D_hf(x)=f(x+h)-f(x).
\]

\label{SS_lem_reg}
\begin{proposition}
\label{lem_reg}
Let $\Om$ be an open set and  $v\in\As_0(\Om)$ (see Definition \ref{def_entrop}). Then,
\begin{enumerate}[(i)]
\item \label{lem_reg_1p}
$v\in W^{1,3/2}\loc(\Om)$. 
\item  \label{lem_reg_rot=0}
$\rot v= 0$ in $\Om$.
\item \label{lem_reg_GL_estim} For every open set $\om\sub\sub\Om$, there exists $C=C(\om,\Om)\ge0$ such that for every $h\in\R^2$ with 
$\ds|h|\le \min\lt(1/2,\dist(\om,\R^2\sm\Om)/2\rt)$,
\be\label{lem_reg_estim}
\int_{\om} |D_h v|^2  \le C |h|^2 \ln(1/|h|).
\ee
\end{enumerate}
\end{proposition}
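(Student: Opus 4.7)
The plan is to adapt the strategy of Lorent--Peng~\cite{LP18} by replacing their two Jin--Kohn entropies with the full family of \emph{odd} trigonometric entropies $\{\Phib^{2m+1}\}_{m\ge 0}$. As explained in the introduction, using the single pair $\Phib^{\pm 3}$ only yields the Besov-type bound $v\in B^{1/3}_{4,\infty,\loc}$; summing over $m$ with carefully chosen weights promotes this to the quantitative difference-quotient estimate~\eqref{lem_reg_estim}.

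The starting point is Lemma~\ref{lem_Phinwedges}(iii) applied with $n=2m+1$ and $z=\sigma(v(x))$, $w=\sigma(v(x+h))$. Since $v\in\As_0\sub\As$ forces $|v|=1$, one has $\sigma(v)\in\So$ with $\sigma(v)^{2m}=v^m$ and $\sigma(v)^{2m+2}=v^{m+1}$, so the identity reads
\[
(m+1)^2 a_m - m^2 a_{m+1} \;=\; \tfrac{1}{8i}\,\bigl(D_h[\Phib^{2m+1}(\sigma(v))] \we D_h[\Phib^{-(2m+1)}(\sigma(v))]\bigr),
\]
where $a_k(x):=|v(x+h)^k-v(x)^k|^2$. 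A weighted Abel summation with $\gamma_m:=[m(m+1)]^{-2}$ is designed so that only $a_1$ survives up to a boundary term: for every $M\ge 2$,
\[
a_1 \;=\; \sum_{m=1}^{M-1}\gamma_m\,\bigl[(m+1)^2 a_m - m^2 a_{m+1}\bigr] \;+\; \gamma_{M-1}(M-1)^2\, a_M.
\]
For each $m$, since $v\in\As_0$ the fields $\Phib^{\pm(2m+1)}(\sigma(v))$ are divergence-free, so locally $\Phib^{2m+1}(\sigma(v))=\nb^\perp F_m$ with $\|\nb F_m\|_\infty = \|\Phib^{2m+1}(\sigma(v))\|_\infty \lesssim m$ by~\eqref{normPhin}. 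Combining the identity $\nb^\perp G\we H=-\nb G\cd H$, an integration by parts exploiting divergence-freeness of $D_h[\Phib^{-(2m+1)}(\sigma(v))]$, and the Lipschitz estimate $\|\Phib^{-(2m+1)}\circ\sigma\|_{\Lip(\So)}\lesssim m^2$ (from~\eqref{closePhin2}), Cauchy--Schwarz yields for every $\zeta\in C^1_c(\om')$
\[
\Bigl|\textstyle\int D_h[\Phib^{2m+1}(\sigma(v))]\we D_h[\Phib^{-(2m+1)}(\sigma(v))]\,\zeta^2\Bigr| \;\le\; C\, m^3\, |h|\, \|\nb\zeta\|_2\, I,
\]
with $I:=(\int|D_hv|^2\zeta^2)^{1/2}$.

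Summing and using $\sum_{m=1}^{M-1}\gamma_m m^3=\sum_{m=1}^{M-1}m(m+1)^{-2}\sim\log M$, together with $a_M\le 4$, one obtains a quadratic inequality
\[
I^2 \;\le\; C\,|h|(\log M)\,I\,\|\nb\zeta\|_2 \;+\; \frac{C}{M^2}\|\zeta\|_2^2.
\]
Choosing a suitable power $M\sim|h|^{-1}$ and solving in $I$ yields~\eqref{lem_reg_estim}, that is (iii). Assertion (i), namely $v\in W^{1,p}_\loc(\Om)$ for every $p<2$ (hence for $p=3/2$), follows by a variant of the above argument at a lower integrability exponent (or by a careful interpolation/extraction argument combining~\eqref{lem_reg_estim} with the $L^\infty$ bound $|v|=1$). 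Once $v\in W^{1,1}_\loc(\Om)$, assertion (ii) is a direct application of Lemma~\ref{lem_rot=0}: the hypothesis $v\in\As_0$ includes $\mus_{\Phib^3}[v]=0$, and the choice $\lambda_0:=2ie_3\in C_a^\infty(\T,\C)$ satisfies $\lambda_0+\lambda_0''=-16ie_3$, whose set of zeros is empty.

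The principal obstacle is the balancing in the summation step: the wedge bounds grow like $m^3$ and are only barely absorbed by the weight decay $\gamma_m\sim m^{-4}$, producing a marginally divergent $\sum 1/m$ that forces the truncation at $M$ and must be offset against the boundary remainder $\gamma_{M-1}(M-1)^2 a_M=O(M^{-2})$ to recover the sharp logarithmic factor in~\eqref{lem_reg_estim}. A secondary technical point is passing from the $L^2$-difference-quotient estimate to genuine $W^{1,p}$ regularity in (i), since a direct Hölder interpolation loses a power of $\log(1/|h|)$ and must therefore be replaced either by rerunning the weighted summation at the right exponent or by a more careful duality argument exploiting $|v|=1$.
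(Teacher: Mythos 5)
Your overall strategy -- pair the odd trigonometric entropies $\Phib^{\pm(2m+1)}$, use Lemma~\ref{lem_Phinwedges}\eqref{Phinwedge_3} to read off $(m+1)^2 a_m - m^2 a_{m+1}$ from the wedge of the differences, telescope, and bound each wedge term by an integration by parts exploiting $\ncd[\Phib^{\pm(2m+1)}(\sigma(v))]=0$ -- is exactly the mechanism used in the paper, and your Abel weights $\gamma_m=[m(m+1)]^{-2}$ produce essentially the same telescoping that the paper encodes by grouping the $q_{2m+1}$ into dyadic blocks $Q_k$. The bounds $\|\nb F_m\|_\infty\lesssim m$, $\|D_hF_m\|_\infty\lesssim m|h|$, $\|\Phib^{-(2m+1)}\circ\sigma\|_{\Lip}\lesssim m^2$ also match. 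So the engine is right.

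The genuine gap is in how you close the estimate. After summing and Cauchy--Schwarzing you reach
\[
I^2 \;\le\; C\,|h|(\log M)\,I\,\|\nb\zeta\|_2 \;+\; \frac{C}{M^2}\|\zeta\|_2^2,\qquad I=\Bigl(\textstyle\int|D_h v|^2\zeta^2\Bigr)^{1/2},
\]
and no choice of $M$ turns this into $I^2\lesssim |h|^2\ln(1/|h|)$. Indeed the quadratic inequality forces $I\lesssim |h|\log M + M^{-1}$, and even the optimal $M\sim 1/|h|$ gives $I\lesssim|h|\log(1/|h|)$, hence $I^2\lesssim |h|^2\bigl[\ln(1/|h|)\bigr]^2$. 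This is precisely the ``naive'' outcome flagged in Remark~\ref{rem:regzero}(2), and Remark~\ref{rem:remregintro} explains why the squared logarithm is not acceptable: the sharp single logarithm is needed downstream (via Lemma~\ref{lem_GL} and the $\GL_\eta$ bound) to conclude that the singular set is \emph{locally finite}. What the paper does differently is a two-stage argument: it first establishes $v\in W^{1,3/2}_\loc$ (hence $W^{1,1}_\loc$) by a separate weighted summation, and only then runs the estimate you are describing, but replaces the Cauchy--Schwarz step $\int|D_hv|\,\zeta|\nb\zeta|\le\|\nb\zeta\|_2\,I$ by the \emph{linear} bound $\int|D_hv|\,\zeta|\nb\zeta|\le C(\zeta)\,|h|$ afforded by $v\in W^{1,1}_\loc$. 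That single substitution removes the squaring and yields $I^2\lesssim|h|^2\ln(1/|h|)$. Your proof as written therefore establishes a strictly weaker estimate than~\eqref{lem_reg_estim}.

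A secondary issue: you suggest that (i) might follow from (iii) by interpolation with $|v|=1$. This does not work: $\int_\om|D_hv|^2\lesssim|h|^2\ln(1/|h|)$ is a logarithmic Besov-type bound, not a Sobolev bound, and H\"older only propagates the logarithm to all $p<2$, i.e. $\int_\om|D_hv|^p\lesssim|h|^p[\ln(1/|h|)]^{p/2}$, which is not $v\in W^{1,p}_\loc$. The correct route -- which you mention in passing -- is to rerun the weighted summation at a subcritical exponent (in the paper's notation, replacing the weight $2^{k/2}$ by $2^{(2-p)k}$), for which the infinite sum over $m$ (or $k$) converges without any truncation at $M$, giving $v\in W^{1,p}_\loc$ for all $p<2$ directly. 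Once (i) is in hand, your derivation of (ii) via Lemma~\ref{lem_rot=0} with $\lambda_0=2ie_3$ and $\lambda_0+\lambda_0''=-16ie_3$ (nowhere vanishing) is correct.

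In short: establish (i) first by the subcritical-weight version of the argument, then use $W^{1,1}_\loc$ to bound $\int|D_hv|\,\zeta|\nb\zeta|$ linearly in $|h|$ when closing the truncated sum; your current Cauchy--Schwarz closure loses a logarithm that the theorem cannot afford to lose.
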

\begin{remark}\label{rem:regzero}~
\begin{enumerate}[(1)]
\item Let us stress once more that with point~\eqref{lem_reg_rot=0}, we recover a property which is not true for general configurations of $\As(\Om)$. 
\item Substituting $2^{(2-p)k}$ to $2^{k/2}$ in the proof of Step 3 below, we can establish that $v\in W^{1,p}\loc(\Om)$ for $1<p<2$. However, the constant degenerates as $p\up 2$ 
leading to a weaker version of~\eqref{lem_reg_estim} with  $ |h|^2 [\ln(1/|h|)]^2$ in place of $ |h|^2 \ln(1/|h|)$ (see Remark~\ref{rem:remregintro}). 
 \end{enumerate}
\end{remark}\smallskip

\begin{proof}[Proof of Proposition~\ref{lem_reg}]~

Since the result is local, it is enough to prove it for $\omega$ a ball (we will only use the fact that $\om$ is simply connected). Set $r:= \min\lt(1/2,\dist(\om,\R^2\sm\Om)/2\rt)$ and $\omega':= \omega+B_r\sub\sub \Omega$ (which is also simply connected). 
We fix in the proof a cut-off function $\zeta\in C^1_c(\Om,\R_+)$ supported in $\om$.

\noindent
We first introduce some notation.
\begin{enumerate}[(a)] 
\item 
For every $x\in \om$, we denote by $\dl_h(x)$ the unique element of $(-1,1]$ such that $v(x+h)=e^{i\pi\dl_h(x)}v(x)$.
\item For $k\ge0$, we define, 
\[
\om_k:=\lt\{x\in\om : \dfrac12<2^k|\dl_h(x)|\le1\rt\}.
\]
\item
For positive odd integers $n$ and $x\in \om$ we define (recall Definition \ref{def_Phi_n} of the trigonometric entropies $\Phib^n$),
\[
q_n(x):=\dfrac1{2i}\dfrac1{(n-1)^2(n+1)^2} \lt[D_h\lt[\Phib^n(\sigma(v))\rt](x)\rt]\we\lt[D_h\lt[\Phib^{-n}(\sigma(v))\rt](x)\rt].
\] 
\item 
For $k\ge0$ and $x\in \om$, we set,
\[
Q_k(x):=\sum_{m=2^k}^{2^{k+1}-1}q_{2m+1}(x),
\]
\item Eventually,  for $k\ge0$, we define the quantities,
\[
\mc{Q}_k:=\int_\Om Q_k\zeta^2.
\]
(In this formula, the quantity $Q_k(x)$ is arbitrary for $x\in\Om\sm\om$ (say  $Q_k(x)=0$). This makes no difference since $\supp\zeta\sub\om$.) 
\item In this proof we use the notation $A\les B$ to indicate that there is a universal constant $C\ge 0$ such that $A\le C B$.
\end{enumerate}
We split the proof into four steps. In the first two steps, we obtain bounds on the quantities $\mc{Q}_k$ from below and then from above.
In Step 3, we multiply these bounds by $2^{k/2}$ and sum over $k\ge0$.  Using  H\"older and sending $|h|$ to $0$ leads to $v\in W^{1,3/2}(\om)$. This establishes point~\eqref{lem_reg_1p} of the lemma.
By Lemma \ref{lem_rot=0} we then get $\rot v=0$, which is \eqref{lem_reg_rot=0}.  In the last step, we sum the bounds of Steps~1 and~2  over $k\in\{0,\dots,k_0\}$ where
$k_0:=\lceil\log_2(1/|h|)\rceil$. Using $v\in W^{1,1}\loc(\Om)$, we obtain \eqref{lem_reg_estim}.
\medskip

\noindent
\emph{Step 1. Lower bound.} 
We notice that since $\Phib^{\pm(2m+1)}\in\entev$, we have 
\[\Phib^{\pm(2m+1)}(\sigma(\xi e^{2i\vhi}))=\Phib^{\pm(2m+1)}(\sigma(\xi)e^{i\vhi})\qquad\text{for any }\xi\in\So,\ \vhi\in\R.\] In particular, for $x\in \om$ and $m\ge 1$,
\[
\Phib^{\pm(2m+1)}\lt(\sigma(v(x+h))\rt)= \Phib^{\pm(2m+1)}\lt(e^{i(\pi/2)\dl_h(x)}\sigma(v(x))\rt).
\]
With this in mind, we apply~\eqref{Phinwedge_3} of Lemma~\ref{lem_Phinwedges} with $n=2m+1$,  $m\ge1$ and $z=\sigma(v(x))$, $w=\sigma(v(x+h))=e^{i(\pi/2)\dl_h(x)}\sigma(v(x))$. We get, in $\om$,
\begin{align*}
q_{2m+1}&=\dfrac{\lt|e^{i\pi m\dl_h}-1\rt|^2}{4m^2} - \dfrac{\lt|e^{i\pi (m+1)\dl_h}-1\rt|^2}{4(m+1)^2}\\
&=\dfrac{\sin^2\lt(m(\pi/2)\dl_h\rt)}{m^2} - \dfrac{\sin^2\lt((m+1)(\pi/2)\dl_h\rt)}{(m+1)^2}.
 \end{align*}
Summing these identities for $m$ ranging over $\{2^k,\dots,2^{k+1}-1\}$, we get for $k\ge0$,
\[
Q_k
=\dfrac{\sin^2\lt((\pi/2)2^k\dl_h\rt)}{2^{2k}}-\dfrac{\sin^2\lt((\pi/2)2^{k+1}\dl_h\rt)}{2^{2(k+1)}} 
=\dfrac{\sin^4\lt((\pi/2)2^k\dl_h\rt)}{2^{2k}}.
\]
For  $x\in\om_k$, we have $1\ge 2^k|\dl_h(x)|>1/2$, hence 
\[
Q_k\ge \dfrac1{2^{2k}}\lt|2^k\dl_h\rt|^4 =2^{2k}|\dl_h|^4\ges |\dl_h|^2.
\]
Multiplying by $\zeta^2$ and integrating over $\Om$, we obtain,
\be\label{proof_lem_reg_a}
\mc{Q}_k \,\ges\,  \int_{\om_k}|\dl_h|^2 \zeta^2\,.
\ee
\smallskip

\noindent
\emph{Step 2. Upper bound.} Let $m\ge1$, since by assumption $\ncd[\Phib^{2m+1}(\sigma(v))]=0$ in the simply connected domain  $\om'\sub\sub\Om$, there exists $F^m\in \Lip(\om',\C)$ such that $\nb^\perp F^m=\Phib^{2m+1}(\sigma(v))$. Using an integration by parts and $\ncd[\Phib^{-(2m+1)}(\sigma(v))]=0$, we compute for $m\ge1$,
\begin{align}
\nonumber
\int_\Om q_{2m+1}\zeta^2 &= \dfrac1{4m^2(m+1)^2} \int_\Om \lt[D_h\nb^\perp F^m\rt]\we \lt[D_h\Phib^{-2m-1}(\sigma(v))\rt]\zeta^2\\
\nonumber
&= -\dfrac1{4m^2(m+1)^2} \int_\Om \lt[\nb D_hF^m\rt] \cd \lt[D_h\Phib^{-2m-1}(\sigma(v))\rt]\zeta^2\\
\label{proof_lem_reg_2}
&=\dfrac1{2m^2(m+1)^2} \int_\Om D_hF^m \lt[D_h\Phib^{-2m-1}(\sigma(v))\rt]\cd [\zeta\ \nb \zeta].
\end{align}
(To make the argument rigorous, we first regularize $\Phib^{-2m-1}(\sigma(v))$, perform the integration by parts and pass to the limit.)

\noindent
Let us recall the identities~\eqref{normPhin}. We have, for $\te\in\R$,
\begin{align*}
 \lt\|\Phib^{\pm(2m+1)}(e^{i\te})\rt\|_{\ell^2(\C^2)}&=2 \sqrt{(2m+1)^2+1}\le 4(m+1),\\
  \lt\|\frac d{d\te}\lt[\Phib^{\pm(2m+1)}(e^{i\te})\rt]\rt\|_{\ell^2(\C^2)}&=8m(m+1),
 \end{align*}
In particular, for $x\in \om'$,
\[
\lt\|\nb F^m(x)\rt\|_{\ell^2(\C^2)}=\lt\|\Phib^{2m+1}(\sigma(v(x)))\rt\|_{\ell^2(\C^2)}\les m.
\] 
We use these bounds in the form, 
\[
\lt\|D_hF^m(x)\rt\|_{\ell^2(\C^2)}\les m |h|,\qquad\quad
\lt\|D_h\Phib^{-2m-1}(\sigma(v))\rt\|_{\ell^2(\C^2)}\les m \min\lt(1,m|\dl_h|\rt).
\]
Using these inequalities to estimate the right-hand side of~\eqref{proof_lem_reg_2} we get,
\[
\int_\Om q_{2m+1}\zeta^2\, \les\dfrac{|h|}{m^2} \int_\Om \min\lt(1,m|\dl_h|\rt)\zeta |\nb \zeta|\,. 
\]
Summing over $m\in\{2^k,\dots,2^{k+1}-1\}$, we obtain, 
\be
\label{proof_lem_reg_b}
\mc{Q}_k=\int_\Om \,\sum_{m=2^k}^{2^{k+1}-1} q_{2m+1}\zeta^2 
\les \dfrac{|h|}{2^k} \int_\Om \min\lt(1,2^k|\dl_h|\rt)\zeta |\nb \zeta|\,. 
\ee

\noindent
\emph{Step 3. Proof of~\eqref{lem_reg_1p} and~\eqref{lem_reg_rot=0}.}

Multiplying~\eqref{proof_lem_reg_a} and~\eqref{proof_lem_reg_b} by $2^{k/2}$ and summing over $k\ge0$, we obtain 
\[
\sum_{k\ge0} \int_{\om_k} 2^{k/2}|\dl_h|^2 \zeta^2\,\les\, \sum_{k\ge0} 2^{k/2}\mc{Q}_k\,\les\,
|h| \int_\Om \lt[\sum_{k\ge0}\dfrac1{2^{k/2}} \min\lt(1,2^k|\dl_h|\rt)\rt]\zeta |\nb \zeta|\,.
\]
We use again $1/(2|\dl_h|)<2^k$ in $\om_k$ to estimate the left-hand side from below. We get,
\be
\label{proof_lem_reg_c}
\int_\Om |\dl_h|^{3/2} \zeta^2\,\les\,
 |h| \int_\Om \lt[\sum_{k\ge0}\dfrac1{2^{k/2}} \min\lt(1,2^k|\dl_h|\rt)\rt]\zeta |\nb \zeta|\,.
\ee

We now estimate the right-hand side. Let us fix $x\in \om$ with $\dl_h(x)\ne0$ and let us define $K\ge0$ as the integer such that $x\in\om_K$. We have 
\[
 \sum_{k\ge0}\dfrac1{2^{k/2}} \min\lt(1,2^k|\dl_h|\rt)\les \pi\sum_{k=0}^K 2^{k/2}|\dl_h| +\sum_{k>K} 2^{-k/2} \les  |\dl_h| 2^{K/2} +2^{-K/2} \les  |\dl_h|^{1/2}
\]
where in the last inequality we used that $2^{-(K+1)}\le |\dl_h|\le 2^{-K}$ in $\omega_K$.\\
\medskip

Plugging this estimate in~\eqref{proof_lem_reg_c} and dividing by $|h|^{3/2}$, we obtain 
\[
\int_\Om \lt(\dfrac{|\dl_h|}{|h|}\rt)^{3/2}\zeta^2\,\les \int_\Om\lt(\dfrac{|\dl_h|}{|h|}\rt)^{1/2}\zeta|\nb\zeta|\,.
\]
Applying  H\"older inequality with parameters $p=3$, $q=3/2$ to the functions $f=\lt(|\dl_h|/|h|\rt)^{1/2}\zeta^{2/3}$ and $g=\zeta^{1/3}|\nb\zeta|$ and simplifying, we get 
\[
\lt(\int_\Om \lt(\dfrac{|\dl_h|}{|h|}\rt)^{3/2}\zeta^2\,\rt)^{2/3}\,\les   \lt(\int_\Om\zeta^{1/2}|\nb\zeta|^{3/2}\,\rt)^{2/3}.
\]

Using that $|D_hv|\le \pi |\dl_h|$, and a standard characterization of Sobolev spaces, we deduce that $v\in W^{3/2}\loc(\Om,\So)$ with the estimate
\[
\lt(\int_\Om |\nb v|^{3/2}\zeta^2\,\rt)^{2/3}\,\les \lt(\int_\Om\zeta^{1/2}|\nb\zeta|^{3/2}\,\rt)^{2/3}.
\]
 Point~\eqref{lem_reg_1p} of the Lemma is established. As already explained at the beginning of this proof, since $v$ is a zero-state point~\eqref{lem_reg_rot=0}  then follows from Lemma~\ref{lem_rot=0}.  \medskip

\noindent
\emph{Step 4. Proof of ~\eqref{lem_reg_GL_estim}.}~

Let $k_0:=\lceil\log_2(1/|h|)\rceil$, that is $k_0$ is the integer defined by $1/2 < 2^{k_0}  |h| \le 1$. Since $|h|\le1/2$, we have $k_0\ge 1$.  Summing~\eqref{proof_lem_reg_a} and~\eqref{proof_lem_reg_b} over $k\in\{0,\dots,k_0-1\}$, we get 
\be\label{proof_lem_reg_d}
\sum_{k=0}^{k_0-1} \int_{\om_k} |\dl_h|^2 \zeta^2\,\les\, \sum_{k=0}^{k_0-1} \mc{Q}_k\,\les\,
|h| \int_\Om \lt[\sum_{k=0}^{k_0-1}\dfrac1{2^{k}} \min\lt(1,2^k|\dl_h|\rt)\rt]\zeta |\nb \zeta|\,.
\ee
We proceed as in Step~3 to estimate the right-hand side. Let us fix $x\in \om$ with $\dl_h(x)\ne0$ and let $K\ge0$ the integer such that $x\in\om_K$. 
We estimate the sum inside the brackets as follows
\begin{align*}
\sum_{k=0}^{k_0-1}\dfrac1{2^{k}} \min\lt(1,2^k|\dl_h|\rt) &\les \sum_{k=0}^{\min(k_0,K)-1} |\dl_h|+ \sum_{k=\min(k_0,K)}^{k_0}  \dfrac1{2^{k}}\\
&\les \,   k_0 |\dl_h| +  2^{-K} \les \,  ( k_0 + 1) |\dl_h| \les\,   |\dl_h| \ln(1/|h|),
\end{align*}
 In the last two  inequalities we used $2^{-K}\le 2|\dl_h(x)|$ for $x\in\om_K$  and $1\le k_0\le \log_2(1/|h|)$. Putting this inequality in~\eqref{proof_lem_reg_d}, we obtain
\[
\sum_{k=0}^{k_0-1} \int_{\om_k} |\dl_h|^2 \zeta^2\,\les\,
|h| \ln(1/|h|) \int_\Om |\dl_h|\,\zeta |\nb \zeta|\,.
\]
Using $|\dl_h| \le 2 |D_h v|$ and recalling that by Step~3, $v\in W^{1,3/2}\loc(\Om)\subset W^{1,1}\loc(\Om)$, we deduce,
\be\label{proof_lem_reg_e}
\sum_{k=0}^{k_0-1} \int_{\om_k} |\dl_h|^2 \zeta^2\,\le C |h|^2 \ln(1/|h|)\,,
\ee
where $C>0$ only depends on $\zeta$.\\
 Eventually, for $k\ge k_0$ and $x\in \om_k$ we have $|\dl_h(x)|\le 2^{-k_0}\le 2 |h|$, so that 
\[
\sum_{k\ge k_0} \int_{\om_k} |\dl_h|^2 \zeta^2\,\les  |h|^2 \int_\Om \zeta^2.
\]
Together with~\eqref{proof_lem_reg_e} (and $|h|\le1/2$) this leads to
\[
 \int_\Om |\dl_h|^2 \zeta^2 \,\le C |h|^2 \ln(1/|h|)\,.
\]
 Since $|D_h v|\le \pi  |\dl_h|$, we conclude the proof of~\eqref{lem_reg_estim}.  
\end{proof}
Before going further into the proof, let us state the analogue of Proposition~\ref{lem_reg} for classical zero-states.
\begin{proposition}
\label{lem_reg2}
Let $\Om$ be an open set and  $u\in L^1\loc(\Om,\So)$ be a classical zero-state, that is $\mu_{\Phi^n}[u]=0$ for $n\in\Z$. 
Then, $u \in W^{1,3/2}\loc(\Om)$ and for every open set $\om\sub\sub\Om$, there exists $C=C(\om,\Om)\ge0$ such that for every $h\in\R^2$ with 
$\ds|h|\le \min\lt(1/2,\dist(\om,\R^2\sm\Om)/2\rt)$,
\be\label{lem_reg_estim2}
\int_{\om} |D_h u|^2  \le C |h|^2 \ln(1/|h|).
\ee
\end{proposition}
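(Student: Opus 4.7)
The strategy is to run the argument of Proposition~\ref{lem_reg} with the trigonometric entropies $\Phib^n$ applied directly to $u$ (no square root is needed, since $u$ is already $\So$-valued). Work on a ball $\om$, fix a cut-off $\zeta\in C_c^1(\Om)$ supported in $\om$, and for $x\in\om$ and $|h|$ small define $\delta_h(x)\in(-1,1]$ by $u(x+h)=e^{i\pi\delta_h(x)}u(x)$. In analogy with the unoriented proof, set
\[
q_n(x):=\frac{1}{2i(n-1)^2(n+1)^2}\bigl[D_h\Phib^n(u)\bigr]\we\bigl[D_h\Phib^{-n}(u)\bigr],\qquad \mathcal{Q}_k:=\int\Bigl(\sum_{m=2^k}^{2^{k+1}-1}q_{2m+1}\Bigr)\zeta^2.
\]
Applying Lemma~\ref{lem_Phinwedges}~\eqref{Phinwedge_3} with $z=u(x)$ and $w=u(x+h)=e^{i\pi\delta_h}z$, the same telescoping as in Step~1 of Proposition~\ref{lem_reg} yields $\sum_{m=2^k}^{2^{k+1}-1}q_{2m+1}=\sin^4(2^k\pi\delta_h)/2^{2k}$.

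\textbf{Main obstacle and its resolution.} In the unoriented case the effective angular displacement of $\sigma(v)$ was at most $\pi/2$, so the bands $\{1/2<2^k|\delta_h|\le 1\}$ covered all of $\{0<|\delta_h|\le 1\}$. Here the angular increment $\pi\delta_h$ may reach $\pi$ (i.e., $u(x+h)\approx-u(x)$); accordingly I will use the shifted bands $\om_k:=\{2^{-k-2}<|\delta_h|\le 2^{-k-1}\}$ for $k\ge 0$, on which $2^k\pi|\delta_h|\in(\pi/4,\pi/2]$ and hence $\sin^4(2^k\pi\delta_h)\ge 1/4$, but these only exhaust $\{0<|\delta_h|\le 1/2\}$. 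To cover the residual set $\om_{-1}:=\{|\delta_h|>1/2\}$ I exploit the entropy $\Phib^2$, which is \emph{not} even and was therefore unavailable in the unoriented case, but is at our disposal here since $\mu_{\Phib^n}[u]=0$ for \emph{every} $n\in\Z$. A direct computation via Lemma~\ref{lem_Phinwedges} gives $q_2(x)=4\sin^2(\pi\delta_h/2)-(4/9)\sin^2(3\pi\delta_h/2)$, which is nonnegative throughout $\Om$ and satisfies $q_2\ge 14/9$ on $\om_{-1}$.

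\textbf{Completion.} The lower bounds $\sum_{m=2^k}^{2^{k+1}-1}q_{2m+1}\gtrsim|\delta_h|^2$ on $\om_k$ and $q_2\gtrsim|\delta_h|^2$ on $\om_{-1}$ are immediate. For the upper bounds, the vanishing $\mu_{\Phib^n}[u]=0$ provides local Lipschitz potentials $F^n$ with $\nabla^\perp F^n=\Phib^n(u)$; integration by parts as in Step~2 of Proposition~\ref{lem_reg} gives $\mathcal{Q}_k\lesssim(|h|/2^k)\int\min(1,2^k|\delta_h|)\zeta|\nabla\zeta|$ for $k\ge 0$ and $\int q_2\zeta^2\lesssim|h|\int|\delta_h|\zeta|\nabla\zeta|$. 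Treating the $q_2$-bound as the $k=-1$ contribution with weight $2^{-1/2}$, summing $2^{k/2}\mathcal{Q}_k$ over $k\ge-1$, and observing (as in the unoriented proof) that $\sum_{k\ge-1}2^{-k/2}\min(1,2^k|\delta_h|)\lesssim|\delta_h|^{1/2}$, we obtain $\int|\delta_h|^{3/2}\zeta^2\lesssim|h|\int|\delta_h|^{1/2}\zeta|\nabla\zeta|$, which H\"older's inequality closes into $\int|\delta_h|^{3/2}\zeta^2\lesssim|h|^{3/2}$, proving $u\in W^{1,3/2}_\loc(\Om)$. The logarithmic $L^2$ bound~\eqref{lem_reg_estim2} follows by summing the same inequalities without the $2^{k/2}$ weight over $k\in\{-1,0,\ldots,k_0-1\}$ with $k_0:=\lceil\log_2(1/|h|)\rceil$, exactly as in Step~4 of Proposition~\ref{lem_reg}, and using the just-established $W^{1,3/2}$ regularity to bound $\int|\delta_h|\zeta|\nabla\zeta|\lesssim|h|$.
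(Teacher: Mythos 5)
Your proof is correct and takes a genuinely different route from the paper's. The paper exploits the fact that $v:=u^2\in\As_0(\Om)$ and applies Proposition~\ref{lem_reg} to $v$ as a black box, which yields $v\in W^{1,3/2}_{\rm loc}$ and the $L^2$ bound for $D_h v$. Passing from $v$ to $u$ then requires only one extra ingredient: a bound $\mc{L}^2(A_h\cap\om)\lesssim|h|^2$ for the ``near-antipodal'' set $A_h=\{|D_h u|>\sqrt{2}\}$, where $|D_h v|$ fails to control $|D_h u|$. The paper obtains this from the single entropy pair $\Phib^{\pm 2}$ via the lower bound $q_2\ge(8/9)|D_hu|^4$ and an integration-by-parts upper bound, then splits $\int|D_h u|^p\zeta^2$ over $A_h$ and its complement (where $\sqrt2|D_hu|\le|D_hv|$). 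You instead rerun the entire telescoping machinery of Proposition~\ref{lem_reg} directly at the $u$ level: the odd trigonometric entropies $\Phib^{\pm(2m+1)}$ supply the dyadic bands $\om_k$, $k\ge 0$, with $\sin^4(2^k\pi\dl_h)/2^{2k}\gtrsim|\dl_h|^2$ (the shift $\om_k=\{2^{-k-2}<|\dl_h|\le 2^{-k-1}\}$ correctly compensates for the doubled angular range), and the even entropy $\Phib^2$---available in the oriented setting but forbidden in $\entev$---supplies the residual band $\om_{-1}=\{|\dl_h|>1/2\}$, with the identity $q_2=(32/9)(2+\cos(\pi\dl_h))\sin^4(\pi\dl_h/2)$ giving both nonnegativity and the required lower bound there. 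I checked the arithmetic in your dyadic sums and the H\"older bootstrap; they close. Both proofs hinge on exactly the same extra entropy $\Phib^{\pm2}$, and the estimates are of the same strength. The paper's version is shorter and modular; yours is self-contained (it never invokes Proposition~\ref{lem_reg}) and has the conceptual merit of showing that the oriented and unoriented regularity proofs are \emph{literally} the same argument, with the oriented case simply having one additional band---enabled by one additional entropy---to cover the full angular range $(-\pi,\pi]$ rather than $(-\pi/2,\pi/2]$.
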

\begin{proof}
Let us first notice that $v:=u^2\in\As_0(\Om)$ so that Proposition~\ref{lem_reg} provides $v\in W^{1,3/2}\loc(\Om)$ and a control on $|D_hv|$ of the form~\eqref{lem_reg_estim}. In order to conclude, we only have to establish that for any simply connected open set $\om\sub\sub\Om$ and  any $h\in\R^2$ such that $|h|\le \min\lt(1/2,\dist(\om,\R^2\sm\Om)/2\rt)$ there holds
\be\label{proof_lem_reg2_1}
\mc{L}^2(A_h\cap\om) \le C(\om) |h|^2\qquad\text{ where }\qquad A_h:=\lt\{x\in \Om : \ |D_h u(x)|>\sqrt{2}\rt\}.
\ee
Indeed, assuming~\eqref{proof_lem_reg2_1}, we have for every  $p>0$,  $\zeta\in C^1_c(\Om,\R_+)$ and  $h\in\R^2$ with $\ds|h|\le \min\lt(1/2,\dist(\supp \zeta,\R^2\sm\Om)/2\rt)$,
\[
\int_\Om |D_h u|^p\zeta^2\,= \int_{A_h} |D_h u|^p \zeta^2\,+\int_{\Om\sm A_h} |D_h u|^p\zeta^2\,\le C(\zeta) |h|^2 + 2^{-p/2}\int_\Om |D_h v|^p\zeta^2.
\]
Here we used~\eqref{proof_lem_reg2_1} and $\sqrt2|D_hu|\le|D_h v|$ in $\Om\sm A_h$. Choosing $p=3/2$ and using $v\in W^{1,3/2}\loc(\Om)$, we deduce 
\[
\int_\Om |D_h u|^{3/2}\zeta^2 \le C(\zeta) |h|^{3/2}\qquad\text{for }h\in \R^2\text{ with }|h|\text{ small enough},
\]
and we conclude that $u\in W^{1,3/2}\loc(\Om)$. Similarly, choosing $p=2$,~\eqref{lem_reg_estim} yields~\eqref{lem_reg_estim2}.\medskip

Let $\om\sub\sub\Om$ be an open ball, let $\zeta\in C^1_c(\om,\R_+)$ and let $h\in\R^2$ with \[|h|\le\dist(\om,\R^2\sm\Om)/2.\] To establish~\eqref{proof_lem_reg2_1}, we proceed as in the proof of Proposition~\ref{lem_reg} but using the pair of Jin-Kohn entropies $\Phib^{\pm2}$ (the first computations below correspond to the beginning of the proof of~\cite[Lemma~7]{LLP20} rewritten with our notation). Let us set
\[
q_2(x):=\dfrac1{18\,i} \lt[D_h\lt[\Phib^2(u)\rt](x)\rt]\we\lt[D_h\lt[\Phib^{-2}(u)\rt](x)\rt].
\] 
On the one hand, from~\eqref{Phinwedge_3} and elementary calculus\footnote{We use $\sin^2(\te)-\sin^2(3\te)/9=(8/9)(2+\cos(2\te))\sin^4(\te)\ge(8/9)\sin^4(\te)$.}, we have
\be\label{proof_lem_reg2_2}
\int_\Om q_2\zeta^2 =\int_\Om\lt(\lt|D_h u\rt|^2 - \dfrac{\lt|D_h [u^3]\rt|^2}{9}\rt)\zeta^2\, \ge\dfrac89 \int_\Om\lt|D_h u\rt|^4\zeta^2\, \ge\dfrac{32}9  \int_{A_h} \zeta^2\,.
\ee
On the other hand using $\mu_{\Phib^{\pm 2}}[u]=0$ we get, as in the proofs of Proposition~\ref{lem_reg} or of~\eqref{Besovintro},  
\[
\int_\Om q_2\zeta^2 \le C\,|h|\int_\Om |D_hu| \zeta\,
=C\,|h|\int_{A_h\cup [\Om\sm A_h]} \!|D_hu|\zeta\,
\le C'\lt(|h|\lt(\int_{A_h} \zeta^2\,\rt)^{1/2} +|h|^2\rt),
\]
with $C,C'\ge0$ depending on $\zeta$. In the last estimate, we used the Cauchy-Schwarz inequality and $\sqrt2|D_hu|\le|D_h v|$ in $\Om\sm A_h$ with $v\in W^{1,1}\loc(\Om)$.  With~\eqref{proof_lem_reg2_2}, we obtain $\int_{A_h} \zeta^2\le C(\zeta)|h|^2$ and then~\eqref{proof_lem_reg2_1} thanks to a covering argument.
\end{proof}

We now return to the proof of Theorem~\ref{thm_0nrjw} and show that~\eqref{lem_reg_estim} translates into a local control of the Ginzburg--Landau energy of any mollification of $v$. To this aim, we fix
$\rho\in C^\oo_c(\R^2,\R_+)$ with $\int \rho=1$ and for $\eta\in(0,1/2]$, we set $\rho_\eta:=\eta^{-2}\rho(\eta^{-1}\cd)$ and $v_\eta:=v*\rho_\eta$.
\begin{lemma}\label{lem_GL}
 Let $\Om$ be an open set and let $v\in L^1\loc(\Om,\So)$ such that the conclusion~\eqref{lem_reg_GL_estim} of Proposition~\ref{lem_reg} holds true. Then, for every open set $\om\sub\sub\Om$, there exists $C=C(\om,\Om)\ge 0$ such that for $\eta\in(0,1/2]$ with $\eta\le\dist(\om,\R^2\sm\Om)/4$,
\be\label{lem_GL_1}
\GL_\eta(v_\eta;\om):=\dfrac12\int_\om |\nb v_\eta|^2 + \dfrac1{4\eta^2} \int_\om (1-|v_\eta|^2)^2 \,\le\, C \ln(1/\eta). 
\ee
\end{lemma}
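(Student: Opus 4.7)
The plan is to bound the two contributions to $\GL_\eta(v_\eta;\om)$ separately, using the quantitative $L^2$ modulus of continuity of $v$ provided by~\eqref{lem_reg_estim}. Fix an intermediate open set $\om'$ with $\om\sub\sub\om'\sub\sub\Om$ and $\dist(\om,\R^2\sm\om')\ge 2\eta\,\radius(\supp\rho)$, and let $C_0=C(\om',\Om)$ be the constant appearing in~\eqref{lem_reg_estim} applied with $\om'$ in place of $\om$. Throughout, for $y$ in the support of $\rho_\eta$ we have $|y|\lesssim \eta\le 1/2$, so~\eqref{lem_reg_estim} is available with $h=\pm y$ and gives $\int_{\om'}|D_{\pm y}v|^2\le C_0 |y|^2\ln(1/|y|)\lesssim \eta^2 \ln(1/\eta)$.

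For the gradient term, using $\int\nabla\rho_\eta=0$ I write
\[
\nabla v_\eta(x)=\int [v(x-y)-v(x)]\nabla\rho_\eta(y)\,dy,
\]
apply the Cauchy--Schwarz inequality in the $y$ variable using the probability-like weight $|\nabla\rho_\eta(y)|\,dy$, and integrate over $x\in\om$. After Fubini and the uniform bounds $\|\nabla\rho_\eta\|_1\lesssim\eta^{-1}$ and $\|\nabla\rho_\eta\|_\infty\lesssim\eta^{-3}$, the key estimate reduces to
\[
\int_\om|\nabla v_\eta|^2 \lesssim \eta^{-1}\int_{\supp\rho_\eta}\lt(\int_{\om'}|D_{-y}v(x)|^2\,dx\rt)|\nabla\rho_\eta(y)|\,dy \lesssim \eta^{-1}\cdot \eta^2\ln(1/\eta)\cdot\eta^{-1} = \ln(1/\eta),
\]
which is the desired bound (up to the constant $C(\om,\Om)$).

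For the potential term, I exploit $|v|=1$ to rewrite $1-|v_\eta(x)|^2$ as a nonnegative double integral. Concretely, $1=\iint\rho_\eta(y)\rho_\eta(z)\,dy\,dz$ and $|v_\eta(x)|^2=\iint v(x-y)\overline{v(x-z)}\rho_\eta(y)\rho_\eta(z)\,dy\,dz$, so symmetrizing in $(y,z)$ and using $|v|=1$ yields
\[
1-|v_\eta(x)|^2 = \tfrac12\iint |v(x-y)-v(x-z)|^2\,\rho_\eta(y)\rho_\eta(z)\,dy\,dz.
\]
Integrating in $x\in\om$, making the change of variable $h=z-y$ inside, and invoking~\eqref{lem_reg_estim} for each $|h|\lesssim\eta$ gives $\int_\om(1-|v_\eta|^2)\lesssim \eta^2\ln(1/\eta)$. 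Since $|v_\eta|\le 1$ implies $(1-|v_\eta|^2)^2\le 1-|v_\eta|^2$, this furnishes $\eta^{-2}\int_\om(1-|v_\eta|^2)^2\lesssim\ln(1/\eta)$, which combined with the previous paragraph gives~\eqref{lem_GL_1}.

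The proof is essentially bookkeeping once the two representations (for $\nabla v_\eta$ and $1-|v_\eta|^2$) are in place; the only mildly delicate point is the potential term, where I expect the symmetrized double-integral identity (which is specific to $\So$-valued $v$) to be the most important step, since a naive triangle-inequality bound on $1-|v_\eta|$ would only give $O(|h|(\ln(1/|h|))^{1/2})$ and miss the sharp power of $\eta$.
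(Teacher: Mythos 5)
Your proof is correct and follows essentially the same approach as the paper: control both terms of $\GL_\eta$ by the quantitative $L^2$ modulus of continuity~\eqref{lem_reg_estim}, and for the potential term use the symmetrized double-integral identity $1-|v_\eta|^2=\tfrac12\iint|v(x-y)-v(x-z)|^2\rho_\eta(y)\rho_\eta(z)\,dy\,dz$ (valid since $|v|=1$). The only cosmetic difference is in the gradient term: you use the single-integral representation $\nabla v_\eta=\int[v(x-y)-v(x)]\nabla\rho_\eta(y)\,dy$ followed by Cauchy--Schwarz against the weight $|\nabla\rho_\eta|\,dy$, whereas the paper uses the symmetrized representation $\nabla v_\eta=\iint[v(x-y)-v(x-z)]\nabla\rho_\eta(y)\rho_\eta(z)\,dy\,dz$ followed by Jensen; both deliver the same $\ln(1/\eta)$ bound with identical dependence on $\|\nabla\rho_\eta\|_1\lesssim\eta^{-1}$. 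Two small points of hygiene: the mention of $\|\nabla\rho_\eta\|_\infty\lesssim\eta^{-3}$ is not actually used in your displayed chain (only $\|\nabla\rho_\eta\|_1$ appears twice), and you should pick $\om'$ \emph{independently} of $\eta$ (say $\om'=\{x:\dist(x,\om)<\tfrac12\dist(\om,\R^2\sm\Om)\}$, which works for all $\eta\le\dist(\om,\R^2\sm\Om)/4$), so that the constant $C_0=C(\om',\Om)$ from~\eqref{lem_reg_estim} does not depend on $\eta$; as written, your choice of $\om'$ nominally depends on $\eta$.
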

\begin{proof}
 For the reader's convenience we recall some classical computations (see for instance  \cite{dLI15,LP18}). We first compute for $x\in\om$
\begin{align*}
\nb v_\eta(x)&=\int_{B_\eta} v(x-y)\nb \rho_\eta(y)\, dy = \int_{B_\eta} [v(x-y)-v_\eta(x)]\nb \rho_\eta(y)\, dy\\
&=\int_{B_\eta\times B_\eta} [v(x-y)-v(x-z)]\nb \rho_\eta(y)\, \rho_\eta(z) \,dy\, dz,
\end{align*}
where we used $\int \nb \rho_\eta=0$. We deduce the estimate,
\[
|\nb v_\eta(x)|\le \frac{C}{\eta^3} \int_{B_\eta} \int_{B_\eta} |v(x-y)-v(x-z)|\rho_\eta(z)\, dz\, dy.
\]
 Squaring, integrating on $\om$, using Jensen inequality and Fubini, we obtain 
\[
\int_\om |\nb v_\eta|^2 \le \dfrac {C}{\eta^4} \int_{B_\eta} \int_{\om+B_\eta}  |v(y)-v(y-h)|^2\, dy\, dh.
\]
 Using~\eqref{lem_reg_estim} from Proposition~\ref{lem_reg}~\eqref{lem_reg_GL_estim}, we get 
\be\label{proof_thm_struct_1}
\int_\om |\nb v_\eta|^2 \le C \ln(1/\eta).
\ee
Next, since $v$ takes values in $\So$, we have for $x\in\om$,
\begin{align*}
0\le 1-|v_\eta(x)|^2 &= \int_{B_\eta\times B_\eta} \lt(1-v(x-y)\ov v(x-z)\rt)\rho_\eta(y)\rho_\eta(z)\, dy\, dz\\
&=\dfrac12 \int_{B_\eta\times B_\eta}\lt |v(x-y)-v(x-z)\rt|^2\rho_\eta(y)\rho_\eta(z)\, dy\, dz,
\end{align*}
where we used the trigonometric formulas  $1-\cos\te=2\sin^2(\te/2)=(1/2)|1-e^{i\te}|^2$. Integrating over $\om$, using Jensen inequality,  Fubini and~\eqref{lem_reg_estim} as above, we get
\[
\int_\om  (1-|v_\eta|^2)^2\le C   \eta^2 \ln(1/\eta).
\]
Together with~\eqref{proof_thm_struct_1} this leads to~\eqref{lem_GL_1}.
\end{proof}
We may now prove Theorem~\ref{thm_0nrjw}, which we restate in terms of $v$. We set $u^*(x):= (x_1+ix_2)/|x|$.
\begin{theorem}
\label{thm_0nrj}
For each $v\in\As_0(\Om)$, there holds $\rot v=0$ and  there exists a locally finite set $S\sub \Om$ such that:
\begin{enumerate}[(i)] 
\item $v$ is locally Lipschitz continuous in $\Om\sm S$,
\item for $x\in\Om\sm S$, $v=v(x)$ on the connected component of $[x+\R\bs\sigma(v(x))]\cap [\Om\sm S]$ which contains $x$.
\item for every $B=B_r(x^0)$ such that $2B:=B_{2r}(x^0)\sub \Om$ and $2B\cap S=\{x^0\}$,
\begin{enumerate}[(a)]
\item either $v(x)=(u^*)^2(x-x^0)$ in $B\sm\{x^0\}$,
\item or there exists $ \xib\in \So$ such that\smallskip
\begin{itemize}
\item[$\circ$] $v(x)=(u^*)^2(x-x^0)$ in $\ds\lt\{x\in B\sm\{x^0\} : (x-x^0)\cd\xib\ge0\rt\}$,\smallskip
\item[$\circ$]  $v$ is  Lipschitz continuous in $\ds\lt\{x\in B\sm\{x^0\}: (x-x^0)\cd\xib\le0\rt\}$.
\end{itemize}
\end{enumerate}
\end{enumerate}
\end{theorem}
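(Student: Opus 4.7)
The plan is to treat the problem as an interplay between two pieces already assembled in the paper: the regularity estimate from Proposition~\ref{lem_reg} (which gives $\operatorname{rot} v=0$, the Sobolev regularity $v\in W^{1,p}\loc$ for every $p<2$, and the sharp modulus of continuity $\int_\om|D_h v|^2\le C|h|^2\ln(1/|h|)$) and the Ginzburg--Landau energy bound of Lemma~\ref{lem_GL}, which says that any mollification $v_\eta$ of $v$ satisfies $\GL_\eta(v_\eta;\om)\le C\ln(1/\eta)$. This is exactly the threshold regime at which the vortex analysis of Ginzburg--Landau type (Bethuel--Brezis--H\'elein~\cite{BBH}, Sandier--Serfaty~\cite{SS07}, and the compactness results of Alicandro--Ponsiglione~\cite{AlicPon}) provides a \emph{locally finite} set $S\subset\Om$ which concentrates the vortex mass: outside $S$, the Jacobians vanish in the limit, each point of $S$ carries a nonzero integer degree, and $|v|=1$ almost everywhere.

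Once $S$ is identified, I would work locally on any simply connected $U\subset\sub\Om\sm S$. Because $v$ has vanishing topological charge on every loop in $U$, the Sobolev lifting theory (Bourgain--Brezis--Mironescu~\cite{BMP}, Demengel~\cite{Demengel}) produces a Sobolev square root $u\in W^{1,p}(U,\So)$ with $u^2=v$. Since $\operatorname{rot} v=0$, Lemma~\ref{lem_curlrot} yields $\curl_a u=0$, and the vanishing of every even trigonometric entropy production $\mus_{\Phib^{2m+1}}[v]$ translates, on $U$, into the vanishing of every entropy production $\mu_{\Phib^n}[u]$ for $n$ odd; by the analogue for classical zero-states of Proposition~\ref{lem_reg2} together with the density of $\Span\{\Phib^n\}_n$ in $\ent$, $u$ is a classical Aviles--Giga zero-state on $U$. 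The structure theorem of~\cite{JOP02} (or a direct argument following characteristics: the regularity now allows to define these lines rigorously and $u$ must be constant along each line in the direction $\ub$ as long as one stays in $U$) then implies that $u$, hence $v$, is locally Lipschitz in $\Om\sm S$ and constant along the line $x+\R\bs\sigma(v(x))$ inside $\Om\sm S$. This proves~(i) and~(ii).

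For (iii), fix $B_r(x^0)$ with $2B\cap S=\{x^0\}$, and let $d\in\Z$ be the degree of $v$ around $x^0$. The key split is by parity of $d$: if $d$ is even, $\sigma(v)$ admits a single-valued lift $u\in W^{1,p}(B\sm\{x^0\},\So)$ and $u$ itself has an isolated singularity of degree $d/2$ at $x^0$; by the classical oriented structure result and the bound on $d$ coming from the GL energy $\GL_\eta\le C\ln(1/\eta)$, one must have $|d/2|=1$, whence (after the obvious normalization exploiting $v=(-u)^2$) $u(x)=(x-x^0)/|x-x^0|$, which is case~(a). If $d$ is odd, no global lift exists on the punctured ball, but one exists on the complement of any ray $\{x^0+t\xib:t\ge0\}$. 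The characteristic foliation of case~(ii), extended from the lifted side, forces the half-plane $\{(x-x^0)\cd\xib\ge0\}$ to carry the half-vortex pattern $(x-x^0)/|x-x^0|$, while on the opposite half-plane the characteristics leave the singular point and $v$ is Lipschitz; this is case~(b). The direction $\xib$ is the unique direction along which the traces of the two lifts disagree by a sign.

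The step I expect to be the main obstacle is the classification near the singular points, in particular the exclusion of degrees $|d|\ge 3$ and the rigorous matching between the topological picture (inherited from $v_\eta$ via Ginzburg--Landau) and the geometric picture (inherited from the characteristics of the Sobolev lift $u$). The quantitative sharpness in Lemma~\ref{lem_GL}---the $\ln(1/\eta)$ rather than $o(1/\eta)$ as highlighted in Remark~\ref{rem:remregintro}---is what makes the set $S$ locally finite and bounds each individual degree; any loss there would allow infinitely many or higher-order singularities and would ruin the structure. Combining that sharp bound with the characteristic transport (which by itself is available only in $W^{1,p}$, $p<2$, and so requires a careful passage to the limit near $S$) is the delicate point.
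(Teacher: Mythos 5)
The overall two–stage architecture you propose — use the regularity of Proposition~\ref{lem_reg} and the logarithmic Ginzburg--Landau bound of Lemma~\ref{lem_GL} to extract a locally finite singular set $S$ via \cite{AlicPon}, then lift $v$ to a Sobolev square root on simply connected pieces of $\Om\sm S$ and conclude via the characteristics — is exactly the architecture of the paper's proof, and your Steps~1 and~2 are essentially correct. Where you diverge is Step~3: the paper's proof is deliberately self-contained (Steps~3.a--3.f are a fresh geometric argument which, as a byproduct, reproves the main result of~\cite{JOP02}), whereas you propose a degree-parity dichotomy at $x^0$ and in the even case reduce to~\cite{JOP02} outright. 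That reduction is a legitimate alternative route for the even case, provided one verifies (you do not) that the lift $u$ on the punctured ball is in fact a zero-state on the \emph{full} ball, i.e. that $\mu_{\Phib}[u]$ has no atom at $x^0$. This does hold because $u\in W^{1,p}$ for $p<2$ forces $\mu_\Phib[u]$ to be an $L^p$ function which vanishes a.e., but it needs to be said.

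Two genuine gaps remain. First, you claim that the bound $\GL_\eta(v_\eta;\om)\le C\ln(1/\eta)$ forces $|d/2|=1$: it does not. That bound (through the Jacobian compactness of~\cite{AlicPon}) only yields $|\mu|(\om)\le C'$ for a constant depending on $\om$ and $\Om$, so it bounds the \emph{sum} $\sum|z_i|$ by a non-explicit constant; it does not exclude individual degrees $|z_i|\ge2$. In the even case the exclusion of higher degrees must come from the structure result for classical zero-states, not from the energy bound. Second — and this is the more serious gap — your treatment of the odd-degree (half-disclination) case is an assertion, not an argument. You state that ``the characteristic foliation, extended from the lifted side, forces the half-plane $\{(x-x^0)\cd\xib\ge0\}$ to carry the half-vortex pattern'' and that ``$v$ is Lipschitz on the opposite half-plane,'' but you give no reason why the interface must be a diameter (rather than a smaller or larger sector), how $\xib$ is determined, or why the complement is Lipschitz rather than containing further degenerate behavior. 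The paper's Steps~3.b--3.f — producing two non-collinear radial characteristics, showing $v=(u^*)^2$ on the convex sector between them, proving that the maximal such sector has angle exactly $\pi$ or $2\pi$ by ruling out $\alpha<\pi$ and $\pi<\alpha<2\pi$, and deducing Lipschitz regularity on the complementary half-plane from the transversality estimate \eqref{proof_thm_struct_3} — carry precisely the content that your sketch leaves out, and there is no way to borrow it from~\cite{JOP02}, since the classical oriented theory has no disclinations. This is the crux of the theorem in the unoriented setting and needs to be supplied.
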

\begin{remark}
We can deduce the structure of zero-states in the classical setting by proceeding as in the proof below using Proposition~\ref{lem_reg2}, Lemma~\ref{lem_GL} and (more) simple geometric arguments. This provides an alternative proof of the main results of~\cite{JOP02}. 
\end{remark}
\begin{proof}[Proof of Theorem~\ref{thm_0nrj} (Theorem~\ref{thm_0nrjw})] The fact that $\rot v=0$ is established in \eqref{lem_reg_rot=0} of Proposition \ref{lem_reg} so we only need to prove (i)--(iii).\\
\noindent\emph{Step 1. Identification of the singular set.}  By \eqref{lem_GL_1} of Lemma \ref{lem_GL} and \cite[Theorem 4.1]{AlicPon}, the Jacobians
$\curl (v_\eta \wedge \nabla v_\eta)$ locally weakly converge as $\eta \dw 0$ (for the flat norm) 
to a measure $\mu=2\pi \sum_{i} z_i  \delta_{x_i}$ with $z_i\in \Z$. Moreover, for every $\omega\sub \sub \Omega$, there exists $C,C'\ge0$ depending on $\om$ and $\Om$ such that 
\[
 |\mu|(\om)\le C \limsup_{\eta\dw0}\frac{\GL_\eta(v_\eta;\om)}{\ln(1/\eta)} \stackrel{\eqref{lem_GL_1}}{\le} C'.
\]
Therefore the sum is locally finite. From~\eqref{lem_reg_1p} of Proposition~\ref{lem_reg}, $v\in W^{1,3/2}\loc(\Om)$, hence $\nabla v_\eta$ converges strongly
in $L^{3/2}\loc(\Om)$ to $\nabla v$ and $v_\eta$ converges strongly in $L^6\loc(\Omega)$ to $v$ so that $v_\eta \wedge \nabla v_\eta$ converges to $v\wedge \nabla v$ in $L^1\loc(\Om)$. Hence, $\mu=\curl (v\wedge \nabla v)$ and we can set $S:=\supp \mu$.
\medskip

\noindent
\emph{Step 2. Local Lipschitz regularity of $v$ in $\Om\sm S$.} Let $B=B_r(x)$ be an open ball such that $2B\sub\sub\Om\sm S$. Since $v\in W^{1,3/2}(2B)$ and  $\mu= \curl (v\wedge \nabla v)=0$ on $2B$,
by \cite{Demengel,BMP}, we can find $\vhi\in W^{1,1}(2B)$ such that $v= e^{i\vhi}$. 
We now set $\te:=\vhi/2$ and  $u:=e^{i\te}$ so that  $u^2=v$ with $u\in W^{1,3/2}(2B,\So)$.  From point~\eqref{lem_reg_rot=0}  of Proposition~\ref{lem_reg}, we have $\rot v=0$ in $2B$ so that   Lemma~\ref{lem_curlrot} implies $\curl u=0$. 
By the chain rule, this translates into
\be\label{proof_thm_struct_2}
\ub\cd\nb\te=0\quad\text{almost everywhere in }2B.
\ee
For $\lambda \in \R$ we define the level sets
\[
E_\lambda^-\coloneqq\{x\in 2B:\te(x)<\lambda\} \quad \text{ and }\quad E^+_\lambda\coloneqq\{x\in 2B:\te(x)>\lambda\}.
\] 
By~\eqref{proof_thm_struct_2} and since $\te\in W^{1,1}(2B)$, for almost every $\lambda$ these sets have finite perimeter in $2B$ and the tangent on $\pt E_\lambda^\pm$ is collinear to $e^{i\lambda}$. Therefore $E_\lambda^\pm$ is the 
intersection with $2B$ of a locally finite number of stripes parallel to $e^{i\lambda}$. Observing
that for $\lambda_-\le\lambda_+$ the sets $E_{\lambda_-}^-$ and $E_{\lambda_+}^+$ do not intersect
in $2B$ and taking into account the orientation of the stripes, we claim that for almost every
$\lambda_-,\lambda_+$ with $\lambda_-<\lambda_+\le\lambda_-+\pi/2$
\be\label{proof_thm_struct_3}
\dist(E_{\lambda_-}^-\cap B, E_{\lambda_+}^+\cap B) \ge 2r\sin\lt(\dfrac{\lambda_+-\lambda_-}2\rt).
\ee
Indeed, let $x_{\lambda_-}\in E_{\lambda_-}^-\cap B$ and $x_{\lambda_+}\in E_{\lambda_+}^+\cap B$, where
$\lambda_-<\lambda_+\le\lambda_-+\pi/2$. Since $E_{\lambda_-}^-$ is made of stripes parallel to
$e^{i\lambda_-}$, the line segment $L_{\lambda_-}\coloneqq [x_{\lambda_-}+\R \bs e^{i\lambda_-}]\cap 2B$
is contained in~$E_{\lambda_-}^-$. Similarly,  $L_{\lambda_+}\coloneqq [x_{\lambda_+}+\R \bs
e^{i\lambda_+}]\cap 2B \subset E_{\lambda_+}^+$. Since $\lambda_+\not\equiv \lambda_- \pmod{\pi}$, the lines spanned by $L_{\lambda_-}$ and $L_{\lambda_+}$ intersect, and since $L_{\lambda_-}\cap L_{\lambda_+}\sub E_{\lambda_-}^-\cap E_{\lambda_+}^+=\void$ they do not intersect in $2B$.
Minimizing $\dist(L_{\lambda_-}\cap B,L_{\lambda_+}\cap B)$ under these constraints, the minimizer is given by
\begin{align*}
L_{\lambda_-} &= \left(x+e^{i\alpha}\left[-2r+\R \bs e^{-\frac{i\beta}{2}}\right]\right)\cap 2B\\
L_{\lambda_+} &= \left(x+e^{i\alpha}\left[-2r+\R \bs e^{\frac{i\beta}{2}}\right]\right)\cap 2B
\end{align*}
with $\lambda_{\pm} = \alpha\pm\frac{\beta}{2}$ (see Figure~\ref{Fig_step2}, where we assume $\alpha=0$).
Is is then easy to see that
\begin{equation}\label{eq:fig3}
|x_{\lambda_-}-x_{\lambda_+}|\ge\dist(L_{\lambda_-}\cap B, L_{\lambda_+}\cap B) \ge 2r\sin\lt(\dfrac{\lambda_+-\lambda_-}2\rt),
\end{equation}
hence the claim \eqref{proof_thm_struct_3} by the arbitrariness of $x_{\lambda_-}\in
E_{\lambda_-}^-\cap B$ and $x_{\lambda_+}\in E_{\lambda_+}^+\cap B$.
\begin{figure}[ht]
    \centering
		\centering
		\captionsetup{width=.55\textwidth}
		\includegraphics[width=0.35\textwidth]{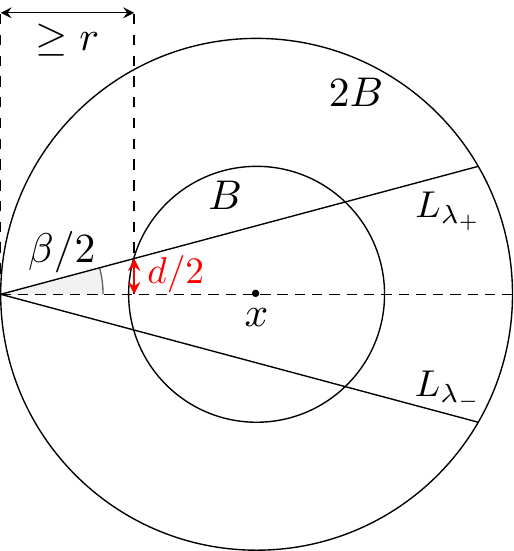}
		\caption{$d=\dist(L_{\lambda_-}\cap B,L_{\lambda_+}\cap B)$ as in \eqref{eq:fig3}}.
		\label{Fig_step2}

\end{figure}
From \eqref{proof_thm_struct_3} we deduce that $\te$ admits a Lipschitz continuous representative in  $ B$ with $\|\nb \te\|_{L^\oo(B)}\le 1/r$.  Therefore $v=e^{2i\te}$ admits a Lipschitz continuous representative in $\ov B$  with Lipschitz constant at most $2/r$. Eventually, returning to~\eqref{proof_thm_struct_2}, we see that for every $x\in B$ there holds
\[
v=v(x)\quad\text{on }[x+\R \bs e^{i\te(x)}]\cap B.
\]
Equivalently,
\[
v=v(x)\quad\text{on }[x+\R\bs \sigma(v(x))]\cap B.
\]
We have established points~(i) and~(ii) of the theorem.
\medskip

\noindent
\emph{Step 3. Structure of $v$ near the singularities.} Let $x^0\in S$ and $r>0$ such that $2B=B_{2r}(x^0)\sub \sub\Om$ with $2B\cap S=\{x^0\}$. By translation and scaling we may assume that $B= B_1$
and we set $B':=B\sm\{0\}$. We start by noting that since $\mu= \curl(v\wedge \nabla v)\neq 0$ in $2B$, $v$ must be discontinuous in $2B$.\medskip

\noindent
\emph{Step 3.a. Preliminaries.} For $x\in 2B'$ we denote by $L(x)$ the connected component of $[x+\R\bs \sigma(v(x))]$ in $2B'$ which contains $x$. $L(x)$ is an open segment in $\R^2$
with $L(x)\sub 2B'$. There are two cases:
\begin{enumerate}[(a)]
\item either $2B'\sm L(x)$ splits into two connected components. In this case the two endpoints of $L(x)$ lie on $\pt (2B)$, see Figure~\ref{Fig_case_a},
\item or $L(x)$ is a radius of the form $(0,2 x/|x|)$, see Figure~\ref{Fig_case_b}.
\end{enumerate}
For shortness, we denote $\xi(x):=\pm\sigma(v(x))$ where the sign is not important in case (a) but is chosen such that $\bs \xi(x)=x/|x|$ in case (b). With this notation, $L(x)$ is the connected component of $[x+\R\bs \xi(x)]$ in $2B'$ which contains $x$. 

\begin{figure}[ht]
    \centering
    \begin{minipage}{0.45\textwidth}
        \centering
		\captionsetup{width=.8\textwidth}
        \includegraphics[width=0.7\textwidth]{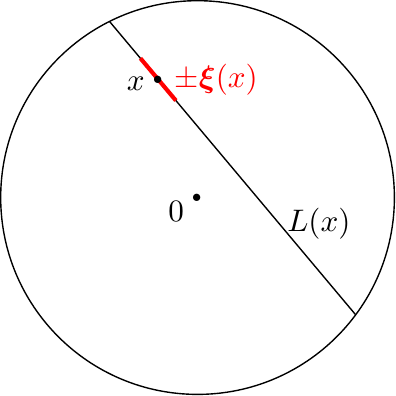}
        \caption{Case (a).}
		\label{Fig_case_a} 
    \end{minipage}\hfill
    \begin{minipage}{0.45\textwidth}
        \centering
		\captionsetup{width=.8\textwidth}
        \includegraphics[width=0.7\textwidth]{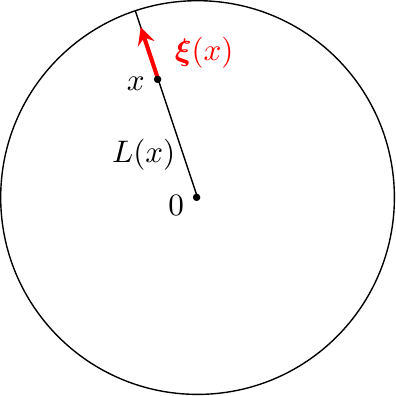}
        \caption{Case (b).}
		\label{Fig_case_b}
    \end{minipage}

	\vspace{4pt}
\end{figure}

From Step~2, $v$ is constant on $L(x)$ with value $(\xi)^2(x)$. This has the following consequences:
\begin{enumerate}[1.]
\item
 If $x,y\in 2 B'$ are such that $L(x)$ and $L(y)$ intersect then $v(x)=v(y)$ and $L(x)=L(y)$. 
 \item If $L(x)$ is of type (b), then $v=(u^*)^2$ on $L(x)$. 
\end{enumerate}

We use these properties repeatedly in the sequel with no further references.
\medskip

\noindent
\emph{Step 3.b. Construction of two radii of type (b).}
Let us establish the following.
\be\label{proof_thm_struct_4}
\text{There exist }x^1, x^2\in B'\text{ with }L(x^j)\text{ of type (b) for }j=1,2 \text{ and }\xi(x^2)\ne\pm \xi(x^1). 
\ee
Since $v$ is continuous in $B'$ and not continuous in $B$, there exist $\eps>0$
and two sequences $(x_k^1)$, $(x_k^2)$ converging to $0$ such that $|v(x^1_k)-v(x^2_k)|\ge\eps$ for every $k\ge1$.
Up to extraction, there exist $z^1, z^2\in\So$ with $|z^1-z^2|\ge\eps$ such that $v(x^{j}_k)\to z^{j}$ for $j\in \{1,2\}$. For the sequence of sets $(L(x^j_k))$, we have 
\[
L(x^j_k) \to L^j\quad \text{in Hausdorff distance, for }j\in \{1,2\}, 
\]
where $L^j$ is one of the segments $(0,\pm 2\sigma(\bs z^j))$ or the union of these two segments.
By continuity of $v$ in $2B'$, we have obtained two radii $L^1=(0,2\bs\xi^1)$, $L^2=(0,2\bs\xi^2)$ with $\xi^1,\xi^2\in\So$ such that $v=(\xi^j)^2=z^{j}$
on $L^j$ and with moreover $|(\xi^2)^2- (\xi^1)^2|\ge\eps$. In particular $L^2\ne\pm L^1$. Choosing $x^1\in L^1$ and $x^2\in L^2$, the sets $L^1=L(x^1)$, $L^2=L(x^2)$ are of type (b) 
with $x^1$, $x^2$ not collinear. This proves  claim~\eqref{proof_thm_struct_4}.\medskip

\noindent
\emph{Step 3.c Behavior of $v$ in convex sectors.}

Let $L^1=L(x^1)$, $L^2=L(x^2)$ be two radii with $x^1$, $x^2$ as in~\eqref{proof_thm_struct_4}. Let us denote by $P$ the open convex sector in $2B'$ delimited by $L^1$ and $L^2$. 
We claim that 
\be \label{proof_thm_struct_5}
v=(u^*)^2\quad\text{in }P.
\ee
Let us notice that since $L^2\ne\pm L^1$, the inner angle of $P$ at 0 is strictly smaller than $\pi$. Let $x\in P$ and let us consider the sequence $(x_k)=(2^{-k}x)$ and the sequence of segments $(L(x_k))$. Since the inner angle of $P$ is smaller than $\pi$, for $k$ large enough $x_k$ belongs to the convex hull of $L^1\cup L^2$, but $L(x_k)$ cannot intersect $L^1$ nor $L^2$, thus it is necessarily of type (b) (see Figure~\ref{Fig_case_3d}). Recalling that $x \in L(x_k)$, we deduce  $v(x)=(u^*)^2(x)$ on $L(x)=L(x_k)$. This proves~\eqref{proof_thm_struct_5}.
\medskip

\begin{figure}[ht]
    \centering
    \begin{minipage}{0.45\textwidth}
        \centering
		\captionsetup{width=.95\textwidth}
        \includegraphics[width=0.8\textwidth]{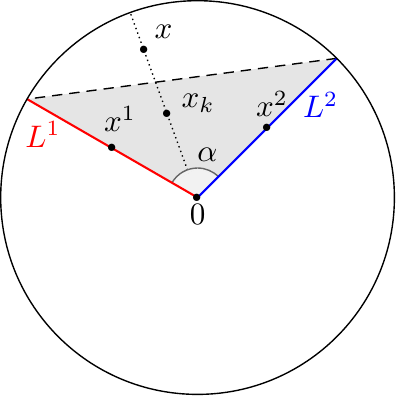}
		\caption{}
		\label{Fig_case_3c}
    \end{minipage}\hfill
    \begin{minipage}{0.45\textwidth}
        \centering
		\captionsetup{width=.95\textwidth}
        \includegraphics[width=0.8\textwidth]{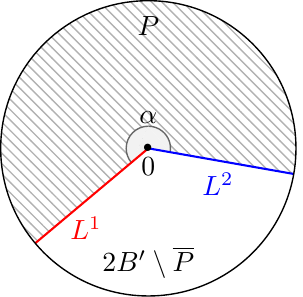}
		\caption{}
		\label{Fig_case_3d}
    \end{minipage}

\end{figure}

We now assume that $P$ is the open sector of $2B'$ with \emph{maximal} inner angle such that~\eqref{proof_thm_struct_5} holds true. We denote by $\alpha\in(0,2\pi]$ this angle and if $\alpha<2\pi$ (so that $P\ne 2B'$), $L^1$ and $L^2$ still denote its delimiting radii.\medskip

\noindent
\emph{Step 3.d. The case $\alpha> \pi$.} If $\pi<\alpha<2\pi$, then the complement of $\ov P$ in
$2B'$ is a convex sector with inner angle strictly smaller than $\pi$ and delimited by the two radii
$L^1$, $L^2$ (see Figure~\ref{Fig_case_3d}). Since by continuity $v=(u^*)^2$ on $L^1$ and $L^2$, we deduce from Step~3.c  that $v=(u^*)^2\text{ in }2B'\sm \ov P$. Hence $v=(u^*)^2$ in $2B'$ which contradicts the maximality of $P$. In conclusion, $\alpha=2\pi$ and $v=(u^*)^2$ in $2B'$. This corresponds to the case~(a) of the point~(iii) of the theorem.\medskip

\noindent
\emph{Step 3.e. The case $\alpha< \pi$.} Let us assume $\alpha<\pi$ and let us consider two sequences $(x^j_k)_{k\ge 1}\sub 2B'\sm\ov P$ for $j\in \{1,2\}$ such that $|x_k^1|=|x_k^2|=1$ for every $k$ and $x_k^j\to x^j\in L^j$ as $k\up\oo$. 

Let us assume by contradiction that $L(x_k^j)$ is of type (b)  (see
Figure~\ref{Fig_case_3e_type_b}) for $j=1$ or $j=2$. In this case, at least for $k$ large enough we
can apply Step~3.c. to the convex sector $Q_k$ generated by $x_k^j$ and $L^j$, we have $v=(u^*)^2$
in $P\cup Q_k$ and since $P$ and $Q_k$ have a common side this contradicts the maximality of $P$.
Therefore,  for $j=1$ and $j=2$, $L(x_k^j)$ is of type (a) for $k$ large enough (see
Figure~\ref{Fig_case_3e_type_a}). 

Next, by continuity of $v$ in $2B'$,  the segment $L(x_k^j)$ tends to be parallel to $L^j$ as $k\up\oo$ and since $\alpha<\pi$ we see that $L(x_k^1)\cap L(x_k^2)\ne\void$ 
for $k$ large enough and thus $L(x_k^1)= L(x_k^2)$. Passing to the limit we get $L^1=L^2$ which gives a contradiction.\medskip 

\begin{figure}[ht]
    \centering
    \begin{minipage}{0.45\textwidth}
        \centering
		\captionsetup{width=.95\textwidth}
        \includegraphics[width=0.8\textwidth]{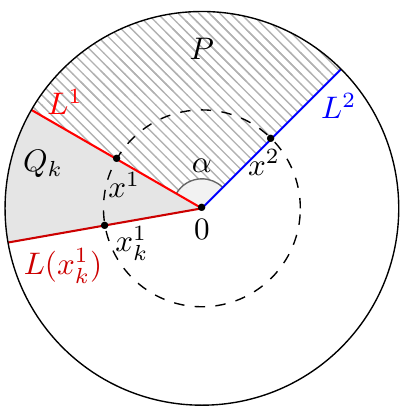}
        \caption{$L(x_k^1)$ (or $L(x_k^2)$) is of type (b). This contradicts the maximality of $P$.}
		\label{Fig_case_3e_type_b}
    \end{minipage}\hfill
    \begin{minipage}{0.45\textwidth}
        \centering
		\captionsetup{width=.95\textwidth}
        \includegraphics[width=0.8\textwidth]{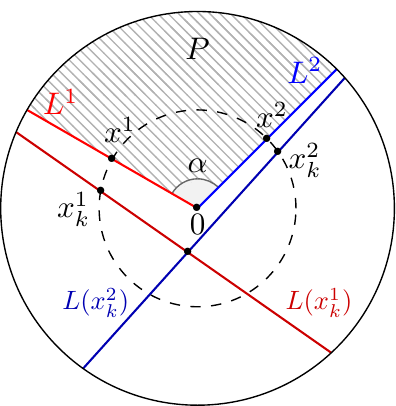}
        \caption{Both radii $L(x_k^1)$ and $L(x_k^2)$ are of type (a). This contradicts $L(x_k^1)\cap L(x_k^2)=\void$.}
		\label{Fig_case_3e_type_a}
    \end{minipage}

	\vspace{4pt}
\end{figure}
 

\noindent
\emph{Step 3.f. The case $\alpha=\pi$.} In this last case, there exists $\xi\in \So$ such that $P=\{x\in B':\bs\xi\cd x>0\}$. 
Let us establish that this corresponds to the case~(b) of the point~(iii) of the theorem. By~\eqref{proof_thm_struct_5}, we already 
know that $v=(u^*)^2$ in $P$ and we only have to prove that $v$ is  Lipschitz continuous in $\{x\in B': x\cd \bs\xi\le0\}$.

Let us set $Q:= \{x\in B: x\cd\bs\xi<0\}$. 
Let $x^1,x^2\in Q$ such that $v(x^1)\ne v(x^2)$ and $|v(x^1)-v(x^2)|<\sqrt{2}$. Then there exist $\theta_1,\theta_2\in\R$ such that $v(x^1)=e^{i\theta_1}$, $v(x^2)=e^{i\theta_2}$ and $|\theta_1-\theta_2|<\pi/2$. Since $v(x^1)\ne v(x^2)$, necessarily $L(x^1)\cap L(x^2)=\void$, and since $v(x^1)\neq \pm v(x^2)$ the lines spanned by $L(x^1)$ and $L(x^2)$ intersect outside $2B$.
Hence, the constraints on $L(x^1)$ and $L(x^2)$ are the same as those on $L_{\lambda_-}$ and $L_{\lambda_+}$ in Step~2, with $\theta_1$, $\theta_2$ in place of $\lambda_-$, $\lambda_+$.  Recalling that $\dist(L(x^1)\cap B,L(x^2)\cap B)$ is minimized in the situation of Figure~\ref{Fig_step2}, we find
\[
|x^1-x^2|\ge \dist(L(x^1)\cap B,L(x^2)\cap B) \ge 2\sin(|\theta_1-\theta_2|).
\]
This implies that $v$ is indeed Lipschitz continuous on $Q$. Eventually,  by continuity of $v$ in $B'$ we have that $v$ is Lipschitz continuous in  $\{x\in B': x\cd \bs\xi\le0\}$.
\end{proof}
\noindent
{\bf Acknowledgments.}
M. Goldman was partially supported by the ANR grant SHAPO.
B. Merlet and M. Pegon are partially supported by the Labex CEMPI (ANR-11-
LABX-0007-01).
S. Serfaty is supported by NSF grant DMS-2000205 and by the Simons Foundation through the Simons Investigator program.
\bibliographystyle{alpha}
\bibliography{curlfreeRP1.bib}


 \end{document}